\documentclass[11pt,a4paper]{article}
\usepackage[T1]{fontenc}
\usepackage[utf8]{inputenc}
\usepackage{authblk}
\usepackage{amssymb, amsmath}
\usepackage{amsthm, amsfonts,mathrsfs}
\usepackage[T1]{fontenc}
\usepackage{times}
\usepackage{graphicx}
\usepackage{subfigure}
\usepackage{cite}
\usepackage{hyperref}
\usepackage{epsfig}
\usepackage{color}
\usepackage[all]{xypic}
\hypersetup{colorlinks=true,citecolor=blue,linkcolor=blue}
\newtheorem{thm}{Theorem}[section]
\newtheorem{coro}[thm]{Corollary}

\newtheorem{lemma}[thm]{Lemma}
\newtheorem{prop}[thm]{Proposition}
\newtheorem{remark}[thm]{Remark}

\numberwithin{equation}{section}
\newcommand{\ep}{\varepsilon}
\newcommand{\al}{\alpha}

\newcommand{\ue}{U^\varepsilon}
\newcommand{\bbue}{\mathbb{U}^\varepsilon}
\newcommand{\bfue}{\mathbf{U}^\varepsilon}
\newcommand{\ee}{\eta^\varepsilon}
\newcommand{\E}{\mathbb{E}}
\newcommand{\F}{\mathcal{F}}

\newcommand{\bbG}{\mathbb{G}}
\newcommand{\vp}{\varphi}
\newcommand{\pe}{\phi^\varepsilon}
\newcommand{\xe}{X^\varepsilon}
\newcommand{\ye}{Y^\varepsilon}
\newcommand{\ze}{Z^\varepsilon}
\newcommand{\we}{W^\varepsilon}
\newcommand{\ve}{V^\varepsilon}
\newcommand{\me}{M^\varepsilon}
\newcommand{\ke}{K^\varepsilon}
\newcommand{\Ae}{A^\varepsilon}%\ae is used by system
\newcommand{\Be}{B^\varepsilon}%\be is used by myself
\newcommand{\intot}{\int_0^t}
\newcommand{\intos}{\int_0^s}
\newcommand{\supt}{\sup\limits_{0\leq t\leq T}}
\newcommand{\supe}{\sup\limits_{0<\varepsilon\leq 1}}

\newcommand{\bbxe}{\mathbb{X}^{\varepsilon}}
\newcommand{\bfxe}{\mathbf{X}^{\varepsilon}}
\newcommand{\epi}{\varepsilon^{-1}}
\title{\textbf{Approximation of Random Differential Equations Driven by Physical Brownian Motion with Fast Oscillating Noise}\thanks{This work is supported by NSFC Grant No.  12371243.}}

\author[1]{Qingming Zhao\thanks{qingming.zhao@smail.nju.edu.cn}}

\author[2]{Xueru Liu\thanks{327625941@qq.com}}

\author[1]{Wei Wang\thanks{Corresponding author: wangweinju@nju.edu.cn}}

\affil[1]{School of Mathematics, Nanjing University, Nanjing 210093, P. R. China}
\affil[2]{School of Mathematical Sciences, Dalian University of Technology, 116024 Dalian, P. R. China}

\date{} 

\begin{document}
	\maketitle

	\noindent{\small{\hspace{1.1cm} }}

	\noindent \textbf{Abstract~~~}{We investigate approximation of random differential equations driven by semimartingales satisfying a singularly perturbed Langevin equation with scaled mixing random force. By a diffusion approximation approach, we explore the limit of the rough path lift of this semimartingale, and a universal limit theorem is applied to identify the limit of random differential equation. A structurally parallel proof also applies to establish an iterated weak invariance principle for the mixing random force, which is itself an independent interesting result. We find that, the limit of both of the second-level processes, have the form of iterated integral of Stratonovich form plus an anti-symmetric part which is proportional to the time increment.   }   
	\\[2mm]
	\textbf{Keywords~~~} {Diffusion approximation}, {Iterated weak invariance principle}, {Smoluchowski--Kramers approximation}, {Singular perturbation}, {Rough path theory}
	\\[2mm]
	\\
	\textbf{2020 Mathematics Subject Classification~~~}60L90, 60H10

	\section{Introduction}
	On a complete probability space $(\Omega,\F,P),$ we consider the approximation of the following stochastic differential equations when $\ep\to 0$
	\begin{equation}\label{prelimit}
		d\ze_t=b(\ze_t)dt+\sigma(\ze_t)d\xe_t, \quad\ze_0=\xi,
	\end{equation}
	where $\xe$ itself is the solution of the singularly perturbed Langevin equation,
	\begin{equation}\label{SKx}
		\ep\ddot{X}^\ep+\dot{X}^\ep=F(\xe)+\dot{U}^\ep,\quad \xe_0=0,\dot{X}^\ep_0=0,
	\end{equation}
	or equivalently,
	\begin{equation}\label{SKxv}
		\begin{cases}
			\dot{X}^\ep=\ve,\enspace\xe_0=0,\\
			\dot{V}^\ep=-\frac{1}{\ep}\ve+\frac{1}{\ep}F(\xe)+\frac{1}{\ep}\dot{U}^\ep,\enspace\ve_0=0.
		\end{cases}
	\end{equation}
	Here, $\ue_t=\frac{1}{\sqrt{\ep}}\intot\ee(s)ds$, $\ee(s):=\eta(\frac{s}{\ep})$, and $\eta$ is a stationary process, having trajectories in $C^1(\mathbb{R};\mathbb{R}^d)$ a.e.. The detailed assumptions of $\eta$ are presented in Section \ref{section-preli}. As already discussed in \cite{LW23}, $\xe\to X$ in $C([0,T];\mathbb{R}^d)$ weakly for some continuous random process, and heuristically, there should be $\ze\to Z$ with $\xe\to X$ with
	$$dZ_t=b(Z_t)dt+\sigma(Z_t)dX_t,\quad Z_0=\xi.$$  However, this is in general incorrect, as one sees in the classical Wong--Zakai approximation: the sequence of It\^o SDE driven by piecewise linear approximation $B^n$ of a Brownian motion $B$ converges to the Stratonovich SDE driven by $B$. This is explained, in the rough path framework, by the fact that the It\^o lift of $B^n$ converge to the Stratonovich lift of $B$. It is possible to investigate approximation of \eqref{prelimit} only using traditional stochastic analysis \cite[e.g.]{IW89}, but a rather heavy computation is involved. The universal limit theorem (also known as continuity of It\^o--Lyons map) in rough path framework indicates that, in order to investigate the limit of \eqref{prelimit}, it is sufficient to consider the lift of $\xe$ in the rough path metric. Recently there is fruitful work on approximating SDE in rough path framework, see for instance in  \cite{KM16,CFK+16, EFO24, FK24}.
	
	 Equations \eqref{SKxv} implies that the trajectory of $\xe$ is of bounded variation. Therefore, the integral $$\bbue_{s,t}:=\int_s^t\ue_{s,r}\otimes d\ue_r,\enspace \mathbb{X}^\ep_{s,t}:=\int_s^t\xe_{s,r}\otimes d\xe_r,\quad 0\leq s\leq t<\infty,$$ are both well-defined in the sense of Riemann--Stieltjes. Setting $\bfue:=(\ue,\bbue),\quad\bfxe:=(\xe,\bbxe).$ $\bfue$ and $\bfxe$ are known as canonical lifts of $\ue$ and $\xe$, respectively. 
	 
	 We explore the limit $\mathbf{U}$ of $\bfue$ first. The approximation of the first-level process $\ue$ has been studied in \cite{DW14,SKO89}. But to our best knowledge, there is no work considering the limit of the lifted process $\bfue$. We show that, under some mixing condition on $\eta$, $\bfue$ converges weakly to a Brownian rough paths with some parameters. This is called an iterated weak invariance principle (iterated WIP), which draw more and more attention in recent years \cite[e.g.]{KM16}. Logically, one cannot derive the limit $\mathbf{X}$ of $\bfxe$ by a direct use of universal limit theorem together with the convergence result of $\bfue$, since $(\xe,\ve)$ is a system with degenerate noise $(0,\frac{1}{\ep}\ue)$, the second component of which blows up when $\ep\to0$, as seen in \eqref{SKxv}. The derivation of $\mathbf{X}$ shares some structural parallelism to that of $\mathbf{U}$, but the former one is more subtle.
	 
	   Next we consider the limit of $\bfxe$. There has been numerous work on the approximation of the first-level process $\xe$. In \cite{WR15, SW21, LW23}, approximation of singularly perturbed Langevin equation, stochastic wave equations and stochastic Burgers equations are considered respectively, all of which are driven by fast oscillating random force. This regime of limit is also known as Smoluchowski--Kramers (SK) approximation. For SK approximation with Brownian noise, we refer to \cite{HMV15, CX22, SWW24}. However, for the second-level process, there seems few reference. In \cite{FGL15}, they considered the linear equations describing physical Brownian motion in magnetic fields . In our paper, we do not assume the driving random force in $\xe$ is Gaussian, but only make some mixing condition hypothesis. We show that, as $\ep\to 0$, the second-level process $\bbxe$ converges to  the Stratonovich iterated of the limit equation, plus an additional purely deterministic anti-symmetric part involving the covariance of $\eta$. To this aim, tightness of $(\bfxe)_\ep$ is needed, which follows from Kolmogorov criterion for rough paths. This requires us to give $p$-th moment estimate for $\xe$ and $\bbxe$ for sufficiently large $p$. It seems that there is no existing literature showing this, and we prove it by a Gordin martingale coboundary decomposition approach. 
	 
	 Our article is organized as follows. In Section \ref{section-preli}, we recall basic rough path theory, impose several assumptions and state main theorems. In Section \ref{section-moment}, we give moment estimate, from which the desired tightness is obtained. Section \ref{section-limitofU} devotes to establishing Theorem \ref{main1} by an approach of diffusion approximation. By a more subtle analysis, we prove Theorem \ref{main2} in Section \ref{section-limitofX}, and an application of universal limit theorem yields Theorem \ref{main3}.
	
	\section{Preliminaries and Main Results}\label{section-preli}
	We first recall basic notions in rough path theory \cite[e.g.]{FH20}. Let $\mathcal{C}^\al([0,T];\mathbb{R}^d)$ be the space of $\mathbb{R}^d$-valued functions on $[0,T]$ such that $\|X\|_{\al-\text{H\"ol}}:=\sup\limits_{0\leq s< t\leq T}\frac{|X_{s,t}|}{|t-s|^\al}<\infty$, where $X_{s,t}:=X_t-X_s$. Setting  $$\Delta_T:=\{(s,t)\in \mathbb{R}^2:0\leq s\leq t\leq T\},$$ the space $\mathcal{C}^{2\al}_2(\Delta_T;\mathbb{R}^{d\times d})$ is defined by a set consisting of functions $A:\Delta_T\to\mathbb{R}^{d\times d}$ such that $$\|A\|_{2\al-\text{H\"ol}}:=\sup_{0\leq s<t\leq T}\frac{|A_{s,t}|}{|t-s|^{2\al}}<\infty.$$ The $\al$-H\"older rough path space over $\mathbb{R}^d$, denoted by $\mathscr{C}^\al([0,T];\mathbb{R}^d)$, is the set consisting of elements $\mathbf{X}=(X,\mathbb{X})$ in $\mathcal{C}^\al([0,T];\mathbb{R}^d)\times \mathcal{C}^{2\al}_2(\Delta_T;\mathbb{R}^{d\times d})$ such that 
	\begin{equation}\label{chen}
		\mathbb{X}_{s,t}=\mathbb{X}_{s,u}+\mathbb{X}_{u,t}+X_{s,u}\otimes X_{u,t},\quad 0\leq s\leq u\leq t\leq T,
	\end{equation}
	 endowed with the rough path metric
	 \begin{equation}
	 	\rho_\al(\mathbf{X},\mathbf{Y}):=\sup_{0\leq s\leq t\leq T}\frac{|X_{s,t}-Y_{s,t}|}{|t-s|^\al}+\sup_{(s,t)\in\Delta_T}\frac{|\mathbb{X}_{s,t}-\mathbb{Y}_{s,t}|}{|t-s|^{2\al}},\quad \mathbf{X},\mathbf{Y}\in\mathscr{C}^{\al}([0,T];\mathbb{R}^d).
	 \end{equation}
	 The non-linear equation \eqref{chen} is known as Chen relation. 
	 
	  A classical fact \cite[Sec 5.3]{FV10} is that $\mathcal{C}^\al([0,T];\mathbb{R}^d)$ is not separable, but $\mathcal{C}^{0,\al}([0,T];\mathbb{R}^d)$, the closure of smooth path space in the $\al$-H\"older space, is Polish. Similarly, a drawback of rough path space is the absence of separability, preventing the discussion of tightness. To fix this, introduce the space of smooth rough path space $\mathscr{C}^\infty([0,T];\mathbb{R}^d)$\cite[Page 17]{FH20}: $\mathbf{X}=(X,\mathbb{X})\in\mathscr{C}^\infty$ if and only if $\mathbf{X}\in\mathscr{C}^\al$, $X\in C^\infty$ and $\mathbb{X}_{s,\cdot}\in C^\infty$ for each base-point $s\in [0,T].$ Consider the space $\mathscr{C}^{0,\al}([0,T];\mathbb{R}^d)$, defined by the closure of $\mathscr{C}^\infty([0,T];\mathbb{R}^d)$ in $\mathscr{C}^\al([0,T];\mathbb{R}^d)$ with respect to the rough path metric $\rho_\al$. It is known that \cite[Ex 2.8]{FH20} $\mathscr{C}^{0,\al}([0,T];\mathbb{R}^d)$ is a Polish space.
	 
	 We also recall the definition of rough differential equations (RDE) \cite[Sec 8.7]{FH20}.    Let $b\in C^3_b(\mathbb{R}^e;\mathbb{R}^e)$, $\sigma\in C^3_b(\mathbb{R}^e;\mathbb{R}^{e\times d})$, the space of functions having bounded continuous derivative of $k$-orders, $k=1,2,3$.  We say $Y$ with $Y_0=\xi$ is a solution of 
	 \begin{equation*}
	 	dY_t=b(Y_t)dt+\sigma(Y_t)d\mathbf{X}_t,\quad Y_0=\xi,
	 \end{equation*}
	 if
	 $$Y_t-Y_s=b(Y_s)(t-s)+\sigma(Y_s)X_{s,t}+D\sigma(Y_s)\sigma(Y_s)\mathbb{X}_{s,t}+R_{s,t}, \quad (s,t)\in\Delta_{[0,T]},$$
	 and $$\lim\limits_{|\mathcal{P}|\to 0}\sup\limits_{\mathcal{P}\subset [0,T]}\sum\limits_{[s,t]\in\mathcal{P}}|R_{s,t}|=0,$$ where $\mathcal{P}\subset[0,T]$ means that $\mathcal{P}$ is a partition of $[0,T]$ with mesh $|\mathcal{P}|$. One of the most important properties of RDE is that the It\^o--Lyons map is locally Lipschitz with respect to $\rho_\al$ \cite[Theorem 8.5]{FH20}.	We refer to \cite{LCL07,LQ02,FH20,FV10} for systematic study of rough path theory.

	 Now we turn to the setup of our questions. Denote $\F_s^t:=\sigma(\eta(r):s\leq r<t)$, $-\infty< s\leq t\leq\infty.$ We write $\mathcal{F}^t:=\F_{-\infty}^t,\mathbb{F}:=(\F^t)_t, \bbG^\ep:=(\F^{t/\ep})_t$, $0<\ep\leq 1.$ Set $m(t):=\sup\limits_{0\leq s<\infty}\alpha(\mathcal{F}_{-\infty}^s,\mathcal{F}_{s+t}^\infty)$, here $\alpha$ is the strong mixing coefficient, that is, $$\alpha(\mathcal{A},\mathcal{B}):=\sup\limits_{A\in\mathcal{A},B\in\mathcal{B}}|P(AB)-P(A)P(B)|.$$
	 
	 We impose following conditions:
	
	\noindent($\mathbf{A}_1$) $\E\eta(t)=0$ and $\E|\eta(t)|^p\leq C_p$ for each $t\in\mathbb{R}$ and $p\geq 1.$
	
	\noindent($\mathbf{A}_2$) There is a constant $C$ such that for all $t\geq0$, $m(t)\leq Ce^{-Ct}$.
	
	\noindent($\mathbf{A}_3$) $F:\mathbb{R}^d\to\mathbb{R}^d$ is globally Lipschitz and bounded.

Set $R(t):=\E\eta(0)\otimes\eta(t)$, and  $$A^{ij}:=\int_0^\infty R^{ij}(s)ds=\int_0^\infty \mathbb{E}[\eta^i(0)\eta^j(s)]ds,\enspace 1\leq i,j\leq d.$$ The integral is well-defined. Indeed, by Davydov inequality (see Proposition \ref{prop-davydov} below) and ($\mathbf{A}_1$),
$$|\mathbb{E}[\eta^i(0)\eta^j(s)]|=|\text{Cov}(\eta^i(0),\eta^j(s))|\lesssim\al(\F^0,\F_s^\infty)^{1/4}\|\eta^i(0)\|_2\|\eta^j(s)\|_4\lesssim m(s)^{1/4},$$ so by ($\mathbf{A}_2$),
$$A^{ij}\leq\int_0^\infty m(s)^{1/4}ds<\infty.$$	

Now we states our main results. The first theorem establishes an iterated WIP. 
\begin{thm}\label{main1}
	Let $S\in\mathbb{R}^{d\times d}$ be the matrix with $(i,j)$-entry $S^{ij}:=A^{ij}+A^{ji}.$
	Under assumptions ($\mathbf{A}_1$)--($\mathbf{A}_2$), $\bfue\to\mathbf{U}$ in $\mathscr{C}^{\al}([0,T];\mathbb{R}^d)$ weakly as $\ep\to 0$, where $\mathbf{U}=(U,\mathbb{U})$,  $$U_t=S^{1/2}B_t,\quad \mathbb{U}_{s,t}=\int_s^t U_{s,r}\otimes\circ dU_r+\frac{A-A^\top}{2}(t-s).$$
\end{thm}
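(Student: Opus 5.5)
The plan is the standard two-step route: tightness of $(\bfue)_\ep$ in $\mathscr{C}^{\al}$ for $\al<1/2$ (really in $\mathscr{C}^{0,\al'}$ for some $\al<\al'<1/2$, so that we work in a Polish space), followed by identification of the finite-dimensional limits of $(\ue,\bbue)$.

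\textbf{Tightness.} I will use the Kolmogorov criterion for rough paths. It suffices to show, for every $p$ large,
$$\E|\ue_{s,t}|^{p}\lesssim |t-s|^{p/2},\qquad \E|\bbue_{s,t}|^{p/2}\lesssim |t-s|^{p/2},$$
uniformly in $\ep$. To get these bounds I will use a Gordin martingale–coboundary decomposition. Set
$$\chi(t):=\int_0^\infty \E[\eta(t+r)\mid\F^t]\,dr .$$
By ($\mathbf{A}_1$)--($\mathbf{A}_2$) and the Davydov inequality (Proposition~\ref{prop-davydov}), $\chi(t)$ has all moments. Moreover,
$$\int_s^t\eta(r)\,dr=M_{s,t}+\chi(s)-\chi(t),$$
where $M$ is an $\mathbb F$-martingale. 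After rescaling, $\ue_{s,t}=\sqrt\ep\,M_{s/\ep,t/\ep}+\sqrt\ep\,(\chi(s/\ep)-\chi(t/\ep))$. The BDG inequality gives the martingale part, and the coboundary term is at most $\sqrt\ep$ times bounded moments.

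The coboundary term is only good when $|t-s|\ge\ep$. For $|t-s|<\ep$ I will instead use the direct bound $|\ue_{s,t}|\le \ep^{-1/2}\int_s^t|\ee|$, whose moments are $\lesssim \ep^{-p/2}|t-s|^p\le|t-s|^{p/2}$.

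For $\bbue$ I will substitute the same decomposition into $\int_s^t\ue_{s,r}\otimes\ee(r)\,dr/\sqrt\ep$. This produces a martingale integral, handled by BDG, plus boundary terms. The boundary terms are handled by integrating $\ue_{s,r}\otimes d\chi$ by parts, which yields a term $\ep^{-1}\!\int_s^t\ee(r)\otimes\chi(r/\ep)\,dr$. Its mean is $\ep^{-1}(t-s)\,\E[\eta\otimes\chi]$, so the centred version has to be controlled again by a Gordin decomposition, applied to the mixing process $\eta\otimes\chi$. This step is the main obstacle: the centring and the resulting $|t-s|$-scaling must hold simultaneously at both scales $|t-s|\lessgtr\ep$.

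\textbf{Identification of the limit.} I will take the diffusion-approximation (perturbed test function) approach. Consider the pair $(\ue_t,\bbue_{0,t})$. For a test function $f$, I will look for a martingale problem with generator
$$\mathcal L f=\tfrac12 S^{ij}\partial_{u_iu_j}f+\tfrac12 S^{ij}u_i\partial_{u_j}\partial_{\mathbb u_{\cdot}}f+\dots+A^{ij}\partial_{\mathbb u_{ij}}f .$$
Concretely, $f(\ue_t,\bbue_{0,t})$ plus correctors of order $\sqrt\ep$ and $\ep$ is built from $\chi$. The corrector of order $\ep$ produces the drift $A^{ij}$ in the $\mathbb U^{ij}$ direction, because $\E[\eta^i(0)\int_0^\infty\eta^j(r)\,dr]=A^{ij}$. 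The mixing bounds make the remainders vanish in $L^1$. Hence every limit point $(U,\mathbb U_{0,\cdot})$ solves the martingale problem in which $U=S^{1/2}B$ and
$$d\mathbb U_{0,t}=U_{0,t}\otimes dU_t+A\,dt$$
in the It\^o sense.

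Rewriting in Stratonovich form, $U\otimes dU=U\otimes\circ dU-\tfrac12 S\,dt$, and $A-\tfrac12 S=\tfrac12(A-A^\top)$. This gives $\mathbb U_{0,t}$ as stated. The general $\mathbb U_{s,t}$ then follows from Chen's relation \eqref{chen}, which is preserved under the limit. Uniqueness of the martingale problem, since it is a linear SDE, together with tightness gives weak convergence of $\bfue$ to $\mathbf U$ in $\mathscr{C}^{\al}$.
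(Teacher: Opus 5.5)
\paragraph{Identification.} This step is essentially the paper's. In your notation the paper's first corrector is $Y^\varepsilon_1(t)=\sqrt{\varepsilon}\,\Phi(\ue_t,\bbue_t,\chi(t/\varepsilon))$, since $\int_t^\infty\E(\eta(s/\varepsilon)|\F^{t/\varepsilon})\,ds=\varepsilon\chi(t/\varepsilon)$. Its second corrector is built from $C^{\varepsilon,ij}_s-A^{ij}=\eta^i(s/\varepsilon)\chi^j(s/\varepsilon)-A^{ij}$, and the limiting generator $\sum_{i,j}A^{ij}\Theta^{ij}$ is the one you describe. Your explicit appeal to well-posedness of this affine martingale problem, which gives convergence of the whole family, is a point the paper leaves implicit.

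\paragraph{Tightness: a different route.} Here you genuinely differ from the paper. The paper discretizes into unit blocks $Y_m=\int_m^{m+1}\xi(u)du$ and uses Yokoyama's inequality for the first level. For the second level it uses a discrete Gordin decomposition $Y_m=d_m+Z_m-Z_{m+1}$, expanding $\sum_{m<n}Y_m\otimes Y_n$ into a martingale transform plus coboundary terms; each is bounded by H\"older, stationarity and discrete BDG. Your continuous-time corrector gives instead
\[
\bbue_{s,t}=\sqrt{\varepsilon}\int_s^t\ue_{s,r}\otimes dM_{r/\varepsilon}-\sqrt{\varepsilon}\,\ue_{s,t}\otimes\chi(t/\varepsilon)+\int_s^t\ee(r)\otimes\chi(r/\varepsilon)\,dr .
\]
This is shorter and needs no Yokoyama inequality. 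It also exhibits the drift $(t-s)\E[\eta(0)\otimes\chi(0)]=(t-s)A$ already at the tightness stage, linking it to the identification. The paper's discrete route buys freedom from continuous-time martingale technicalities.

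\paragraph{A scaling slip.} Your integration by parts is off by a factor: the last term carries the factor $1$, not $\varepsilon^{-1}$, because the $\sqrt{\varepsilon}$ in front of $d\chi(r/\varepsilon)$ cancels the $\varepsilon^{-1/2}$ in $d\ue_r$. Taken literally, a mean of order $\varepsilon^{-1}(t-s)$ could not be removed by centring, so the step as written would not close. With the correct factor, Minkowski and the uniform moments of the stationary processes $\eta,\chi$ give $\|\int_s^t\ee(r)\otimes\chi(r/\varepsilon)dr\|_p\le C(t-s)$ directly. Your ``main obstacle'' therefore disappears, and no second Gordin decomposition is needed. The middle term is $\lesssim\sqrt{\varepsilon(t-s)}\le t-s$ when $\varepsilon\le t-s$.

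\paragraph{What ``handled by BDG'' requires.} You need a bit more than you state.
\begin{itemize}
\item Take a c\`adl\`ag version of $M$, equivalently of $\chi$.
\item Bound its bracket: $\E[M]_{a,b}^{p}\lesssim\E|M_{a,b}|^{2p}\lesssim(b-a)^p$ for $b-a\ge1$. This follows from lower BDG and Doob, then discrete BDG on the unit increments $M_{k,k+1}=\int_k^{k+1}\eta+\chi(k+1)-\chi(k)$, which have all moments. The same skeleton argument is what actually yields your first-level bound.
\item Get $\E\sup_{r\in[s,t]}|\ue_{s,r}|^{2p}\lesssim(t-s)^p$ from the first-level bounds via Kolmogorov's lemma.
\end{itemize}
Cauchy--Schwarz on $\E\bigl(\int_s^t|\ue_{s,r}|^2\,\varepsilon\,d[M]_{r/\varepsilon}\bigr)^{p/2}$ then gives $(t-s)^p$ for $t-s\ge\varepsilon$. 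For $t-s<\varepsilon$, the crude bound $\|\bbue_{s,t}\|_p\lesssim\varepsilon^{-1}(t-s)^2\le t-s$ suffices.
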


The next theorem gives a Smoluchowski--Kramers approximation of the rough path lift.
\begin{thm}\label{main2}
  Set $\Gamma^{ij}:=\int_0^\infty\int_0^\infty e^{-u}e^{-v}R^{ij}(u-v)dudv$, $\Delta^{ij}:=\int_0^\infty\int_0^\infty e^{-u}R^{ij}(u+v)dudv$, and $\bar{\Gamma}^{ij}:=\Gamma^{ij}+\Delta^{ij}$. Let $\bar{\Gamma}\in\mathbb{R}^{d\times d}$ be the matrix with $(i,j)$-entry $\bar{\Gamma}^{ij}$. Under assumptions ($\mathbf{A}_1$)--($\mathbf{A}_2$), $\mathbf{X}^\ep\to \mathbf{X}$ in $\mathscr{C}^\al([0,T];\mathbb{R}^d)$ weakly, where $\mathbf{X}=(X,\mathbb{X})$, $X_0=0$ and
  \begin{equation}\label{limitofX}
  	dX_t=F(X_t)dt+dU_t,\quad\mathbb{X}_{s,t}=\int_s^t X_{s,r}\otimes\circ dX_r+\frac{\bar{\Gamma}-\bar{\Gamma}^\top}{2}(t-s).
  \end{equation}
\end{thm}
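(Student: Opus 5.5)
The plan is to follow the same two-stage scheme as for Theorem \ref{main1}: first tightness of $(\bfxe)_\ep$ in $\mathscr{C}^{0,\al}$, then identification of every limit point through a diffusion-approximation argument for the second level. For tightness we use the Kolmogorov criterion for rough paths. It suffices to show
$$\E|\xe_{s,t}|^p\lesssim|t-s|^{p/2},\qquad \E|\bbxe_{s,t}|^{p/2}\lesssim|t-s|^{p/2}$$
uniformly in $\ep$ and for large $p$. To get these bounds, solve the linear velocity equation explicitly:
$$\ve_t=\frac1\ep\int_0^t e^{-(t-r)/\ep}\big(F(\xe_r)dr+d\ue_r\big).$$
Integrating once gives
$$\xe_t=\int_0^t F(\xe_r)dr+\ue_t-\ep\ve_t ,$$
so the first level reduces to the moment bounds for $\ue$ and to bounds on $\ep\ve$. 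The latter is an exponentially smoothed version of $\sqrt\ep\,\eta(\cdot/\ep)$. For the second level, substitute this decomposition into $\int_s^t\xe_{s,r}\otimes\ve_r dr$. The term $\int_s^t \ue_{s,r}\otimes d\ue_r$ is controlled by the Gordin martingale-coboundary estimates used for $\bbue$. The cross terms involving $\ep\ve$ are handled with Davydov's inequality and ($\mathbf{A}_2$). Small increments $|t-s|<\ep$ are treated separately, using the bounded-variation bound $|\ve|\lesssim \ep^{-1/2}$ in moments.

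For identification, Theorem 1 of \cite{LW23} gives $\xe\Rightarrow X$ with $dX=F(X)dt+dU$. We then write
$$\bbxe_{s,t}=\int_s^t\xe_{s,r}\otimes\ve_r\,dr .$$
We introduce the fast process $\zeta^\ep:=\sqrt\ep\,\ve$, which solves
$$d\zeta^\ep=-\epi\zeta^\ep dt+\ep^{-1/2}F\,dt+\ep^{-1}\ee dt,$$
and view the triple $(\xe,\bbxe_{0,\cdot},\zeta^\ep,\ee)$ as a slow/fast system. We use perturbed test functions in the martingale-problem formulation, as in the proof of Theorem \ref{main1}. For a test function $\varphi(x,y)$ of $(X,\mathbb{X}_{0,\cdot})$, we add correctors built from $\int_t^\infty\E[\cdot\mid\bbG^\ep_t]$ of the fast terms, which exist by strong mixing. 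In the limit this yields a generator whose drift for $\mathbb{X}$ has the form $x\otimes F(x)+C$. Here $C$ is the constant coming from the averaged product $\E[\zeta\otimes(\ee+F)]$ of velocity and forcing. Computing it with the stationary law of the Ornstein--Uhlenbeck-type filter $\zeta$ driven by $\eta$ should produce exactly $\Gamma+\Delta$:
\begin{itemize}
\item[] $\Gamma$ comes from the double exponential kernel $e^{-u}e^{-v}R(u-v)$, which is the covariance of the filtered velocity;
\item[] $\Delta$ comes from the correlation of the filtered velocity with future noise, giving the kernel $e^{-u}R(u+v)$.
\end{itemize}
We then convert from It\^o to Stratonovich form. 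The symmetric part of $\bar\Gamma$ must agree with $S/2$, since $\mathrm{Sym}(\mathbb{X}_{s,t})=\frac12 X_{s,t}\otimes X_{s,t}$ holds automatically for geometric lifts. This leaves the anti-symmetric correction $\frac{\bar\Gamma-\bar\Gamma^\top}{2}(t-s)$. Chen's relation then extends the formula from base point $0$ to general $(s,t)$. Uniqueness of the martingale problem for the limit system, which is an SDE with Lipschitz coefficients, identifies the law and completes the weak convergence.

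The main obstacle is the second-level moment estimate uniform in $\ep$. The integrand $\ve$ has size $\ep^{-1/2}$, so naive bounds blow up. The cancellation has to come from the martingale-coboundary decomposition applied to the filtered noise, with the coboundary remainders kept $O(\ep^{1/2})$ in $L^p$. I would first try Gordin's decomposition $\eta=m+(g-g\circ\theta)$ at the discrete scale $\ep$. After that, I would apply Burkholder's inequality to the martingale part of $\int\xe\otimes dM$, and summation by parts to the coboundary part.
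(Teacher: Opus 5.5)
Your overall architecture matches the paper's. Tightness comes from uniform moment bounds plus the Kolmogorov criterion. Limit points are then identified by perturbed test functions and a martingale problem for $(X,\mathbb{X}_{0,\cdot})$ whose $\mathbb{X}$-drift is $x\otimes F(x)+\bar\Gamma$.

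\textbf{The tightness estimate is where you genuinely differ.} The paper splits $\sqrt{\ep}V^\ep=Z^\ep+R^\ep$ and replaces the filtered noise by the stationary filter $\xi^*(v)=\int_{-\infty}^v e^{-(v-w)}\eta(w)\,dw$. It then applies Proposition \ref{prop-momentestimateformixing} (Yokoyama's bound, Gordin decomposition, BDG) to $\xi^*$ itself. Your identity $X^\ep_t=\int_0^tF(X^\ep_r)\,dr+U^\ep_t-\ep V^\ep_t$, together with $\|\ep V^\ep_t\|_p\lesssim\sqrt{\ep}$ from Proposition \ref{prop-momentestimateforv}, reduces the second level to $\mathbb{U}^\ep_{s,t}$, i.e.\ to the mixing estimate for $\eta$ alone. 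For $\ep\le t-s$ every cross term is $O(t-s)$ in $L^p$ by H\"older alone. The one exception is $\int_s^tU^\ep_{s,r}\otimes\ep\,dV^\ep_r$, which you must integrate by parts, since the naive bound is $(t-s)^{3/2}\ep^{-1/2}$.

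This route is simpler. It needs neither Davydov nor any martingale--coboundary argument beyond the one already used for $\mathbb{U}^\ep$, so the worry in your last paragraph is unnecessary. It also avoids applying $\alpha$-mixing moment inequalities to $\xi^*$, which depends on the whole past of $\eta$ and is not obviously mixing in the sense of Lemma \ref{lemma-mixingsum}.

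Your closing step is a valid substitute for the paper's appeal to martingale representation. Geometricity of the limit forces $\mathrm{Sym}\,\bar\Gamma=S/2$, so only the antisymmetric part survives in Stratonovich form, and you then invoke uniqueness for the limit martingale problem. One can also check directly that $2\Gamma+\Delta+\Delta^\top=\int_0^\infty(R(w)+R(w)^\top)\,dw=S$.

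\textbf{One assertion in your identification is wrong and would give the wrong constant if taken literally.} $C$ is not the equal-time velocity--forcing product $\E[\zeta^\ep\otimes(\eta^\ep+F)]$. That product tends to $\int_0^\infty e^{-u}R(u)\,du$, whose symmetric part is $\Gamma$, which in general differs from $S/2$. The correct constant is $\lim\E[W^\ep\otimes(W^\ep+D^\ep)]$, where $W^\ep=\zeta^\ep$ and $D^\ep_t=\ep^{-1}\int_t^\infty\E(\eta^\ep_s|\F^{t/\ep})\,ds$. Equivalently, it is the Kubo integral $\int_0^\infty\E[\xi^*(0)\otimes\xi^*(\tau)]\,d\tau$ of the filtered velocity. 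This is what your two bullets actually describe.

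Relatedly, you cannot build the corrector directly from $\ep^{-1/2}\Phi(\mathbf{X}^\ep_t,\zeta^\ep_s)$. The conditional mean $\E[\zeta^\ep_s|\F^{t/\ep}]$ does not decay in $s$ (it is of order $\sqrt{\ep}F(X^\ep)$), so $\int_t^\infty\E[\zeta^\ep_s|\F^{t/\ep}]\,ds$ diverges. The paper handles this in three steps:
\begin{itemize}
\item It substitutes $W^\ep=\eta^\ep+\sqrt{\ep}F(X^\ep)-\ep\dot W^\ep$ and integrates by parts, so the singular term involves only $\eta^\ep$. The corrector $Y^\ep_3$ then produces $\Theta^{ij}W^{\ep,i}D^{\ep,j}$.
\item A second corrector $Y^\ep_4$ replaces $W^{\ep,i}W^{\ep,j}+W^{\ep,i}D^{\ep,j}$ by its mean.
\item Lemma \ref{lemma-bargamma} shows that this mean converges to $\bar\Gamma$.
\end{itemize}
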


An application of universal limit theorem yields the following result.
\begin{thm}\label{main3}
	As $\ep\to0$, the solution of \eqref{prelimit} $Z^\ep\to Z$ weakly in $\mathcal{C}^\al([0,T];\mathbb{R}^d)$ with $Z_0=\xi$ and 
	$$dZ^i_t=[b^i(Z_t)+\sum_{k,l,j}\partial_k\sigma_{ij}(Z_t)\sigma_{kl}(Z_t)G^{lj}+\sum_j\sigma_{ij}(Z_t)F^j(X_t)]dt+\sum_{j,k} S^{1/2}_{jk}\sigma_{ij}(Z_t)dB^k_t,$$
	where $G^{lj}:=\frac{1}{2}(\sum\limits_k S^{1/2}_{lk}S^{1/2}_{jk}+\bar{\Gamma}_{lj}-\bar{\Gamma}_{jl})$, and $S^{1/2}_{mn}$ is the $(m,n)$-component of $S^{1/2}$.
\end{thm}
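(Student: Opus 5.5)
We deduce Theorem \ref{main3} from Theorem \ref{main2} through the continuity of the It\^o--Lyons map, and then rewrite the resulting rough differential equation as an It\^o SDE. We argue in four steps.

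\emph{Step 1: identification of the pre-limit equation as an RDE.} For each fixed $\ep$, the path $\xe$ is $C^1$ by \eqref{SKxv}, and $\bbxe$ is its Riemann--Stieltjes lift. Hence the classical solution $\ze$ of \eqref{prelimit} coincides with the RDE solution driven by the canonical lift $\bfxe$; this is the standard consistency of RDEs with ODEs driven by smooth paths \cite[e.g.]{FH20}. Thus $\ze=\Phi(\xi,\bfxe)$, where $\Phi$ denotes the It\^o--Lyons map. By \cite[Theorem 8.5]{FH20}, $\Phi$ is locally Lipschitz, in particular continuous, from $\mathscr{C}^\al$ into $\mathcal{C}^\al$ for $\al\in(1/3,1/2)$, since $b,\sigma\in C^3_b$.

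\emph{Step 2: passage to the limit.} Theorem \ref{main2} gives $\bfxe\to\mathbf{X}$ weakly in $\mathscr{C}^\al$. For this weak convergence to take place in a Polish space, we work in $\mathscr{C}^{0,\al'}$ with a slightly smaller $\al'$. The reason is that the tightness obtained from the Kolmogorov criterion already gives control in a stronger H\"older exponent. Since each $\bfxe$ is a smooth rough path, it lies in $\mathscr{C}^{0,\al'}$. The continuous mapping theorem then gives $\ze=\Phi(\xi,\bfxe)\to\Phi(\xi,\mathbf{X})=:Z$ weakly in $\mathcal{C}^{\al'}$. This is the only place where some care is needed: we must make sure that the exponent in Theorem \ref{main2} can be taken in $(1/3,1/2)$ and that the limit law is supported on geometric rough paths. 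The latter holds because $\mathbf{X}$ is a Stratonovich lift plus an antisymmetric drift term proportional to $(t-s)$, so its symmetric part equals $\frac12 X_{s,t}\otimes X_{s,t}$.

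\emph{Step 3: converting the RDE into an SDE.} It remains to identify $Z$. The lift splits as $\mathbb{X}_{s,t}=\mathbb{X}^{\mathrm{Strat}}_{s,t}+\Lambda(t-s)$ with $\Lambda=\frac{\bar\Gamma-\bar\Gamma^\top}{2}$. An RDE driven by such a rough path is equivalent to the Stratonovich RDE with the extra drift $D\sigma(Z)\sigma(Z)\Lambda$, that is, the drift with $i$-th component $\sum_{k,l,j}\partial_k\sigma_{ij}\sigma_{kl}\Lambda^{lj}$. To see this, compare the local expansions $Z_{s,t}\approx b\,(t-s)+\sigma X_{s,t}+D\sigma\sigma\,\mathbb{X}_{s,t}$ and note that the term $D\sigma\sigma\Lambda(t-s)$ is a drift.

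Next, by the standard result that Stratonovich rough integration against the Stratonovich lift of a semimartingale agrees with Stratonovich stochastic integration, we may use $dX=F(X)dt+S^{1/2}dB$. The Stratonovich-to-It\^o correction is $\frac12\sum\partial_k\sigma_{ij}\sigma_{kl}\,d\langle X^l,X^j\rangle/dt$, and here $d\langle X^l,X^j\rangle/dt=(S^{1/2}S^{1/2\top})_{lj}$.

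\emph{Step 4: collecting terms.} Adding the two corrections gives exactly $G^{lj}=\frac12\big(\sum_k S^{1/2}_{lk}S^{1/2}_{jk}+\bar\Gamma_{lj}-\bar\Gamma_{jl}\big)$. The drift part $\sigma(Z)F(X)\,dt$ of $dX$ produces the term $\sum_j\sigma_{ij}F^j(X_t)$. This yields the stated equation.

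The main obstacle is Step 2, namely making the functional setting consistent (Polishness and the choice of H\"older exponent) so that the continuous mapping theorem applies. The rest is bookkeeping.
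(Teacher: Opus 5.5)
Your proposal is correct and follows essentially the same route as the paper. You apply the continuity of the It\^o--Lyons map to the convergence in Theorem \ref{main2}, and then convert the limiting RDE into an SDE. The only real difference is bookkeeping. The paper writes $\mathbb{X}_{s,t}=\mathbb{X}^{\text{It\^o}}_{s,t}+G(t-s)$ in one step and invokes the RDE/It\^o-SDE equivalence. You instead first absorb $\frac{\bar\Gamma-\bar\Gamma^\top}{2}(t-s)$ into the drift, then add the Stratonovich-to-It\^o correction $\frac12 S^{1/2}(S^{1/2})^\top$; this produces the same matrix $G$. Your explicit handling of the Polish space $\mathscr{C}^{0,\al'}$ is a point the paper glosses over by citing a probabilistic universal limit theorem.
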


	\section{Moment Estimates}\label{section-moment}
	From now on, we write $\|\zeta\|_p:=(\E|\zeta|^p)^{1/p}$ for random vector $\zeta$. 
	\begin{prop}\label{prop-momentestimateforv}
		Let $\we:=\sqrt{\ep}\ve.$ For each $0\leq t\leq T$ and $p\geq 1$, there exists a constant $C_{p,T}$ such that
		\begin{equation*}
			\supe\supt\|\we_t\|_p\leq C_{p,T}.
		\end{equation*}
		Equivalently,
		\begin{equation*}
			\supe\supt\|\ve_t\|_p\leq C_{p,T}\ep^{-1/2}.
		\end{equation*}
	\end{prop}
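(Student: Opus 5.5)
Solving the linear equation for $\ve$ by variation of constants gives
\begin{equation*}
\ve_t=\frac{1}{\ep}\intot e^{-(t-s)/\ep}F(\xe_s)\,ds+\frac{1}{\ep^{3/2}}\intot e^{-(t-s)/\ep}\ee(s)\,ds,
\end{equation*}
so that
\begin{equation*}
\we_t=\sqrt{\ep}\,\ve_t=\frac{1}{\sqrt{\ep}}\intot e^{-(t-s)/\ep}F(\xe_s)\,ds+\frac{1}{\ep}\intot e^{-(t-s)/\ep}\ee(s)\,ds=:I_1^\ep(t)+I_2^\ep(t).
\end{equation*}
By ($\mathbf{A}_3$), $F$ is bounded, so $|I_1^\ep(t)|\le \|F\|_\infty \sqrt{\ep}\int_0^\infty e^{-u}du\le \|F\|_\infty$ deterministically. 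This term causes no difficulty. The feedback through $\xe$ disappears because of boundedness, so no Gronwall argument is needed.

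For $I_2^\ep$, substitute $s=\ep r$ to get $I_2^\ep(t)=\int_0^{t/\ep}e^{-(t/\ep-r)}\eta(r)\,dr$. This is an exponentially weighted average of $\eta$ over a window of unit length. By Minkowski's integral inequality and ($\mathbf{A}_1$),
\begin{equation*}
\|I_2^\ep(t)\|_p\le \int_0^{t/\ep}e^{-(t/\ep-r)}\|\eta(r)\|_p\,dr\le C_p^{1/p}\int_0^\infty e^{-u}du=C_p^{1/p}.
\end{equation*}
This bound is uniform in $t\in[0,T]$ and $\ep\in(0,1]$. Combining the two estimates gives $\supe\supt\|\we_t\|_p\le \|F\|_\infty+C_p^{1/p}$, and the equivalent statement for $\ve$ follows by dividing by $\sqrt{\ep}$. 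In fact the constant does not even depend on $T$.

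The only point needing care is the justification of the variation-of-constants formula pathwise. It holds since $\eta$ has $C^1$ (in particular continuous) trajectories, and $\xe$ is well defined and continuous by the Lipschitz property in ($\mathbf{A}_3$). No step seems to be a genuine obstacle: the mixing condition ($\mathbf{A}_2$) is not needed here, because crude Minkowski bounds suffice for a weight of total mass $O(1)$. If one instead wanted a bound on $\sup_t$ inside the expectation, one would need the mixing and a chaining argument, but the statement only requires pointwise-in-$t$ moments.
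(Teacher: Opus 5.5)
Your proof is correct and is essentially the paper's argument. You use the same variation-of-constants decomposition of $W^\varepsilon$ into a drift term bounded deterministically by $\|F\|_\infty\sqrt{\varepsilon}$ and an exponentially weighted average $\int_0^{t/\varepsilon}e^{-(t/\varepsilon-r)}\eta(r)\,dr$, which is bounded in $L^p$ via the substitution $s=\varepsilon r$ and Minkowski's inequality using only ($\mathbf{A}_1$).
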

	\begin{proof}
		From (\ref{SKxv}) one solves that
		\begin{equation}
			\ve_s=\epi\intos e^{-\epi(s-r)}d\ue_r+\epi\intos e^{-\epi(s-r)}F(\xe_r)dr.
		\end{equation}
		Multiplying by $\sqrt{\ep},$
		\begin{equation}\label{decompowe}
			\we_s=\frac{1}{\ep}\int_0^s e^{-\epi(s-r)}\eta(r/\ep)dr+\frac{1}{\sqrt{\ep}}\intos e^{-\epi(s-r)}F(\xe_r)dr=:\ze_s+R^\ep_s.
		\end{equation}	
		By change-of-variable,$$\ze_s=\int_0^{s/\ep}e^{-u}\eta(\frac{s}{\ep}-u)du,\enspace R^\ep_s=\ep^{1/2}\int_0^{s/\ep}e^{-u}F(\xe_{s-\ep u})du,$$ so
		\begin{equation}\label{esforze}
			\|\ze_s\|_p\leq\int_0^{s/\ep}\|e^{-u}\eta(\frac{s}{\ep}-u)\|_pdu= \|\eta(0)\|_p\int_0^\infty e^{-u}du\leq C_p,
		\end{equation} 
		and the boundedness of $F$ implies
		\begin{equation}\label{esforre}
			\|R^\ep_s\|_\infty\lesssim \ep^{1/2}\int_0^{\infty}e^{-u}du=\mathcal{O}(\ep^{1/2}).
		\end{equation}
		The proof is finished by combining \eqref{esforze} and \eqref{esforre}.
	\end{proof}
	We need a moment inequality for sum of mixing sequence, which is a slightly less general version of \cite[Thm 3]{YOS78}. 
	\begin{lemma}\label{lemma-mixingsum}
		Given an even integer $r$ and $\delta>0$. Let $(X_j)_{j\in\mathbb{Z}}$ be a real centered strictly stationary $\al$-mixing sequence. Assume that
		$\E|X_1|^{r+\delta}<\infty$, and 
		\begin{equation}\label{yokoyamacondition}
			\sum_{i=0}^\infty(i+1)^{\frac{r}{2}-1}[\al(i)]^{\frac{\delta}{r+\delta}}<\infty,
		\end{equation}
		where $$\al(n):=\sup_{k\in\mathbb{N}}\al(\sigma(X_j:j\leq k),\sigma(X_j:j\geq k+n)).$$
		 Then there is a constant $C$ such that for all $n\in\mathbb{N}$,
		$$\E(\sum_{j=1}^{n}X_j)^r\leq Cn^{r/2}.$$
	\end{lemma}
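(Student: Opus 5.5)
The plan is to expand the $r$-th moment and control each term with the covariance (Davydov) inequality, organised by the largest gap between consecutive indices. This is the classical Yokoyama argument. Write
\[
\E\Big(\sum_{j=1}^n X_j\Big)^r=\sum_{1\le j_1,\dots,j_r\le n}\E[X_{j_1}\cdots X_{j_r}]\le r!\sum_{1\le i_1\le\dots\le i_r\le n}|\E[X_{i_1}\cdots X_{i_r}]| .
\]
The proof then goes by induction on the even integer $r$, with the case $r=2$ serving as the base. For $r=2$, stationarity and Proposition~\ref{prop-davydov} give $|\E X_1X_{1+k}|\lesssim \al(k)^{\delta/(2+\delta)}\|X_1\|_{2+\delta}^2$, where the exponent is $1-2/(2+\delta)$. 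The bound $\le Cn$ follows once $\sum_k\al(k)^{\delta/(2+\delta)}<\infty$. That summability is implied by \eqref{yokoyamacondition} after adjusting exponents: one may also run the whole argument with the Hölder exponent $r+\delta$ throughout, using $\al\le 1$.

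For general $r$, fix an ordered tuple $i_1\le\dots\le i_r$ with gaps $g_k=i_{k+1}-i_k$, and let $m$ be the index of the largest gap $g:=g_m$. Split the product at $m$ into $Y=X_{i_1}\cdots X_{i_m}$ and $Z=X_{i_{m+1}}\cdots X_{i_r}$. The $\sigma$-fields of $Y$ and $Z$ are separated by $g$. Apply Davydov's inequality with Hölder exponents $(r+\delta)/m$ and $(r+\delta)/(r-m)$ and the complementary exponent $\delta/(r+\delta)$; generalised Hölder bounds the norms of $Y$ and $Z$ by powers of $\|X_1\|_{r+\delta}$. This gives
\[
|\E[YZ]|\le |\E Y\,\E Z|+C\,\al(g)^{\delta/(r+\delta)}.
\]
The factorised term $\E Y\,\E Z$ is a product of lower-order moments. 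Summing it over tuples yields terms of the form $\E(\sum X_j)^{m}$-type block sums times $\E(\sum X_j)^{r-m}$-type block sums. These vanish when $m=1$ or $r-m=1$ by centering, and otherwise are bounded by the induction hypothesis; odd blocks are handled with $|\cdot|$ and Cauchy--Schwarz. The result is $O(n^{m/2}n^{(r-m)/2})=O(n^{r/2})$.

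The main work is counting the remainder terms. These are the tuples in which $g$ is the maximal gap, and the remaining $r-2$ gaps are each at most $g$. The count is at most $n\cdot r\cdot(g+1)^{r-2}$: choose $i_1$, choose the position $m$, then choose the remaining gaps. So the remainder sum is bounded by
\[
C n\sum_{g=0}^{n}(g+1)^{r-2}\al(g)^{\delta/(r+\delta)}.
\]
Condition \eqref{yokoyamacondition} only provides the weight $(g+1)^{r/2-1}$, so this naive bound is too crude, and this is the obstacle. I will refine it in Yokoyama's way. Let $M$ denote the position of the maximum gap, and split the remainder count by whether $M$ comes from the "inner" structure. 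If it does, the remaining indices are paired into clusters of size at least two and contribute $n^{r/2-1}$ free starting points instead of a single $n$. Concretely, apply the splitting recursively: whenever one side of the split has a single index, its expectation factor vanishes. Each surviving configuration therefore has at most $r/2$ free "cluster starts" ranging over $n$. The other gaps are bounded by $g$, but only about $r/2-1$ of them are genuinely unconstrained within clusters. This produces
\[
n^{r/2}\sum_g (g+1)^{r/2-1}\al(g)^{\delta/(r+\delta)}=O(n^{r/2}).
\]
If this bookkeeping becomes too delicate, I would simply cite \cite[Thm 3]{YOS78} directly, since the lemma is a special case of that result.
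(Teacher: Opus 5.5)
The paper gives no proof of this lemma; it quotes it as a special case of \cite[Thm 3]{YOS78}, so your closing fallback is exactly what the paper does. Your expansion, maximal-gap split and Davydov step (Proposition \ref{prop-davydov} with exponents $(r+\delta)/k$ and $(r+\delta)/(m-k)$) are the standard route. As a self-contained proof, however, the sketch has two genuine problems.

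First, the induction is run on the wrong quantity. After splitting at the maximal gap, the factorised contribution is $\sum|\E[X_{i_1}\cdots X_{i_k}]|\,|\E[X_{i_{k+1}}\cdots X_{i_m}]|$ over ordered tuples. It carries absolute values, and the split position depends on the tuple. So it is not a product of moments $\E(\sum X_j)^k\cdot\E(\sum X_j)^{m-k}$. Nor does any Cauchy--Schwarz step turn bounds on $\E S_n^{k}$ for even $k$ into a bound on, say, $\sum_{i_1\le i_2\le i_3}|\E X_{i_1}X_{i_2}X_{i_3}|$. What closes is an induction on
\[
A_m(n):=\sum_{1\le i_1\le\dots\le i_m\le n}|\E[X_{i_1}\cdots X_{i_m}]|
\]
for every integer $2\le m\le r$, odd ones included, with $A_1(n)=0$ by centering. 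The factorised terms are then bounded by $\sum_{k=2}^{m-2}A_k(n)A_{m-k}(n)$, and $\E S_n^r\le r!\,A_r(n)$.

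Second, your ``refinement'' of the remainder count is not a valid argument. The vanishing of $\E X_i$ kills only the factorised term. The covariance remainder $C\al(g)^{1-m/(r+\delta)}$ is incurred by every tuple; for a singleton split it is the whole bound. So no configurations can be discarded, and the number of tuples with maximal gap $g$ really is of order $n(g+1)^{m-2}$.

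No refinement is needed, because the obstacle you identify does not exist. Since $g\le n-1$ and $m\ge 2$, you have $(g+1)^{m-2}=(g+1)^{m/2-1}(g+1)^{m/2-1}\le n^{m/2-1}(g+1)^{m/2-1}$. Hence
\[
n\sum_{g=0}^{n-1}(g+1)^{m-2}\al(g)^{1-\frac{m}{r+\delta}}\le n^{m/2}\sum_{g\ge 0}(g+1)^{\frac{r}{2}-1}\al(g)^{\frac{\delta}{r+\delta}},
\]
using $m\le r$, $1-\frac{m}{r+\delta}\ge\frac{\delta}{r+\delta}$ and $\al\le 1$. This gives
\[
A_m(n)\le\sum_{k=2}^{m-2}A_k(n)A_{m-k}(n)+Cn^{m/2}.
\]
Strong induction on $m$, starting from $A_2(n)\le Cn$ and $A_3(n)\le Cn^{3/2}$ where the sum is empty, yields $A_m(n)\le C_m n^{m/2}$ for all $2\le m\le r$, which is the lemma.
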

	\begin{remark}[Non-centered case]
		If $X_j$ is not centered, set $\tilde{X_j}=X_j-\E X_j,$ then
		\begin{equation}\label{Yokoyama2}
			\E|\sum_{j=1}^n X_j|^r=\E|\sum_{j=1}^{n} \tilde{X_j}+\sum_{j=1}^nEX_j|^r\leq C_r n^{r/2}+C_r n^r\leq C_r n^{r}.
		\end{equation}
	\end{remark}
	
	We need the following Gordin decomposition theorem \cite{GOR69} (see also \cite[Thm 2]{VOL93}).
	
	\begin{lemma}[Gordin decomposition]\label{lemma-Gordin}
		Let $(Y_n)_{n\in\mathbb{Z}}$ be a strictly stationary sequence, $\mathcal{G}_n:=\sigma(Y_k:k\leq n)$. Assume that
		\begin{equation}\label{conditionforGordin}
			\E Y_0=0,\quad\E |Y_0|^2<\infty, \quad\sum_{k=0}^{\infty} \|\E(Y_{n+k}|\mathcal{G}_n)\|_2<\infty. 
		\end{equation}
		Then there exist a strictly stationary random sequence $(d_n)_n$ adapted to $(\mathcal{G}_n)_n$ and a strictly stationary random sequence $(Z_n)_n$, such that
		\begin{equation*}
			Y_n=d_{n}+Z_n-Z_{n+1},\quad n\in\mathbb{Z}.
		\end{equation*} 
		Besides, $(d_n)$ is a martingale difference sequence, that is,
		\begin{equation*}
			\E(d_{n+1}|\mathcal{G}_n)=0,\quad n\in\mathbb{Z}.
		\end{equation*}
	\end{lemma}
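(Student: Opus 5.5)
The natural route is the classical Gordin construction: build the coboundary from the series of conditional expectations and take the martingale part as the remainder. For $n\in\mathbb{Z}$ set
\begin{equation*}
Z_n:=\sum_{k=0}^\infty \E(Y_{n+k}\,|\,\mathcal{G}_{n-1}),
\end{equation*}
where $\mathcal{G}_{n-1}$ is the past strictly before time $n$. We then define $d_n:=Y_n-Z_n+Z_{n+1}$, so the identity $Y_n=d_n+Z_n-Z_{n+1}$ holds by construction. What remains is to show that $Z_n$ is well defined in $L^2$, that both sequences are strictly stationary, and that $(d_n)$ is a martingale difference sequence with respect to $(\mathcal{G}_n)$.

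\textbf{Convergence.} By the tower property and contractivity of conditional expectation in $L^2$,
\begin{equation*}
\|\E(Y_{n+k}|\mathcal{G}_{n-1})\|_2\le \|\E(Y_{n+k}|\mathcal{G}_{n})\|_2 .
\end{equation*}
The right-hand side is summable in $k$ by \eqref{conditionforGordin}; this uses stationarity to make the summability independent of $n$. Hence the series defining $Z_n$ converges absolutely in $L^2$.

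\textbf{Stationarity.} Each term $\E(Y_{n+k}|\mathcal{G}_{n-1})$ is a measurable functional of $(Y_j)_{j\le n+k}$, namely a fixed Borel function $g_k$ of the shifted sequence. By strict stationarity of $(Y_n)$, the joint laws of $(Z_{n+m},Y_{n+m})_{n}$ do not depend on $m$. Passing to the $L^2$ limit, $(Y_n,Z_n,Z_{n+1})_n$ is jointly stationary, and therefore so is $(d_n)$.

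\textbf{Adaptedness and the martingale difference property.} Since $Z_{n+1}=\sum_{k\ge0}\E(Y_{n+1+k}|\mathcal{G}_n)$ is $\mathcal{G}_n$-measurable and $Z_n$ is $\mathcal{G}_{n-1}$-measurable, $d_n$ is $\mathcal{G}_n$-measurable. For the martingale property, compute
\begin{equation*}
\E(d_{n+1}|\mathcal{G}_n)=\E(Y_{n+1}|\mathcal{G}_n)-Z_{n+1}+\E(Z_{n+2}|\mathcal{G}_n).
\end{equation*}
By $L^2$-continuity of conditional expectation and the tower property,
\begin{equation*}
\E(Z_{n+2}|\mathcal{G}_n)=\sum_{k\ge0}\E(Y_{n+2+k}|\mathcal{G}_n)=\sum_{k\ge1}\E(Y_{n+1+k}|\mathcal{G}_n).
\end{equation*}
Adding $\E(Y_{n+1}|\mathcal{G}_n)$ gives exactly $Z_{n+1}$, so $\E(d_{n+1}|\mathcal{G}_n)=0$.

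\textbf{Main obstacle.} The only delicate points are the interchange of conditional expectation with the infinite sum and the measurability needed for stationarity. The interchange is handled by $L^2$ convergence. For stationarity, the cleanest route is to realize everything on the canonical path space with the shift operator, and this is what I would do first. The assumption $\E Y_0=0$ is not used directly beyond ensuring that the series condition is meaningful.
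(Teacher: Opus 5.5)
Your proof is correct, and it is the classical Gordin construction: $Z_n=\sum_{k\ge0}\E(Y_{n+k}|\mathcal{G}_{n-1})$, hence $d_n=\sum_{k\ge0}[\E(Y_{n+k}|\mathcal{G}_n)-\E(Y_{n+k}|\mathcal{G}_{n-1})]$; the paper does not reprove this but takes it from the cited works of Gordin and Volný. As a minor aside, $\E Y_0=0$ is actually forced by the summability hypothesis, since by stationarity $|\E Y_0|=|\E\,\E(Y_k|\mathcal{G}_0)|\le\|\E(Y_k|\mathcal{G}_0)\|_2\to0$ as $k\to\infty$.
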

	
	The main proposition of this section is as follows. 
	\begin{prop}\label{prop-momentestimate}
		For each $0<\ep\leq 1$ ,$0\leq s\leq t\leq T,$ and $p\geq 1$, there exists a constant $C$ independent of $\ep, s$ and $t$, such that
		
		(i) $$\|\ue_{s,t}\|_p\leq C|t-s|^{1/2},\quad \|\bbue_{s,t}\|_p\leq C|t-s|.$$
		
		(ii) $$\|\xe_{s,t}\|_p\leq C|t-s|^{1/2},\quad \|\bbxe_{s,t}\|_p\leq C|t-s|.$$
	\end{prop}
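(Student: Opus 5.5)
The plan is to reduce everything to moment bounds for sums of stationary, exponentially $\alpha$-mixing blocks, using Lemma \ref{lemma-mixingsum} for the first level and Lemma \ref{lemma-Gordin} for the second. For $U^\varepsilon$, change variables: $U^\varepsilon_{s,t}=\varepsilon^{1/2}\int_{s/\varepsilon}^{t/\varepsilon}\eta(u)\,du$. It suffices to take $p$ an even integer. We distinguish two cases.

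\emph{Case $|t-s|\le\varepsilon$.} The crude bound $\|U^\varepsilon_{s,t}\|_p\le \varepsilon^{-1/2}|t-s|\,\|\eta(0)\|_p\le C|t-s|^{1/2}$ suffices.

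\emph{Case $|t-s|>\varepsilon$.} Split $[s/\varepsilon,t/\varepsilon]$ into $n\approx (t-s)/\varepsilon$ unit blocks, plus two boundary pieces that are $O(1)$ in $L^p$. Set $Y_j:=\int_j^{j+1}\eta(u)\,du$. This is a centered, strictly stationary sequence with moments of all orders by ($\mathbf{A}_1$), and its mixing coefficients are bounded by $m(j-1)\le Ce^{-C(j-1)}$ by ($\mathbf{A}_2$), so condition \eqref{yokoyamacondition} holds for every $r$. Lemma \ref{lemma-mixingsum} then gives $\|\sum_{j\le n}Y_j\|_p\le Cn^{1/2}$, hence $\|U^\varepsilon_{s,t}\|_p\le C\varepsilon^{1/2}(n^{1/2}+1)\le C|t-s|^{1/2}$.

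For $\mathbb{U}^\varepsilon_{s,t}=\varepsilon\int\!\!\int_{s/\varepsilon<u<v<t/\varepsilon}\eta(u)\otimes\eta(v)\,du\,dv$, the case $|t-s|\le\varepsilon$ is again trivial, via $|\mathbb{U}^\varepsilon_{s,t}|\le(\int|dU^\varepsilon|)^2$. Otherwise, decompose along the blocks into three parts: a diagonal part (the $n$ within-block double integrals, each bounded in $L^p$, contributing $\varepsilon\cdot Cn\le C|t-s|$), the off-diagonal part $\varepsilon\sum_j S_{j-1}\otimes Y_j$ with $S_k=\sum_{i\le k}Y_i$, and boundary terms handled by the first-level bound. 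For the off-diagonal part, apply Lemma \ref{lemma-Gordin} to $(Y_j)$ to write $Y_j=d_j+Z_j-Z_{j+1}$.

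Before using this, I must check two things: that the Gordin condition holds, and that $d_j,Z_j$ lie in every $L^q$. Both should follow from a Davydov-type covariance bound (Proposition \ref{prop-davydov}), which gives $\|\mathbb{E}(Y_k\mid\mathcal{G}_0)\|_q\lesssim \alpha(k-1)^{c}$. This bound is summable, and it controls $Z_n=\sum_{k\ge0}\mathbb{E}(Y_{n+k}\mid\mathcal{G}_{n-1})$ in every $L^q$.

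With this decomposition in hand, the two pieces of the off-diagonal sum are handled as follows.
\begin{itemize}
\item The martingale piece $\sum_j S_{j-1}\otimes d_j$ is bounded by Burkholder--Davis--Gundy and H\"older: its $L^p$ norm is at most $C(\sum_j\|S_{j-1}\|_{2p}^2\|d_j\|_{2p}^2)^{1/2}\le C(\sum_j j)^{1/2}\le Cn$.
\item The coboundary piece, after summation by parts, becomes $\sum_j Y_{j-1}\otimes Z_j$ plus boundary terms like $S_{n}\otimes Z_{n+1}$. The sum has $L^p$ norm $\le Cn$ by the non-centered Remark (or simply the triangle inequality), and the boundary terms are $O(n^{1/2})$.
\end{itemize}
Multiplying by $\varepsilon$ yields $\|\mathbb{U}^\varepsilon_{s,t}\|_p\le C|t-s|$.

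For $X^\varepsilon$, integrate \eqref{SKx} to obtain $X^\varepsilon_{s,t}=U^\varepsilon_{s,t}+\int_s^tF(X^\varepsilon_r)\,dr-\varepsilon V^\varepsilon_{s,t}$. By Proposition \ref{prop-momentestimateforv}, $\|\varepsilon V^\varepsilon_{s,t}\|_p\le C\varepsilon^{1/2}\le C|t-s|^{1/2}$ when $|t-s|\ge\varepsilon$. When $|t-s|<\varepsilon$, use $X^\varepsilon_{s,t}=\int_s^tV^\varepsilon_r\,dr$ directly, which gives $\|X^\varepsilon_{s,t}\|_p\le C|t-s|\varepsilon^{-1/2}\le C|t-s|^{1/2}$, and similarly $\|\mathbb{X}^\varepsilon_{s,t}\|_p\le C|t-s|^2/\varepsilon\le C|t-s|$. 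For $|t-s|\ge\varepsilon$, write $X^\varepsilon_{s,r}=A_{s,r}-\varepsilon V^\varepsilon_{s,r}$ with $A:=U^\varepsilon+\int F$, and expand $\mathbb{X}^\varepsilon_{s,t}=\int_s^tX^\varepsilon_{s,r}\otimes dX^\varepsilon_r$ into the following terms.
\begin{itemize}
\item $\varepsilon\int_s^t V^\varepsilon_{s,r}\otimes V^\varepsilon_r\,dr$, which is $O(\varepsilon\cdot|t-s|\cdot\varepsilon^{-1})$ in $L^p$.
\item $\int A_{s,r}\otimes dA_r$, which consists of $\mathbb{U}^\varepsilon_{s,t}$ plus cross terms with the Lipschitz drift, each $O(|t-s|)$ by the first-level bounds.
\item $-\varepsilon\int A_{s,r}\otimes dV^\varepsilon_r$.
\end{itemize}
Integrating the last term by parts gives $-\varepsilon A_{s,t}\otimes V^\varepsilon_t+\varepsilon\int_s^t dA_r\otimes V^\varepsilon_r$. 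Its dangerous component is $\varepsilon\int_s^t dU^\varepsilon_r\otimes V^\varepsilon_r=\int_s^t\eta(r/\varepsilon)\otimes W^\varepsilon_r\,dr$, which naively is only $O(|t-s|)$ if one exploits the structure.

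This is the step I expect to be the main obstacle. My plan there is to use the decomposition \eqref{decompowe}, $W^\varepsilon=\zeta^\varepsilon+R^\varepsilon$. The $R^\varepsilon$ part is harmless, since $\|R^\varepsilon\|_\infty=O(\varepsilon^{1/2})$. The main part becomes $\varepsilon\int_{s/\varepsilon}^{t/\varepsilon}\eta(u)\otimes\tilde\zeta(u)\,du$ with $\tilde\zeta(u)=\int_0^{u}e^{-w}\eta(u-w)\,dw$, a functional of the past of $\eta$ that is asymptotically stationary and mixing. Applying Lemma \ref{lemma-mixingsum} together with the non-centered Remark on unit blocks (truncating the exponential memory at distance $\log n$ to retain mixing) gives a bound $C\varepsilon n\le C|t-s|$. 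The remaining pieces $\varepsilon A_{s,t}\otimes V^\varepsilon_t$ and $\varepsilon\int F\,dr\otimes V^\varepsilon$ are bounded directly by Proposition \ref{prop-momentestimateforv}.
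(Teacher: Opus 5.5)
Your proof is correct. For part (i) it essentially matches the paper, and for part (ii) it takes a genuinely different route.

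\textbf{Part (i).} Like the paper, you use unit blocks, Lemma \ref{lemma-mixingsum} for the first level, and Gordin for the off-diagonal sum. You improve on it in two places:
\begin{itemize}
\item You decompose only the second factor and apply Burkholder--Davis--Gundy directly to the martingale transform $\sum_j S_{j-1}\otimes d_j$. This avoids the paper's four-term expansion and its symmetrisation via $\Delta_n$.
\item You actually verify the Gordin summability condition and that $d_n,Z_n$ lie in every $L^q$. The paper uses both facts without comment.
\end{itemize}

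\textbf{Part (ii).} The paper rewrites $\mathbb{X}^\varepsilon_{s,t}$ as $\varepsilon\int\!\!\int\xi(q)\otimes\xi(r)\,dq\,dr$, where $\xi$ is the exponential filter of $\eta$. It then replaces $\xi$ by the stationary filter $\xi^*(v)=\int_{-\infty}^v e^{-(v-w)}\eta(w)\,dw$, controls the corrections $I_2,I_3,I_4,T_1,T_2,T_3$, and applies Proposition \ref{prop-momentestimateformixing} to $\xi^*$. That last step tacitly requires $\xi^*$ to be exponentially $\alpha$-mixing, which the paper never checks. Strong mixing is not automatically inherited by such infinite-memory filters.

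You instead reduce $\mathbf{X}^\varepsilon$ to $\mathbf{U}^\varepsilon$ through the integrated equation
\[
X^\varepsilon_{s,t}=U^\varepsilon_{s,t}+\int_s^tF(X^\varepsilon_r)\,dr-\varepsilon V^\varepsilon_{s,t}
\]
and one integration by parts. Mixing is then used only for $\eta$ itself. Every remainder is controlled by Proposition \ref{prop-momentestimateforv}, the first-level bounds and $\varepsilon\le|t-s|$. The argument is shorter and more robust.

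\textbf{A correction to your own assessment.} The term you flag as the main obstacle is not one. Only $O(|t-s|)$ is needed, and Minkowski, H\"older and Proposition \ref{prop-momentestimateforv} give directly
\[
\Big\|\int_s^t\eta(r/\varepsilon)\otimes W^\varepsilon_r\,dr\Big\|_p\le\int_s^t\|\eta(r/\varepsilon)\|_{2p}\,\|W^\varepsilon_r\|_{2p}\,dr\le C|t-s|.
\]
So the memory truncation and the appeal to Lemma \ref{lemma-mixingsum} can be dropped. The non-centred Remark only reproduces this triangle-inequality bound anyway.
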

	Before proving Proposition \ref{prop-momentestimate}, we present an abstract result.
	\begin{prop}\label{prop-momentestimateformixing}
		Let $\xi$ be a centered, exponentially mixing stationary process with $\E|\xi(0)|^p<\infty$ for all $p\geq 1$. Let $a,b\in[0,\infty)$ with $a\leq b$, then there exists a constant $C_p$ depending on $\E|\xi(0)|^p$ such that 
		
		(i)\begin{equation}
			\Big\|\int_a^b\xi(u)du\Big\|_p\leq C_p (b-a)^{1/2}.
		\end{equation}
		
		(ii)\begin{equation}
			\Big\|\int_a^b\int_a^v\xi(u)\otimes \xi(v)dudv\Big\|_p\leq C_p(b-a).
		\end{equation}
	\end{prop}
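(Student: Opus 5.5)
By stationarity we may assume $a=0$ and write $L:=b-a$. For small $L$ (say $L\le 1$) both bounds are trivial: Minkowski's inequality gives $\|\int_0^L\xi\|_p\le L\|\xi(0)\|_p\le L^{1/2}\|\xi(0)\|_p$. For the double integral, Minkowski and Hölder give $\|\int_0^L\int_0^v\xi(u)\otimes\xi(v)\,du\,dv\|_p\le \tfrac{L^2}{2}\|\xi(0)\|_{2p}^2\le L$. The content is therefore the regime $L\ge1$. There we discretize into unit blocks $Y_j:=\int_{j-1}^{j}\xi(u)\,du$, $j\in\mathbb{Z}$. This gives a centered, strictly stationary sequence, $\al$-mixing with exponential rate (its mixing coefficients are dominated by those of $\xi$), with all moments finite.

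For (i), write $\int_0^L\xi=\sum_{j=1}^{\lfloor L\rfloor}Y_j+\int_{\lfloor L\rfloor}^L\xi$. The remainder has $p$-norm at most $\|\xi(0)\|_p$. We may take $p=r$ even, since the general case follows by monotonicity of $L^p$ norms. Applying Lemma \ref{lemma-mixingsum} componentwise with any $\delta>0$, condition \eqref{yokoyamacondition} holds because of exponential mixing, and we obtain $\|\sum_{j\le n}Y_j\|_r\le Cn^{1/2}$. Combining the two pieces gives $CL^{1/2}$.

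For (ii), decompose the simplex $\{0\le u\le v\le L\}$ along the unit grid. The diagonal blocks $\int_{j-1}^j\int_{j-1}^v\xi(u)\otimes\xi(v)\,du\,dv$ contribute at most $n$ terms, each with uniformly bounded $p$-norm, so their sum is $O(L)$ by Minkowski. The boundary pieces involving the fractional last interval are handled the same way using (i): they are of the form $(\int_0^{n}\xi)\otimes(\int_n^L\xi)$ plus a bounded term, and Hölder shows they are $O(L^{1/2})$. The off-diagonal part is the discrete iterated sum $\sum_{1\le i<j\le n}Y_i\otimes Y_j=\sum_{j=2}^n S_{j-1}\otimes Y_j$ with $S_k:=\sum_{i\le k}Y_i$. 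Here the moment lemma does not apply directly, since the summands are neither stationary nor of the product type it handles. We would therefore use Lemma \ref{lemma-Gordin}, with $\mathcal{G}_n=\sigma(\xi(u):u\le n)$. By Davydov's inequality (Proposition \ref{prop-davydov}) and exponential mixing, $\|\E(Y_{n+k}\mid\mathcal{G}_n)\|_2\lesssim e^{-ck}$, and in fact the same bound holds in every $L^q$. Hence $Y_j=d_j+Z_j-Z_{j+1}$, where $d_j$ is a stationary martingale difference sequence, $Z_j=\sum_{k\ge0}\E(Y_{j+k}\mid\mathcal{G}_{j-1})$ (up to the index convention), and both lie in every $L^q$.

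Substituting this into $\sum_j S_{j-1}\otimes Y_j$ gives two pieces. The martingale part $\sum_j S_{j-1}\otimes d_j$ is a martingale transform. By Burkholder's inequality its $p$-norm is bounded by $\|(\sum_j|S_{j-1}|^2|d_j|^2)^{1/2}\|_p\le(\sum_j\|S_{j-1}\|_{2p}^2\|d_j\|_{2p}^2)^{1/2}\lesssim(\sum_j j)^{1/2}\lesssim n$, using (i) for $S_{j-1}$. The coboundary part $\sum_j S_{j-1}\otimes(Z_j-Z_{j+1})$ is handled by Abel summation: it equals $S_0\otimes Z_1-S_{n-1}\otimes Z_{n+1}+\sum_{j=1}^{n-1}Y_j\otimes Z_{j+1}$. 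The boundary term is $O(n^{1/2})$ by Hölder and (i). The last sum is a sum of stationary products. After centering, its mean contributes $O(n)$, and its fluctuation should again be $O(n^{1/2})$ by Lemma \ref{lemma-mixingsum} (possibly after truncating $Z_{j+1}$ to finitely many terms of its defining series). Altogether this gives $\|\cdot\|_p\le Cn\le CL$.

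The main obstacle is this last step. The products $Y_j\otimes Z_{j+1}$ involve $Z$, which is built from conditional expectations of the infinite future. Consequently the sequence is not obviously $\al$-mixing with a good rate, so Lemma \ref{lemma-mixingsum} cannot be applied naively. If that route fails, I would fall back on the crude bound $\|\sum_{j<n}Y_j\otimes Z_{j+1}\|_p\le n\|Y_0\|_{2p}\|Z_0\|_{2p}=O(n)$ from Minkowski. This is exactly the required order, so the estimate closes without any mixing for the products. Moment finiteness of $Z_0$ in $L^{2p}$ follows from summing the exponentially decaying $L^{2p}$ Davydov bounds.
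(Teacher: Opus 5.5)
Your proof is correct. It uses the same toolkit as the paper: unit blocks, Lemma \ref{lemma-mixingsum} for (i), Gordin's martingale--coboundary decomposition with Burkholder--Davis--Gundy for the off-diagonal sum, and Minkowski for the remaining pieces. Where you differ is in how you organize the off-diagonal term.

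The paper decomposes \emph{both} factors, writing the partial sums as $D_{n-1}+Z_0-Z_n$, which gives four terms $A,B,C,D$. Its martingale--martingale term $A=\sum D_{n-1}\otimes d_n$ is then handled through the identity for $D_{N-1}\otimes D_{N-1}$. That identity splits off a symmetric telescoping part and an antisymmetric martingale $\sum\Delta_n$, which needs BDG, Doob's maximal inequality and H\"older.

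You decompose only the increment $Y_j$ and keep the true partial sum $S_{j-1}$, which is $\mathcal{G}_{j-1}$-measurable, as the predictable integrand. Its $L^{2p}$ norm is already $O(j^{1/2})$ by part (i). So one BDG estimate on the martingale transform $\sum_j S_{j-1}\otimes d_j$ plus one Abel summation finishes the argument, with two error terms instead of four and no maximal inequality. The paper's $A$ is itself a martingale transform and could also be bounded by a single BDG step.

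You are also more careful on one point. Lemma \ref{lemma-Gordin} as stated only gives $d_n,Z_n\in L^2$, yet both proofs need $\|d_0\|_{2p},\|Z_0\|_{2p}<\infty$. Your observation that $\|\E(Y_{n+k}\mid\mathcal{G}_n)\|_q$ decays exponentially in every $L^q$ (cf.\ Lemma \ref{lemma-vanishing limit(2)}) supplies this; the paper uses it without comment.

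Two small remarks:
\begin{itemize}
\item Drop the hedged idea of centering $\sum_j Y_j\otimes Z_{j+1}$ and invoking Lemma \ref{lemma-mixingsum}. The crude Minkowski bound $O(n)$ is exactly what is needed, and it is how the paper treats its analogous terms $B_2$, $C$, $D$.
\item The step $\|(\sum_j|S_{j-1}|^2|d_j|^2)^{1/2}\|_p\le(\sum_j\|S_{j-1}\|_{2p}^2\|d_j\|_{2p}^2)^{1/2}$ uses the triangle inequality in $L^{p/2}$. Take $p\ge 2$ there and cover smaller $p$ by monotonicity of $L^p$ norms.
\end{itemize}
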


	\begin{proof}
		By stationarity, it suffices to show
		\begin{equation}\label{target}
			\Big\|\int_0^N\xi(u)du\Big\|_p\leq C_p N^{1/2},\quad N\geq 0,
		\end{equation}
		and
		\begin{equation}\label{target2}
			\Big\|\int_0^N\int_0^v\xi(u)\otimes \xi(v)dudv\Big\|_p\leq C_pN, \quad N\geq 0.
		\end{equation}
		Note that \eqref{target}--\eqref{target2} are trivial when $0\leq N<1$. Indeed,
		\begin{equation}\label{small1}
			\Big\|\int_0^N\xi(u)du\Big\|_p\leq\int_0^N\|\xi(u)\|_pdu=C_pN\leq C_p N^{1/2},\quad 0\leq N<1.
		\end{equation}
		and
		\begin{eqnarray}\label{small2}
			&&\Big\|\int_0^N\int_0^v\xi(u)\otimes\xi(v)dudv\Big\|_p\nonumber\\&\leq&\int_0^N\int_0^v\|\xi(u)\otimes\xi(v)\|_pdudv\nonumber\\&\leq&\int_0^N\int_0^v\|\xi(u)\|_{2p}\cdot\|\xi(v)\|_{2p}dudv\nonumber\\&\leq&C_p\int_0^N\int_0^v C_pdudv\nonumber\\&=&C_p\frac{N^2}{2}\leq C_pN,\quad 0\leq N<1.
		\end{eqnarray}
		In the following, we always assume $N\geq1$.
		
		\noindent\textbf{Proof of} (i)\textbf{:} 
		If $N\in\mathbb{N}$, by Lemma \ref{lemma-mixingsum}, 
		\begin{equation*}
			\E\Big|\int_0^N\xi(u)du\Big|^p=\E\Big|\sum_{k=0}^{N-1}\int_k^{k+1}\xi(u)du\Big|^p\leq C_pN^{p/2}.
		\end{equation*}
		For general $N\geq 1$, write
		\begin{equation}\label{newint0n}
			\int_0^N\xi(u)du=\int_0^{\lfloor N\rfloor}\xi(u)du+\int_{\lfloor N\rfloor}^N\xi(u)du.
		\end{equation}
		By stationarity and \eqref{small1}, 
		\begin{equation}\label{smallpart}
		\E\Big|\int_{\lfloor N\rfloor}^N\xi(u)du\Big|^p\leq C_p N^{p/2}.
		\end{equation}
		From the integer case which is already proved,
		\begin{equation}\label{integerpart}
			\E\Big|\int_0^{\lfloor N\rfloor}\xi(u)du\Big|^p\leq C_pN^{p/2}.
		\end{equation}
		Combining \eqref{newint0n}--\eqref{integerpart},
		\eqref{target} is proved.
		
		\noindent\textbf{Proof of} (ii)\textbf{:}

		 We first assume $N\in\mathbb{N}$.  Partitioning the integral region, we write
		\begin{eqnarray}\label{noname01}
			\int_0^N\int_0^v\xi(u)\otimes \xi(v)dudv=I_1+I_2,
		\end{eqnarray}
		where 
		\begin{eqnarray*}
			I_1&:=&\sum\limits_{m=0}^{N-1}\int_m^{m+1}\int_m^v\xi(u)\otimes\xi(v)dudv,\\I_2&:=&\sum\limits_{0\leq m<n\leq N-1}\int_m^{m+1}\xi(u)du\otimes\int_n^{n+1}\xi(v)dv.
		\end{eqnarray*}
		Applying \eqref{Yokoyama2} componentwise,
		\begin{eqnarray}\label{noname02}
			\E|I_1|^p\leq C_p N^p.
		\end{eqnarray}
		Set $Y_m:=\int_m^{m+1}\xi(u)du, m\in\mathbb{Z}$. By Gordin decomposition, $Y_m=d_m+Z_m-Z_{m+1}$, where $(d_m)$ and $(Z_m)$ are strictly stationary, and $(d_m)$ is a martingale difference sequence. We write
		$$\sum_{j=0}^{n-1}Y_j=\sum_{m=0}^{n-1}[d_m+Z_m-Z_{m+1}]=D_{n-1}+Z_0-Z_n,$$ where $D_n:=\sum_{j=0}^n d_j$. Therefore,
		\begin{eqnarray}
			I_2&=&\sum_{n=1}^{N-1}(\sum_{j=0}^{n-1}Y_j)\otimes Y_n\nonumber\\&=&\sum_{n=1}^{N-1}(D_{n-1}+Z_0-Z_n)\otimes (d_n+Z_n-Z_{n+1})\nonumber\\&=&\sum_{n=1}^{N-1}D_{n-1}\otimes d_n+\sum_{n=1}^{N-1} D_{n-1}\otimes(Z_n-Z_{n+1})\nonumber\\&&+\sum_{n=1}^{N-1}(Z_0-Z_n)\otimes d_n+\sum_{n=1}^{N-1}(Z_0-Z_n)\otimes(Z_n-Z_{n+1})\nonumber\\&=:&A+B+C+D.
					\end{eqnarray}
		
		By Minkowski and H\"older inequality,
		\begin{eqnarray}\label{D}
			\|D\|_p&\leq&\sum_{n=1}^{N-1}\|(Z_0-Z_n)\otimes(Z_n-Z_{n+1})\|_p\nonumber\\&\leq&\sum_{n=1}^{N-1}\|Z_0-Z_n\|_{2p}\cdot\|Z_n-Z_{n+1}\|_{2p}\nonumber\\&\leq&\sum_{n=1}^{N-1}(\|Z_0\|_{2p}+\|Z_n\|_{2p})\cdot(\|Z_n\|_{2p}+\|Z_{n+1}\|_{2p})\nonumber\\&\leq&C_p(N-1)\leq C_pN,
		\end{eqnarray}
		where we used stationary property of $(Z_n)$ in the second-last inequality.
		
		A similar argument utilizing stationarity of $(d_n)$ and $(Z_n)$ gives
		\begin{eqnarray}\label{C}
			\|C\|_p&\leq&\sum_{n=1}^{N-1}\|(Z_0-Z_n)\otimes d_n\|_p\nonumber\\&\leq&\sum_{n=1}^{N-1}\|Z_0-Z_n\|_{2p}\cdot\|d_n\|_{2p}\nonumber\\&\leq&\sum_{n=1}^{N-1}(\|Z_0\|_{2p}+\|Z_n\|_{2p})\cdot\|d_n\|_{2p}\nonumber\\&\leq&C_p(N-1)\leq C_pN.
		\end{eqnarray}
		
		Expanding the sum in $B$,
		\begin{equation*}
			B=d_0\otimes Z_1+\sum_{n=2}^{N-1} d_{n-1}\otimes Z_n-D_{N-2}\otimes Z_N=:B_1+B_2+B_3.
		\end{equation*}
		Notice that
		\begin{equation}\label{B1}
			\|B_1\|_p\leq\|d_0\|_{2p}\cdot\|Z_1\|_{2p}\leq C_p\leq C_pN,
		\end{equation}
		\begin{equation}\label{B2}
			\|B_2\|_p\leq\sum_{n=2}^{N-1}\|d_{n-1}\otimes Z_n\|_p\leq \sum_{n=2}^{N-1}\|d_{n-1}\|_{2p}\cdot\|Z_n\|_{2p}=\sum_{n=2}^{N-1}\|d_{0}\|_{2p}\cdot\|Z_0\|_{2p}= C_p(N-2)\leq C_pN,
		\end{equation}
		and
		\begin{equation}\label{B31}
			\|B_3\|_p\leq \|D_{N-2}\|_{2p}\|Z_N\|_{2p}=\|Z_0\|_{2p}\|D_{N-2}\|_{2p}=C_p\|D_{N-2}\|_{2p}.
		\end{equation}
		By discrete Burkholder--Davis--Gundy inequality \cite[Thm 2.10]{HH80},
		\begin{equation*}
			\E|D_{N-2}|^{2p}\leq C_p\E[(|d_0|^2+\cdots+|d_{N-2}|^2)^p].
		\end{equation*}
		Taking $\frac{1}{2p}$-th power,
		\begin{equation}\label{B32}
			\|D_{N-2}\|_{2p}\leq C_p\||d_0|^2+\cdots+|d_{N-2}|^2\|_p^{1/2}\leq C_p(\||d_0|^2\|_p+\cdots+\||d_{N-2}|^2\|_p)^{1/2}\leq C_p(N-1)^{1/2}.
		\end{equation}
		From \eqref{B31} and \eqref{B32},
		\begin{equation}\label{B3}
			\|B_3\|_p\leq C_p (N-1)^{1/2}\leq C_pN.
		\end{equation}
		Combining \eqref{B1}, \eqref{B2} and \eqref{B3},
		\begin{equation}\label{B}
			\|B\|_p\leq C_pN.
		\end{equation}
		
		Now we deal with $A$, which is the most subtle part. Note that
		\begin{eqnarray*}
			D_n\otimes D_n&=&(D_{n-1}+d_n)\otimes(D_{n-1}+d_n)\\&=&D_{n-1}\otimes D_{n-1}+D_{n-1}\otimes d_n+d_n\otimes D_{n-1}+d_n\otimes d_n,
		\end{eqnarray*}
		i.e.,
		\begin{equation*}
			D_n\otimes D_n-D_{n-1}\otimes D_{n-1}=D_{n-1}\otimes d_n+d_n\otimes D_{n-1}+d_n\otimes d_n.
		\end{equation*}
		Summing from $n=1$ to $n=N-1$,
		\begin{equation}\label{noname20}
			D_{N-1}\otimes D_{N-1}-d_0\otimes d_0=\sum_{n=1}^{N-1}[D_{n-1}\otimes d_n+d_n\otimes D_{n-1}+d_n\otimes d_n].
		\end{equation}
		Set $\Delta_n=D_{n-1}\otimes d_n-d_n\otimes D_{n-1}$. Substitute into \eqref{noname20},
		\begin{eqnarray*}
	&&D_{N-1}\otimes D_{N-1}-d_0\otimes d_0\\&=&\sum_{n=1}^{N-1}[D_{n-1}\otimes d_n+(D_{n-1}\otimes d_n-\Delta_n)+d_n\otimes d_n]\\&=&2\sum_{n=1}^{N-1}D_{n-1}\otimes d_n-\sum_{n=1}^{N-1}\Delta_n+\sum_{n=1}^{N-1}d_n\otimes d_n.
		\end{eqnarray*}
		Rearranging,
		\begin{eqnarray}\label{newA}
			&&\sum_{n=1}^{N-1}D_{n-1}\otimes d_n\nonumber\\&=&\frac{1}{2}(D_{N-1}\otimes D_{N-1}-d_0\otimes d_0)-\frac{1}{2}\sum_{n=1}^{N-1}d_n\otimes d_n+\frac{1}{2}\sum_{n=1}^{N-1}\Delta_n\nonumber\\&=:&A_1-A_2+A_3.
		\end{eqnarray}
		Note that left-hand-side of \eqref{newA} is precisely $A$. Utilizing H\"older inequality and \eqref{B32},
		\begin{equation*}
			\|D_{N-1}\otimes D_{N-1}\|_p\leq\|D_{N-1}\|_{2p}\cdot\|D_{N-1}\|_{2p}\leq C_p N^{1/2}\cdot N^{1/2}=C_pN,
		\end{equation*}
		and trivially,
		\begin{equation*}
		 \|d_0\otimes d_0\|_p\leq C_p\leq C_pN,
		\end{equation*}
		and the above two inequalities imply
		\begin{equation}\label{A1}
			\|A_1\|_p\leq C_pN.
		\end{equation}
		Besides, the stationarity of $(d_n)$ gives
		\begin{equation}\label{A2}
			\|A_2\|_p\leq\sum_{n=1}^{N-1}\|d_n\otimes d_n\|_p\leq\sum_{n=1}^{N-1}\|d_n\|_{2p}\|d_n\|_{2p}=C_p(N-1)\leq C_pN.
		\end{equation}
		Lastly we deal with $A_3$. Since
		\begin{eqnarray*}
			\E(\Delta_n|\mathcal{G}_{n-1})&=&\E(D_{n-1}\otimes d_n-d_n\otimes D_{n-1}|\mathcal{G}_{n-1})\\&=&D_{n-1}\otimes\E(d_n|\mathcal{G}_{n-1})-\E(d_n|\mathcal{G}_{n-1})\otimes D_{n-1}=0,
		\end{eqnarray*}
		where we use the fact that $(d_n)_n$ is a martingale difference sequence in the last equality. Therefore, $(\Delta_n)_n$ is also a martingale difference, so its partial sum sequence forms a martingale. By discrete Burkholder--Davis--Gundy inequality,
		\begin{equation}\label{BDGforDelta}
			\E|\sum_{n=1}^{N-1}\Delta_n|^p\leq C\E(\sum_{n=1}^{N-1}|\Delta_n|^2)^{p/2}.
		\end{equation}
		Since $\Delta_n=D_{n-1}\otimes d_n-d_n\otimes D_{n-1}$, the elementary inequality $|a\otimes b-b\otimes a|\leq \sqrt{2}|a|\cdot|b|$ implies
		\begin{equation*}
			|\Delta_n|\leq \sqrt{2}|D_{n-1}|\cdot|d_n|,
		\end{equation*}
		so
		\begin{equation*}
			\sum_{n=1}^{N-1}|\Delta_n|^2\leq 2\sum_{n=1}^{N-1}|D_{n-1}|^2|d_n|^2\leq 2\max_{0\leq k\leq N-2}|D_k|^2\sum_{n=1}^{N-1}|d_n|^2.
		\end{equation*}
		Taking $(p/2)$-th power and expectation,
		\begin{equation*}
			\E(\sum_{n=1}^{N-1}|\Delta_n|^2)^{p/2}\leq C_p\E[ \max_{0\leq k\leq N-2}|D_k|^p(\sum_{n=1}^{N-1}|d_n|^2)^{p/2}]. 
		\end{equation*}
	 By H\"older inequality,
	 \begin{equation}\label{noname21}
	 	\E(\sum_{n=1}^{N-1}|\Delta_n|^2)^{p/2}\leq C_p(\E\max_{0\leq k\leq N-2}|D_k|^{2p})^{1/2}\cdot (\E(\sum_{n=1}^{N-1}|d_n|^2)^p)^{1/2}
	 \end{equation}
	Since $(d_n)_n$ is a martingale difference sequence, its partial sum sequence $(D_n)_n$ is a martingale, and by Doob maximal inequality and \eqref{B32},
	\begin{equation}\label{noname22}
		\E\max_{0\leq k\leq N-2}|D_k|^{2p}\leq C_p\E|D_{N-2}|^{2p}\leq C_p N^p.
	\end{equation}
	By Minkowski inequality and stationarity of $(d_n)_n$,
	\begin{equation}\label{noname23}
		\E(\sum_{n=1}^{N-1}|d_n|^2)^p=\|\sum_{n=1}^{N-1}|d_n|^2\|_p^p\leq (\sum_{n=1}^{N-1}\||d_n|^2\|_p)^p= C_p(N-1)^p\leq C_p N^p.
	\end{equation}	
		Substitute \eqref{noname22} and \eqref{noname23} into \eqref{noname21}, 
		\begin{equation*}
			\E(\sum_{n=1}^{N-1}|\Delta_n|^2)^{p/2}\leq C_p N^{p/2}\cdot C_pN^{p/2}=C_p N^p.
		\end{equation*}
		Substitute the equality above into \eqref{BDGforDelta},
		\begin{equation*}
			\E|\sum_{n=1}^{N-1}\Delta_n|^p\leq C_p N^p,
		\end{equation*}
		and from the definition of $A_3$, this means precisely 
		\begin{equation}\label{A3}
			\|A_3\|_p\leq C_p N.
		\end{equation}
		Combining \eqref{newA}, \eqref{A1},\eqref{A2} and \eqref{A3},
		\begin{equation}\label{A}
			\|A\|_p\leq C_pN.
		\end{equation}
		Combining \eqref{D},\eqref{C},\eqref{B} and \eqref{A},
		\begin{equation}\label{I2}
			\|I_2\|_p\leq CN.
		\end{equation}
		Combining \eqref{noname02} and \eqref{I2}, \eqref{target2} is proved for $N\in\mathbb{N}$.

		For general $N\geq1$, note that
		\begin{eqnarray*}
			&&\int_0^N\int_0^v\xi(u)\otimes\xi(v)dudv\\&=&\int_0^{\lfloor N\rfloor}\int_0^v\xi(u)\otimes\xi(v)dudv+\int_{\lfloor N\rfloor}^N\int_0^{\lfloor N\rfloor}\xi(u)\otimes\xi(v)dudv+\int_{\lfloor N\rfloor}^N\int_{\lfloor N\rfloor}^v\xi(u)\otimes\xi(v)dudv\\&=:&K_1+K_2+K_3.
		\end{eqnarray*}
		From the integer case, $$\E|K_1|^p\leq C\lfloor N\rfloor^p\leq CN^p.$$
		By (i) of Proposition \ref{prop-momentestimateformixing},
		\begin{eqnarray*}
			\|K_2\|_p&=&\Big\|\int_0^{\lfloor N\rfloor}\xi(u)du\otimes\int_{\lfloor N\rfloor}^N\xi(v)dv\Big\|_p\\&\leq&\Big\|\int_0^{\lfloor N\rfloor}\xi(u)du\Big\|_{2p}\cdot\Big\|\int_{\lfloor N\rfloor}^N\xi(v)dv\Big\|_{2p}\\&\leq&(C_pN^{1/2})\cdot\int_{\lfloor N\rfloor}^N\|\xi(v)\|_{2p}dv\\&\leq&C_p N^{1/2}\leq C_p N.
		\end{eqnarray*}
		By stationarity and \eqref{small2},
		\begin{eqnarray*}
			\|K_3\|_p\leq C_p N.
		\end{eqnarray*}
		Combining the estimate for $K_1$, $K_2$ and $K_3$, we see that \eqref{target2} holds for all $N\geq 1$, and the proof is finished. 
	\end{proof}
	\noindent\emph{Proof of Proposition \ref{prop-momentestimate}:}
	
Fix $0\leq s<t\leq T.$ We first consider $\xe$.

\noindent\textbf{Case 1: $\ep\geq t-s $.}
Writing $\xe_{s,t}=\int_s^t\ve_rdr$, by H\"older inequality,
\begin{equation*}
	|\xe_{s,t}|^p\leq (t-s)^{p-1}\int_s^t|\ve_r|^pdr.
\end{equation*}
Taking expectation and using Proposition \ref{prop-momentestimateforv},
\begin{equation*}
	\E|\xe_{s,t}|^p\leq (t-s)^{p-1}\int_s^t\E|\ve_r|^pdr\lesssim(t-s)^{p-1}\int_s^t\ep^{-p/2}dr=(t-s)^p\ep^{-p/2}\leq (t-s)^{p/2}.
\end{equation*}

\noindent\textbf{Case 2: $0<\ep\leq t-s$.}
\begin{eqnarray}\label{newxe}
	&&\xe_{s,t}\nonumber\\&=&\frac{1}{\sqrt{\ep}}\int_s^t\we_rdr\nonumber\\&=&\frac{1}{\sqrt{\ep}}\int_s^t\ze_rdr+\frac{1}{\sqrt{\ep}}\int_s^tR^\ep_rdr\nonumber\\&=&\frac{1}{\sqrt{\ep}}\int_s^t\int_0^{r/\ep}e^{-u}\eta(\frac{r}{\ep}-u)dudr+\int_s^t\int_0^{r/\ep} e^{-u}F(\xe_{r-\ep u})dudr\nonumber\\&=:&H_1+H_2.
\end{eqnarray}
Since $F$ is bounded,
\begin{equation*}
	|H_2|\leq\|F\|_\infty\int_s^t\int_0^{r/\ep}e^{-u}dudr\leq\int_s^t 1dr= t-s,
\end{equation*}
and taking expectation,
\begin{equation}\label{H2}
	\E|H_2|^p\lesssim (t-s)^p\lesssim_T (t-s)^{p/2}
\end{equation}
By change-of-variable,
\begin{equation*}
	H_1=\sqrt{\ep}\int_{s/\ep}^{t/\ep}\int_0^v e^{-(v-w)}\eta(w)dwdv.
\end{equation*}
Set $\xi(v):=\int_0^v e^{-(v-w)}\eta(w)dw$, $\xi^*(v):=\int_{-\infty}^v e^{-(v-w)}\eta(w)dw$, then $\xi(v)=\xi^*(v)-e^{-v}\xi^*(0),$ and
\begin{equation}\label{newH1}
	H_1=\sqrt{\ep}\int_{s/\ep}^{t/\ep}\xi^*(v)dv-\sqrt{\ep}\xi^*(0)\int_{s/\ep}^{t/\ep}e^{-v}dv=:H_{11}-H_{12}.
\end{equation}
Since $\ep\leq t-s$, $\frac{t}{\ep}-\frac{s}{\ep}\geq 1$, so an application of Proposition \ref{prop-momentestimateformixing} (i) gives
\begin{equation}\label{H11}
	\E|H_{11}|^p= \ep^{p/2}\E|\int_{s/\ep}^{t/\ep}\xi^*(v)dv|^p\leq \ep^{p/2}C_p|\frac{t-s}{\ep}|^{p/2}=C_p|t-s|^{p/2}.
\end{equation}
Since $|H_{12}|\leq\sqrt{\ep}|\xi^*(0)|\int_0^\infty e^{-v}dv=\sqrt{\ep}|\xi^*(0)|\leq (t-s)^{1/2}|\xi^*(0)|,$
\begin{equation}\label{H12}
	\E|H_{12}|^p\leq (t-s)^{p/2}\E|\xi^*(0)|^p\leq C_p(t-s)^{p/2}.
\end{equation}
Substitute \eqref{H11}--\eqref{H12} into \eqref{newH1},
\begin{equation}\label{H1}
	\E|H_1|^p\leq C_p(t-s)^{p/2}.
\end{equation}
Substituting \eqref{H2} and \eqref{H1} into \eqref{newxe},
\begin{equation*}
	\E|\xe_{s,t}|^p\leq C_p (t-s)^{p/2}.
\end{equation*}

Next we consider $\bbxe$.

\noindent\textbf{Case 1: $\ep\geq t-s$.}

By H\"older inequality,
\begin{eqnarray*}
\E|\int_s^t\xe_{s,r}\otimes d\xe_r|^p=\E|\int_s^t\xe_{s,r}\otimes\ve_r dr|^p\leq (t-s)^{p-1}\E\int_s^t|\xe_{s,r}\otimes\ve_r|^pdr.
\end{eqnarray*}
By Proposition \ref{prop-momentestimateforv} and (i) of Proposition \ref{prop-momentestimate}, 
\begin{eqnarray*}
	\E\int_s^t|\xe_{s,r}\otimes\ve_r|^pdr\leq\int_s^t(\E|\xe_{s,r}|^{2p})^{1/2}(\E|\ve_r|^{2p})^{1/2}dr\leq \int_s^t|r-s|^{p/2}\ep^{-p/2}dr\lesssim_p \ep^{-p/2}(t-s)^{(p/2)+1}.
\end{eqnarray*}
Combining two inequalities above,
\begin{equation}
	\E\int_s^t|\xe_{s,r}\otimes\ve_r|^pdr\leq \ep^{-p/2}(t-s)^{(3p)/2}\leq (t-s)^p.
\end{equation}

\noindent\textbf{Case 2: $0\leq\ep<t-s$.}
\begin{eqnarray}
	\bbxe_{s,t}&=&\int_s^t\xe_{s,r}\otimes d\xe_r\nonumber\\&=&\frac{1}{\ep}\int_s^t\int_s^r\we_q\otimes\we_rdqdr\nonumber\\&=&\frac{1}{\ep}\int_s^t\int_s^r\ze_q\otimes\ze_rdqdr+\frac{1}{\ep}\int_s^t\int_s^r\ze_q\otimes R^\ep_rdqdr\nonumber\\&&+\frac{1}{\ep}\int_s^t\int_s^rR^\ep_q\otimes\ze_rdqdr+\frac{1}{\ep}\int_s^t\int_s^rR^\ep_q\otimes R^\ep_rdqdr\nonumber\\&=:&I_1+I_2+I_3+I_4.
\end{eqnarray}
We first deal with $I_2$. Since
\begin{equation*}
	\ze_q=\int_0^{q/\ep}e^{-u}\eta(\frac{q}{\ep}-u)=\int_0^{q/\ep} e^{-(\frac{q}{\ep}-v)}\eta(v)dv=\xi(q/\ep),
\end{equation*}
we have
\begin{eqnarray*}
	I_2&=&\int_s^t\int_s^r\ze_q\otimes R^\ep_rdqdr\\&=&\frac{1}{\ep}\int_s^t\int_s^r\xi(q/\ep)\otimes \ep^{1/2}\int_0^{r/\ep}e^{-u}F(\xe_{r-\ep u})dudqdr\\&=&\frac{1}{\sqrt{\ep}}\int_s^t\int_s^r\xi(q/\ep)dq\otimes \int_0^{r/\ep}e^{-u}F(\xe_{r-\ep u})dudr.
\end{eqnarray*}
Since $F$ is bounded,
\begin{eqnarray*}
	|I_2|&\leq&\frac{1}{\sqrt{\ep}}\int_s^t\Big|\int_s^r\xi(q/\ep)dq\Big|\cdot \Big|\int_0^{r/\ep}e^{-u}F(\xe_{r-\ep u})du\Big|dr\\&\leq&\frac{1}{\sqrt{\ep}}\int_s^t\Big|\int_s^r\xi(q/\ep)dq\Big|dr\\&=&\ep^{3/2}\int_{s/\ep}^{t/\ep}\Big|\int_{s/\ep}^r\xi(q)dq\Big|dr,
\end{eqnarray*}
where we made change-of-variable $q/\ep\mapsto q$ and then $r/\ep\mapsto r$ in the last equality, so
\begin{equation}\label{newI2}
	\|I_2\|_p\leq \ep^{3/2}\Big\|\int_{s/\ep}^{t/\ep}|\int_{s/\ep}^r\xi(q)dq|dr\Big\|_p\leq \ep^{3/2}\int_{s/\ep}^{t/\ep} \Big\|\int_{s/\ep}^r\xi(q)dq\Big\|_pdr.
\end{equation}
But
\begin{eqnarray*}
	&&\Big\|\int_{s/\ep}^r\xi(q)dq\Big\|_p\\&=&\Big\|\int_{s/\ep}^r\xi^*(q)-e^{-q}\xi^*(0)dq\Big\|_p\\&\leq&\Big\|\int_{s/\ep}^r\xi^*(q)dq\Big\|_p+\Big\|\int_{s/\ep}^re^{-q}\xi^*(0)dq\Big\|_p\\&\leq&C_p(r-\frac{s}{\ep})^{1/2}+C_p\int_{s/\ep}^r e^{-q}dq\\&\leq& C_p(r-\frac{s}{\ep})^{1/2}+C_p,
\end{eqnarray*}
where we apply (i) of Proposition \ref{prop-momentestimateformixing} in the second-last step. Substitute into \eqref{newI2},
\begin{equation*}
	\|I_2\|_p\leq \ep^{3/2}\int_{s/\ep}^{t/\ep} C_p[(r-\frac{s}{\ep})^{1/2}+1]dr\lesssim_p \ep^{3/2}(\frac{t-s}{\ep})^{3/2}+\ep^{3/2}(\frac{t-s}{\ep})=(t-s)^{3/2}+\ep^{1/2}(t-s).
\end{equation*}
Recall the assumption $\ep\leq t-s$, so that
\begin{equation*}
	\|I_2\|_p\leq C_p (t-s)^{3/2}\leq C_{p,T} (t-s).
\end{equation*}
By the same argument
\begin{equation*}
	\|I_3\|_p\leq C_{p,T}(t-s).
\end{equation*}
Plainly, \eqref{esforre} implies
\begin{equation*}
	|I_4|\leq\frac{1}{\ep}\int_s^t\int_s^r|R^\ep_q|\cdot|R^\ep_r|dqdr\lesssim\int_s^t\int_s^r dqdr=\frac{(t-s)^2}{2}\lesssim_T t-s.
\end{equation*}
Taking expectation,
\begin{equation*}
	\|I_4\|_p\leq C_T(t-s).
\end{equation*}
Lastly we deal with $I_1$. By change-of-variable,
\begin{eqnarray*}
	&&I_1\nonumber\\&=&\frac{1}{\ep}\int_s^t\int_s^r[\int_0^{q/\ep}e^{-u}\eta(\frac{q}{\ep}-u)du\otimes\int_0^{r/\ep}e^{-v}\eta(\frac{r}{\ep}-v)]dqdr\nonumber\\&=&\ep\int_{s/\ep}^{t/\ep}\int_{s/\ep}^r[\int_0^q e^{-u}\eta(q-u)du\otimes \int_0^r e^{-v}\eta(r-v)dv]dqdr\nonumber\\&=&\ep\int_{s/\ep}^{t/\ep}\int_{s/\ep}^r[\int_0^q e^{-(q-u)}\eta(u)du\otimes \int_0^r e^{-(r-v)}\eta(v)dv]dqdr\nonumber\\&=:&\ep\int_{s/\ep}^{t/\ep}\int_{s/\ep}^r \xi(q)\otimes\xi(r)dqdr,
\end{eqnarray*}
where we recall that $\xi(q)=\int_0^q e^{-(q-u)}\eta(u)du,$ and $\xi^*(q):=\int_{-\infty}^q e^{-(q-u)}\eta(u)du.$
Set
\begin{equation}\label{newI1}
	I_1^*:=\ep\int_{s/\ep}^{t/\ep}\int_{s/\ep}^r \xi^*(q)\otimes\xi^*(r)dqdr.
\end{equation}
   Since $\xi(q)=\xi^*(q)-e^{-q}\xi^*(0),$ we have
   \begin{eqnarray}\label{difference}
   	&&I_1-I_1^*\nonumber\\&=&-\ep\int_{s/\ep}^{t/\ep}\int_{s/\ep}^r\xi^*(q)\otimes[e^{-r}\xi^*(0)]dqdr-\ep\int_{s/\ep}^{t/\ep}\int_{s/\ep}^r e^{-q}\xi^*(0)\otimes\xi^*(r)dqdr\nonumber\\&&+\ep\int_{s/\ep}^{t/\ep}\int_{s/\ep}^r e^{-q}\xi^*(0)\otimes e^{-r}\xi^*(0)dqdr=:T_1+T_2+T_3.
   \end{eqnarray}
   Since
   \begin{equation*}
   	T_1=-\ep\int_{s/\ep}^{t/\ep}e^{-r}\int_{s/\ep}^r\xi^*(q)dqdr\otimes\xi^*(0),
   \end{equation*}
   we have
   \begin{equation*}
   	|T_1|\leq\ep\int_{s/\ep}^{t/\ep}e^{-r}|\int_{s/\ep}^r\xi^*(q)dq|dr\cdot|\xi^*(0)|.
   \end{equation*}
   Taking $L^p$-norm, applying H\"older inequality, Proposition \ref{prop-momentestimateformixing} (i), and recalling $\ep<t-s$,
   \begin{eqnarray}\label{T1}
   	&&\|T_1\|_p\nonumber\\&\leq&\ep\int_{s/\ep}^{t/\ep} e^{-r}\|\int_{s/\ep}^{r}\xi^*(q)dq\|_{2p}dr\cdot\|\xi^*(0)\|_{2p}\nonumber\\&\leq&C_p\ep\int_{s/\ep}^{t/\ep} e^{-r}(r-\frac{s}{\ep})^{1/2}dr\nonumber\\&=&C_p\ep e^{-s/\ep}\int_0^{(t-s)/\ep}r^{1/2}e^{-r}dr\nonumber\\&\leq&C_p\ep\leq C_p (t-s).
   \end{eqnarray}
   Direct computation gives
   \begin{eqnarray}
   	&&T_2\nonumber\\&=&-\ep e^{-s/\ep}\xi^*(0)\otimes\int_{s/\ep}^{t/\ep}\xi^*(r)dr+\ep\xi^*(0)\otimes\int_{s/\ep}^{t/\ep}\xi^*(r)e^{-r}dr\nonumber\\&=:&T_{21}+T_{22}.
   \end{eqnarray}
   By H\"older inequality and Proposition \ref{prop-momentestimateformixing} (i),
   \begin{eqnarray}\label{T21}
   	\|T_{21}\|_p&\leq&\ep e^{-s/\ep}\|\xi^*(0)\|_{2p}\cdot\|\int_{s/\ep}^{t/\ep}\xi^*(r)dr\|_{2p}\nonumber\\&\leq&C_p\ep e^{-s/\ep}(\frac{t-s}{\ep})^{1/2}\nonumber\\&\leq&C_p\ep^{1/2}e^{-s/\ep}(t-s)^{1/2}\leq C_p(t-s).
   \end{eqnarray}
   By H\"older inequality, triangular inequality and stationarity,
   \begin{eqnarray}\label{T22}
   	\|T_{22}\|_p\leq\ep\|\xi^*(0)\|_{2p}\cdot\int_{s/\ep}^{t/\ep}\|\xi^*(0)\|_{2p}e^{-r}dr\leq C_p\ep\leq C_p(t-s).
   \end{eqnarray}
   \eqref{T21} and \eqref{T22} implies
   \begin{equation}\label{T2}
   	\|T_2\|_p\leq C_p(t-s)
   \end{equation}
\begin{eqnarray*}
&&T_3\\&=&\ep\int_{s/\ep}^{t/\ep}\int_{s/\ep}^r e^{-q}\xi^*(0)\otimes e^{-r}\xi^*(0)dqdr\\&=&\ep\int_{s/\ep}^{t/\ep}\int_{s/\ep}^re^{-q}e^{-r}dqdr\xi^*(0)\otimes\xi^*(0)\\&=&[\ep e^{-\frac{s}{\ep}}(e^{-\frac{s}{\ep}}-e^{-\frac{t}{\ep}})-\frac{\ep}{2}(e^{-\frac{2s}{\ep}}-e^{-\frac{2t}{\ep}})]\xi^*(0)\otimes\xi^*(0),
\end{eqnarray*}
but
$$|\ep e^{-\frac{s}{\ep}}(e^{-\frac{s}{\ep}}-e^{-\frac{t}{\ep}})|\leq\ep e^{-\frac{s}{\ep}}|\frac{t-s}{\ep}|\leq|t-s|,$$
$$|\frac{\ep}{2}(e^{-\frac{2s}{\ep}}-e^{-\frac{2t}{\ep}})|\leq\frac{\ep}{2}|\frac{2(t-s)}{\ep}|\leq|t-s|,$$
so
\begin{equation}\label{T3}
	\E|T_3|^p\lesssim|t-s|^p\E|\xi^*(0)\otimes\xi^*(0)|^p\leq C_p|t-s|^p.
\end{equation}
Combining \eqref{difference} \eqref{T1}, \eqref{T2} and \eqref{T3}, 
\[\|I_1-I_1^*\|_p\leq C|t-s|.\]
This implies that, to show $\E|I_1|^p\leq C_p|t-s|^p$, it suffices to show $\E|I_1^*|^p\leq C_p|t-s|^p.$ Indeed, by (ii) of Proposition \ref{prop-momentestimateformixing}  and \eqref{newI1},
\begin{eqnarray*}
	&&\E|I_1^*|^p\\&\leq&\ep^p\E|\int_{s/\ep}^{t/\ep}\int_{s/\ep}^r \xi^*(q)\otimes\xi^*(r)dqdr|^p\\&\leq&\ep^pC_p(\frac{t-s}{\ep})^p=C_p(t-s)^p.
\end{eqnarray*}
This finishes the proof for $\bfxe.$ The proof for $\bfue$ is finished by the same and easier argument, by noting that
\begin{eqnarray}
	\bbue_{s,t}&=&\int_s^t\ue_{s,r}\otimes d\ue_r\nonumber\\&=&\frac{1}{\ep}\int_s^t\int_s^r\eta(u/\ep)\otimes\eta(r/\ep)dudr\nonumber\\&=&\ep\int_{s/\ep}^{t/\ep}\int_{s/\ep}^{r}\eta(u)\otimes\eta(r)dudr.
\end{eqnarray}		 
	\qed
	
	As a corollary of Kolmogorov tightness criterion for rough paths \cite[Thm 3.10]{FH20}, we have
	\begin{coro}\label{coro-tight} For each $1/3<\alpha<1/2,$
		$(\bfue)_{0<\ep\leq 1}$ and $(\bfxe)_{0<\ep\leq 1}$ are tight in $\mathscr{C}^{0,\al}([0,T];\mathbb{R}^d)$.
	\end{coro}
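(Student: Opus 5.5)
The plan is to apply the Kolmogorov criterion for rough paths \cite[Thm 3.10]{FH20} directly, with the moment bounds of Proposition \ref{prop-momentestimate} as input. That criterion says the following. Suppose a family of random rough paths $\mathbf{Y}=(Y,\mathbb{Y})$ satisfies, for some $q\geq2$ and $\beta>1/q$,
\begin{equation*}
\|Y_{s,t}\|_q\leq C|t-s|^{\beta},\qquad \|\mathbb{Y}_{s,t}\|_{q/2}\leq C|t-s|^{2\beta},
\end{equation*}
for all $(s,t)\in\Delta_T$. Then for every $\gamma<\beta-1/q$ we have
\begin{equation*}
\E\Big[\|Y\|_{\gamma-\text{H\"ol}}^q\Big]+\E\Big[\|\mathbb{Y}\|_{2\gamma-\text{H\"ol}}^{q/2}\Big]\leq C',
\end{equation*}
where $C'$ depends only on $C,q,\beta,\gamma,T$.

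First we check the hypotheses. Take $\beta=1/2$ and fix $\al\in(1/3,1/2)$. Choose $\al'\in(\al,1/2)$, and then $q$ so large that $1/2-1/q>\al'$. Proposition \ref{prop-momentestimate}, applied with $p=q$ and with $p=q/2$, gives the two required bounds uniformly in $\ep\in(0,1]$, for both $\bfue$ and $\bfxe$. Since $\xe$ and $\ue$ are $C^1$, their canonical lifts satisfy Chen's relation, so they are genuine rough paths. Consequently $\sup_{\ep}\E\rho_{\al'}(\bfue,0)^{q/2}<\infty$, where $\rho_{\al'}(\cdot,0)$ denotes the homogeneous rough path norm. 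By Markov's inequality, for every $\delta>0$ there is $M$ with
\begin{equation*}
P\big(\|\ue\|_{\al'}+\|\bbue\|_{2\al'}>M\big)<\delta
\end{equation*}
for all $\ep$, and the same holds for $\bfxe$.

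Next we need compactness. Let $K_M$ be the set of rough paths $\mathbf{Y}$ with $Y_0=0$ and $\al'$-H\"older rough path norm at most $M$. We claim $K_M$ is relatively compact in $\mathscr{C}^{0,\al}$. First, $K_M$ is equicontinuous and bounded, so Arzelà--Ascoli applies to both levels. Second, the H\"older interpolation
\begin{equation*}
\|Y-\tilde Y\|_{\al}\leq \|Y-\tilde Y\|_{\infty}^{1-\al/\al'}\,\|Y-\tilde Y\|_{\al'}^{\al/\al'},
\end{equation*}
together with its second-level analogue, upgrades uniform convergence to convergence in $\rho_\al$. Third, $\mathscr{C}^{\al'}\subset\mathscr{C}^{0,\al}$, by the standard embedding \cite[Ex 2.8--2.9]{FH20}. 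Finally, the paths here are $C^1$, and their canonical lifts lie in $\mathscr{C}^{0,\al}$.

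The main point needing care is this embedding and compactness step. It is a standard fact from \cite{FH20}, so I would cite it rather than reprove it. The probabilistic content is entirely in Proposition \ref{prop-momentestimate}, which we take as given. Once compactness is in hand, tightness of $(\bfue)_\ep$ and $(\bfxe)_\ep$ in the Polish space $\mathscr{C}^{0,\al}$ follows.
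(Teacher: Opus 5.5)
Your proposal is correct and takes the paper's route. The paper obtains the corollary directly from the Kolmogorov criterion for rough paths \cite[Thm 3.10]{FH20}, applied to the uniform moment bounds of Proposition \ref{prop-momentestimate}; you have made explicit the standard steps behind that citation, namely a uniform moment bound on the $\al'$-H\"older rough path norm, Markov's inequality, and precompactness of $\al'$-bounded sets in $\mathscr{C}^{0,\al}$ via Arzel\`a--Ascoli, interpolation and the embedding $\mathscr{C}^{\al'}\subset\mathscr{C}^{0,\al}$.
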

	
	\begin{remark}
		By Corollary \ref{coro-tight} and Skorokhod embedding theorem, we assume without loss of generality that $\bfue=(\ue,\bbue)\to(U,\mathbb{U})=:\mathbf{U}$ and $\bfxe=(\xe,\bbxe)\to (X,\mathbb{X})=:\mathbf{X}$ in $\mathscr{C}^{0,\al}$ almost surely. In following sections, our main aim is to identify $\mathbf{U}$ and $\mathbf{X}$, and we always work with these a.s. convergent families.
	\end{remark}
	
	\section{Iterated Weak Invariance Principle for $\bfue$}\label{section-limitofU}
	
	  Now we identify $\mathbf{U}$. To this purpose, we fix an arbitrary test function $\vp\in C_c^\infty(\mathbb{R}^{d+d^2})$. We begin with two  propositions. 
	
	The first one is known as Davydov inequality \cite[Coro 16.2.4]{AL06}.
\begin{prop}\label{prop-davydov}
	Let $X, Y$ be two real random variables. Let $\al:=\al(\sigma(X),\sigma(Y))$. Suppose that $\E|X|^p\vee\E|Y|^q<\infty$, where $p,q>1$ and $\frac{1}{p}+\frac{1}{q}<1$. Set $\frac{1}{r}=1-\frac{1}{p}-\frac{1}{q}$, then
	$$|\text{Cov}(X,Y)|\leq 2r(2\al)^{1/r}\|X\|_p\|Y\|_q.$$
\end{prop}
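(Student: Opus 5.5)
The statement just above is the Davydov covariance inequality: $|\mathrm{Cov}(X,Y)|\le 2r(2\alpha)^{1/r}\|X\|_p\|Y\|_q$ with $1/r=1-1/p-1/q$ and $\alpha=\alpha(\sigma(X),\sigma(Y))$. The plan is the classical truncation argument, built on one basic bound for bounded variables. For $\sigma(X)$- and $\sigma(Y)$-measurable $\xi,\zeta$ with $|\xi|\le a$, $|\zeta|\le b$, I will show
\[
|\mathrm{Cov}(\xi,\zeta)|\le 4ab\,\alpha .
\]
To get this, write $\mathrm{Cov}(\xi,\zeta)=\E[\xi(\E(\zeta|\sigma(X))-\E\zeta)]$ and bound it by $a\,\E|\E(\zeta|\sigma(X))-\E\zeta|$. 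Set $\xi'=\mathrm{sgn}(\E(\zeta|\sigma(X))-\E\zeta)$, which is $\sigma(X)$-measurable. The quantity to control then equals $\mathrm{Cov}(\xi',\zeta)$. Repeating the trick on the $\zeta$ side reduces everything to covariances of $\pm1$-valued variables. These are combinations of indicator covariances $P(AB)-P(A)P(B)$, and a four-term split gives the factor $4\alpha$.

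Next comes the truncation. Fix levels $M,K>0$ and write $X=X_1+X_2$ with $X_1=X\mathbf 1_{|X|\le M}$, and similarly $Y=Y_1+Y_2$ at level $K$. Expanding $\mathrm{Cov}(X,Y)$ bilinearly gives four terms. The term $\mathrm{Cov}(X_1,Y_1)$ is at most $4MK\alpha$ by the bounded case. Each term containing a tail part is handled by H\"older with exponents $(p,q,r)$ together with Markov's inequality. For example,
\[
|\mathrm{Cov}(X_2,Y)|\le 2\|X_2\|_{p'}\|Y\|_q ,\qquad \tfrac1{p'}=1-\tfrac1q ,
\]
and
\[
\|X_2\|_{p'}\le \|X\|_p\,P(|X|>M)^{1/p'-1/p}\le \|X\|_p\,(\|X\|_p/M)^{p/r}.
\]
The mixed term $\mathrm{Cov}(X_1,Y_2)$ is treated symmetrically, using $|X_1|\le |X|$.

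The last step is to choose the levels to balance the pieces: take $M=\|X\|_p\,\alpha^{-1/p}$ and $K=\|Y\|_q\,\alpha^{-1/q}$. Each of the four pieces then becomes a constant times $\alpha^{1-1/p-1/q}\|X\|_p\|Y\|_q=\alpha^{1/r}\|X\|_p\|Y\|_q$. Summing gives a bound of the form $c\,\alpha^{1/r}\|X\|_p\|Y\|_q$.

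The main obstacle is bookkeeping the constants so that exactly $2r(2\alpha)^{1/r}$ comes out. For this I would follow the sharper route in \cite{AL06}. Instead of truncation one uses the quantile-function representation
\[
|\mathrm{Cov}(X,Y)|\le 2\int_0^{2\alpha}Q_X(u)Q_Y(u)\,du ,
\]
which is Rio's inequality. Applying H\"older on $[0,2\alpha]$ with $\int_0^1 Q_X^p=\E|X|^p$ and $\int_0^1 Q_Y^q=\E|Y|^q$ then yields a bound of the form $2(2\alpha)^{1/r}\|X\|_p\|Y\|_q$, which implies the stated one since $r>1$. Since the paper only uses the inequality up to constants (via $\lesssim$), the truncation proof suffices, and I would cite \cite{AL06} for the sharp constant.
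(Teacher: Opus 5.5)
The paper gives no argument for this proposition; it only quotes \cite[Coro 16.2.4]{AL06}. Your proposal is correct, but it matters which of your two routes proves what.

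The truncation argument is sound. The bounded case $|\mathrm{Cov}(\xi,\zeta)|\le 4ab\alpha$ follows from your two sign-variable reductions, and the tails are handled by H\"older and Markov. However, with your levels $M=\|X\|_p\alpha^{-1/p}$, $K=\|Y\|_q\alpha^{-1/q}$ and the split $\mathrm{Cov}(X_1,Y_1)+\mathrm{Cov}(X_1,Y_2)+\mathrm{Cov}(X_2,Y)$, it only gives $|\mathrm{Cov}(X,Y)|\le 8\alpha^{1/r}\|X\|_p\|Y\|_q$ (or $10$ if you keep all four terms). That is not the proposition as stated. The bound $8\le 2r\,2^{1/r}$ requires $r2^{1/r}\ge 4$, which fails for $r$ below about $3.2$. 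For example, at $r=2$ the stated coefficient is $4\sqrt2$, and it tends to $4$ as $r\downarrow 1$.

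So the stated constant comes only from your second route. There, Rio's bound $|\mathrm{Cov}(X,Y)|\le 2\int_0^{2\alpha}Q_X(u)Q_Y(u)\,du$, followed by H\"older with exponents $(p,q,r)$ on $[0,2\alpha]$, gives $2(2\alpha)^{1/r}\|X\|_p\|Y\|_q$. This is stronger than claimed, since $r>1$. All of the content there is Rio's inequality, which you cite rather than prove. For the literal statement you therefore stand where the paper does, resting on a citation, just to a sharper result.

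In the truncation you should also treat $\alpha=0$ separately. Your $M,K$ are then undefined; but the two $\sigma$-fields are independent, so the covariance vanishes.

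What the truncation buys is an elementary, self-contained proof with a universal constant. Your remark that this is all the paper needs is right. The inequality enters only through $\lesssim$: in the bound $|R^{ij}(s)|\lesssim m(s)^{1/4}$ of Section~\ref{section-preli}, and in Lemma~\ref{lemma-vanishing limit(2)}. Nothing downstream depends on the specific value $2r(2\alpha)^{1/r}$.
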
	
The second one is crucial in constructing martingale.
\begin{prop}[{\cite[Prop 7.6 in Chapter 2]{EK86}}]\label{prop-kurtz}
	Given a probability space $(\Omega,\F,(\F_t)_t, P)$ with $(\F_t)$ a complete filtration. Let $\mathcal{L}$ be the set consisting of all progressive random processes $Y$ such that $\sup\limits_{0\leq t\leq T}\E|Y(t)|<\infty.$ Given $Y,Z\in\mathcal{L}$. If $$\{h^{-1}\E[(Y(t+h)-Y(t))|\F_t]:0<h<T,0<t\leq T-h\}$$ is uniformly integrable, and $$\lim\limits_{h\to 0}h^{-1}\E[(Y(t+h)-Y(t))|\F_t]=Z(t)$$ in probability for a.e. $t\in [0,T]$, then $$t\mapsto Y(t)-\intot Z(s)ds$$ is an $(\F_t)_t$ martingale.
\end{prop}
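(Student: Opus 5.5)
The plan is to verify the martingale property directly: for $0\le s<t\le T$ I will show
\[
\E\Big[Y(t)-Y(s)-\int_s^t Z(u)\,du\,\Big|\,\F_s\Big]=0 .
\]
Adaptedness and integrability of $t\mapsto Y(t)-\int_0^t Z(u)du$ come first. The process $Y$ is progressive. The integral $\int_0^t Z(u)du$ is $\F_t$-measurable because $Z$ is progressive, and it is integrable because $\sup_u\E|Z(u)|<\infty$. So only the conditional-mean identity needs work.

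\textbf{Step 1: the telescoping identity.} Fix $s<t<T$ and take $h$ small enough that $t+h\le T$. Write $Q_h(u):=h^{-1}\E[Y(u+h)-Y(u)\,|\,\F_u]$. Fubini gives the pathwise identity
\[
h^{-1}\int_s^t\big(Y(u+h)-Y(u)\big)du=h^{-1}\int_t^{t+h}Y(u)\,du-h^{-1}\int_s^{s+h}Y(u)\,du .
\]
Take $\E[\,\cdot\,|\F_s]$ of both sides and use the tower property inside the $du$-integral. This is legitimate for $u\ge s$ by conditional Fubini, since everything is in $L^1$ uniformly. The left side becomes $\int_s^t\E[Q_h(u)\,|\,\F_s]\,du$.

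\textbf{Step 2: the limit of the left side.} By hypothesis $Q_h(u)\to Z(u)$ in probability for a.e. $u$, and the family $\{Q_h(u)\}$ is uniformly integrable. Vitali's theorem then gives $Q_h(u)\to Z(u)$ in $L^1$ for a.e. $u$. Uniform integrability also gives a bound $\sup_{h,u}\E|Q_h(u)|=:C<\infty$. Dominated convergence in $u$ therefore yields
\[
\int_s^t \E[Q_h(u)|\F_s]du\to\E\Big[\int_s^tZ(u)du\,\Big|\,\F_s\Big]\quad\text{in }L^1 .
\]

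\textbf{Step 3: the boundary terms.} This is the step I expect to be the main obstacle, because $Y$ is not assumed right-continuous, so $h^{-1}\int_t^{t+h}Y\,du\to Y(t)$ is not available pathwise. I will instead use the same bound $C$ to get continuity of conditional means. For $r\ge s$ and $0<v\le h$,
\[
\big\|\E[Y(r+v)-Y(r)\,|\,\F_s]\big\|_1=\big\|\E\big[\E[Y(r+v)-Y(r)|\F_r]\,\big|\,\F_s\big]\big\|_1\le v\,\|Q_v(r)\|_1\le Cv .
\]
Hence
\[
\Big\|\E\Big[h^{-1}\int_r^{r+h}Y(u)du\,\Big|\,\F_s\Big]-\E[Y(r)|\F_s]\Big\|_1\le h^{-1}\int_0^h Cv\,dv\to0 .
\]
Applying this with $r=t$ and with $r=s$ shows that the right side of Step 1 converges in $L^1$ to $\E[Y(t)-Y(s)|\F_s]$.

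\textbf{Step 4: conclusion.} Combining Steps 1–3 gives the martingale identity for all $t<T$. The endpoint $t=T$ is handled by a small adjustment. Replace the forward window with the backward one $h^{-1}\int_{t-h}^{t}Y\,du$, using the identity $\int_s^{t-h}(Y(u+h)-Y(u))du=\int_{t-h}^tY-\int_s^{s+h}Y$. By the same estimate, $\|\E[Y(t)-Y(t-v)|\F_s]\|_1\le Cv$ for $t-v\ge s$, so the backward average also converges to $\E[Y(t)|\F_s]$ in $L^1$.
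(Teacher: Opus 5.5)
The paper does not prove this proposition; it only cites Ethier--Kurtz, who phrase the result through the semigroup of conditioned shifts $\mathcal{T}(u)Y(t)=\E[Y(t+u)|\F_t]$. Apart from one point, your argument is sound and is the standard proof. Your Steps 1--3 are the usual averaging proof of $\mathcal{T}(u)Y-Y=\int_0^u\mathcal{T}(r)Z\,dr$. The Lipschitz bound $\|\E[Y(r+v)-Y(r)|\F_s]\|_1\le Cv$, which comes from $L^1$-boundedness of the uniformly integrable family, replaces strong continuity of the semigroup. Vitali plus dominated convergence in Step 2 and the backward window at $t=T$ are both fine.

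The point you pass over is $s=0$. In Step 3 you apply the Lipschitz bound with $r=s$, which needs $Q_v(s)$ to belong to the uniformly integrable family. The family in the statement is indexed by $0<t\le T-h$, so for $s=0$ the bound $\|Q_v(0)\|_1\le C$ is not available.

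No cleverer argument can fill this gap, because as literally stated the proposition is false at time $0$. Take the trivial filtration, $Z\equiv 0$, $Y(0)=1$ and $Y(t)=0$ for $t>0$. Every member of the family vanishes and the convergence holds for all $t\in(0,T)$, yet $\E[Y(t)|\F_0]=0\neq Y(0)$. So your proof establishes the martingale property only on $(0,T]$. To reach time $0$ you must also assume the uniform integrability at $t=0$ (or merely $\sup_{0<v<T}\|Q_v(0)\|_1<\infty$), and the result should be stated that way.

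This is harmless for the paper, since Lemma~\ref{lemma-UI} gives bounds uniform over $0\le t\le T$. You should still state this hypothesis explicitly rather than use it silently.
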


\begin{lemma}\label{lemma-finding Z(1)}
	For each $0<\ep\leq 1$, let $\pe: [0,\infty)\times [0,\infty)\to L^1(\Omega,\F,P)$ be a two-parameter real random process such that $\pe(s,t)\in\F^{(t/\ep)\vee(s/\ep)}$ for all $s,t\geq 0$. Assume that for each $T>0,$
	\begin{equation}\label{assumptionforlemma1}
		\sup\limits_{0\leq s<\infty,0\leq t\leq T}\E|\partial_t\pe(s,t)|\leq C_{\ep,T},
	\end{equation}
	and let $\ye(t):=\int_t^\infty\E(\pe(s,t)|\F^{t/\ep})ds$. Then for a.e. $t\in [0,T],$
	\begin{eqnarray*}
		\lim\limits_{h\to 0}\frac{1}{h}\E[\ye(t+h)-\ye(t)|\F^{t/\ep}]=\int_t^\infty \E(\partial_t\pe(s,t)|\F^{t/\ep})ds-\pe(t,t)
	\end{eqnarray*}
	in probability.
\end{lemma}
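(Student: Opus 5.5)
We will compute the conditional increment of $\ye$ directly and split it into a part coming from the moving lower limit of integration and a part coming from the $t$-dependence of the integrand. Write $\mathcal{G}_t:=\F^{t/\ep}$ and fix $h>0$ (the case $h<0$ is symmetric, or one restricts to right limits as needed in Proposition \ref{prop-kurtz}). By the tower property, $\E[\E(\pe(s,t+h)|\mathcal{G}_{t+h})|\mathcal{G}_t]=\E(\pe(s,t+h)|\mathcal{G}_t)$. Hence
\begin{equation*}
\E[\ye(t+h)-\ye(t)|\mathcal{G}_t]=\int_{t+h}^\infty\E\big(\pe(s,t+h)-\pe(s,t)\big|\mathcal{G}_t\big)ds-\int_t^{t+h}\E(\pe(s,t)|\mathcal{G}_t)ds=:J_1(h)+J_2(h).
\end{equation*}

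\textbf{The term $J_2$.} For $s\le t$ we have $\pe(s,t)\in\F^{t/\ep}$, so for $s\in[t,t+h]$ the quantity $\E(\pe(s,t)|\mathcal{G}_t)$ is close to $\pe(t,t)$. We would write
\[
h^{-1}J_2(h)+\pe(t,t)=-h^{-1}\int_t^{t+h}\E\big(\pe(s,t)-\pe(t,t)\big|\mathcal{G}_t\big)ds.
\]
This tends to zero in $L^1$ as long as $s\mapsto\pe(s,t)$ is $L^1$-continuous from the right at $s=t$. In the intended application this continuity is automatic. Otherwise we would invoke a Lebesgue differentiation argument: $s\mapsto\pe(s,t)\in L^1$ is Bochner measurable, so the averages converge for a.e. $t$ on the diagonal. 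We would record this as the implicit regularity needed, which is where the ``a.e. $t$'' in the statement comes from.

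\textbf{The term $J_1$.} By the fundamental theorem of calculus in $t$,
\[
\pe(s,t+h)-\pe(s,t)=\int_t^{t+h}\partial_t\pe(s,\tau)d\tau.
\]
Inserting this and using Fubini for conditional expectations gives
\[
h^{-1}J_1(h)=h^{-1}\int_t^{t+h}\int_{t+h}^\infty\E\big(\partial_t\pe(s,\tau)\big|\mathcal{G}_t\big)ds\,d\tau.
\]
We then want to show that this converges in $L^1$ (hence in probability) to $\int_t^\infty\E(\partial_t\pe(s,t)|\mathcal{G}_t)ds$. Conditional expectation is an $L^1$-contraction, so it suffices to control
\[
\int_t^\infty\|\partial_t\pe(s,\tau)-\partial_t\pe(s,t)\|_1ds
\]
for $\tau\in[t,t+h]$, together with the small strip $s\in[t,t+h]$. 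The strip contributes at most $h\,C_{\ep,T}$ by \eqref{assumptionforlemma1}.

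\textbf{Main obstacle.} The difficulty is integrability over the infinite range $s\in[t,\infty)$. Hypothesis \eqref{assumptionforlemma1} is only a uniform-in-$s$ bound, not an integrable one, so we need the map $\tau\mapsto\int_t^\infty\E(\partial_t\pe(s,\tau)|\mathcal{G}_t)ds$ to be a well-defined $L^1$-valued function. The plan is to use the very definition of $\ye$, which presupposes that $s\mapsto\E(\pe(s,t)|\F^{t/\ep})$ is integrable. In applications this comes from mixing decay via Proposition \ref{prop-davydov}: it gives $\|\E(\pe(s,t)|\mathcal{G}_t)\|_1\lesssim m((s-t)/\ep)^{\theta}$, and similarly for $\partial_t\pe$. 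With that, the map $\Psi(\tau):=\int_t^\infty\E(\partial_t\pe(s,\tau)|\mathcal{G}_t)ds$ lies in $L^1_{loc}([0,T];L^1(\Omega))$. Lebesgue's differentiation theorem for Bochner integrals then yields $h^{-1}\int_t^{t+h}\Psi(\tau)d\tau\to\Psi(t)$ in $L^1(\Omega)$ for a.e. $t$, which is again the source of the ``a.e.'' qualifier. Combining the limits of $h^{-1}J_1$ and $h^{-1}J_2$ gives the claimed limit in probability.
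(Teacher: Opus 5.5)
Your proposal is correct and is essentially the paper's proof. It uses the same tower-property split into the lower-limit term and the $t$-increment term (the paper's $B^\ep_t(h)$ and $A^\ep_t(h)$), the same fundamental theorem of calculus with conditional Fubini, the same bound $hC_{\ep,T}$ on the strip $s,\tau\in[t,t+h]$ via \eqref{assumptionforlemma1}, and Lebesgue differentiation for the two remaining averages. Your observation that integrability of $s\mapsto\E(\partial_t\pe(s,\tau)|\F^{t/\ep})$ on $[t,\infty)$ does not follow from \eqref{assumptionforlemma1} is accurate, since the paper's Fubini step and the statement of the limit itself use it silently, so importing it from mixing decay as you propose is the right repair.
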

	\begin{proof}
	By conditional Fubini theorem and tower property,
	\begin{eqnarray}\label{noname4}
		&&\E[\ye(t+h)-\ye(t)|\F^{t/\ep}]\nonumber\\&=&\E\Big[\int_{t+h}^\infty\E(\pe(s,t+h)|\F^{(t+h)/\ep})ds-\int_{t}^\infty\E(\pe(s,t)|\F^{t/\ep})ds|\F^{t/\ep}\Big]\nonumber\\&=&\int_{t+h}^\infty\E[\E(\pe(s,t+h)|\F^{(t+h)/\ep})|\F^{t/\ep}]ds-\int_t^\infty\E[\E(\pe(s,t)|\F^{t/\ep})|\F^{t/\ep}]ds\nonumber\\&=&\int_{t+h}^\infty\E(\pe(s,t+h)|\F^{t/\ep})ds-\int_t^\infty\E(\pe(s,t)|\F^{t/\ep})ds\nonumber\\&=&\int_{t+h}^\infty\E(\pe(s,t+h)-\pe(s,t)|\F^{t/\ep})ds-\int_t^{t+h}\E(\pe(s,t)|\F^{t/\ep})ds\nonumber\\&=:&A^\ep_t(h)-B^\ep_t(h).
	\end{eqnarray}
	By Lebesgue differentiation theorem, for a.e. $t\in [0,T]$,
	\begin{equation}\label{B(h)}
		\lim\limits_{h\to 0} \frac{1}{h}B^\ep_t(h)=\E(\pe_{t,t}|\F^{t/\ep})=\pe_{t,t},\quad P\text{-a.s.}
	\end{equation}
	On the other hand, by conditional Fubini theorem,
\begin{eqnarray}\label{noname5}
	&&A^\ep_t(h)\nonumber\\&=&\int_{t+h}^\infty\E\Big(\int_t^{t+h}\partial_t\pe(s,\tau)d\tau|\F^{t/\ep}\Big)ds\nonumber\\&=&\int_{t+h}^\infty\int_t^{t+h}\E(\partial_t\pe(s,\tau)|\F^{t/\ep})d\tau ds\nonumber\\&=&\int_t^{t+h}\int_{t+h}^\infty\E(\partial_t\pe(s,\tau)|\F^{t/\ep})dsd\tau\nonumber\\&=&\int_t^{t+h}\int_{t}^\infty\E(\partial_t\pe(s,\tau)|\F^{t/\ep})dsd\tau-\int_t^{t+h}\int_{t}^{t+h}\E(\partial_t\pe(s,\tau)|\F^{t/\ep})dsd\tau\nonumber\\&=:&A^{\ep,1}_t(h)-A^{\ep,2}_t(h).
\end{eqnarray}
	By Lebesgue differentiation theorem,
	\begin{equation}\label{A1(h)}
		\lim\limits_{h\to 0} \frac{1}{h}A^{\ep,1}_t(h)=\int_t^\infty\E(\partial_t\pe(s,t)|\F^{t/\ep})ds
	\end{equation}
	for a.e. $t\in [0,T]$ and a.s. $\omega\in\Omega$. Besides,
	\begin{eqnarray}\label{A2(h)}
		&&\limsup_{h\to 0}\E\Big|\frac{1}{h}A^{\ep,2}_t(h)\Big|\nonumber\\&=&\limsup_{h\to 0}\frac{1}{h}\E\Big|\int_t^{t+h}\int_{t}^{t+h}\E(\partial_t\pe(s,\tau)|\F^{t/\ep})dsd\tau\Big|\nonumber\\&\leq&\limsup_{h\to 0}\frac{1}{h}\int_t^{t+h}\int_{t}^{t+h}\E|\E(\partial_t\pe(s,\tau)|\F^{t/\ep})|dsd\tau\nonumber\\&\leq&\limsup_{h\to 0}\frac{1}{h}\int_t^{t+h}\int_{t}^{t+h}\E|\partial_t\pe(s,\tau)|dsd\tau\nonumber\\&=&0.
	\end{eqnarray}
	 Therefore  
	\begin{equation}\label{A(h)}
		\lim\limits_{h\to 0}\frac{1}{h}A^\ep_t(h)=\int_t^\infty\E(\partial_t\pe(s,t)|\F^{t/\ep})ds
	\end{equation}
	in probability for a.e. $t\in [0,T]$ by (\ref{A1(h)}) and (\ref{A2(h)}). The proof is finished by combining (\ref{B(h)}) and (\ref{A(h)}).
	\end{proof}
	
	The following lemma is checked by a direct computation.
\begin{lemma}\label{lemma-derivative}
	For each $(x,X,y)\in\mathbb{R}^d\times\mathbb{R}^{d\times d}\times\mathbb{R}^d,$ define
	$$\Phi(x,X,y):=\sum_i\partial_{x_i}\vp(x,X)y^i+\sum_{i,j}\partial_{X_{ij}}\vp(x,X)x^iy^j,$$
	\begin{eqnarray*}
		&&\Theta^{ij}(x,X)\\&:=&\partial^2_{x_ix_j}\vp(x,X)+\sum_k\partial^2_{x_iX_{kj}}\vp(x,X)x^k+\sum_k\partial^2_{X_{ki}x_j}\vp(x,X)x^k\\&&+\sum_{k,l}\partial^2_{X_{ki}X_{lj}}\vp(x,X)x^kx^l+\partial_{X_{ij}}\vp(x,X),
	\end{eqnarray*}
	and
	$$\Xi^{ij}(x,X,y):=\sum_k\partial_{x_k}\Theta^{ij}(x,X)y^k+\sum_{k,l}\partial_{X_{kl}}\Theta^{ij}(x,X)x^ky^l.$$
	Then
	
	(i) $\frac{d}{dt}\vp(\ue_t,\bbue_t)=\frac{1}{\sqrt{\ep}}\Phi(\ue_t,\bbue_t,\ee_t)$.
	
	(ii) $\frac{d}{dt}\Phi(\ue_t,\bbue_t,y)=\frac{1}{\sqrt{\ep}}\sum\limits_{i,j}\Theta^{ij}(\ue_t,\bbue_t)\eta^{\ep,i}_ty^j$.
	
	(iii) $\frac{d}{dt}\Theta^{ij}(\ue_t,\bbue_t)=\frac{1}{\sqrt{\ep}}\Xi^{ij}(\ue_t,\bbue_t,\ee_t)$.
\end{lemma}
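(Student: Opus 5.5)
The plan is to compute derivatives directly along the trajectory $t\mapsto(\ue_t,\bbue_t)$, using the fact that the canonical lift is a Riemann--Stieltjes integral of a $C^1$ path. First record the two basic differential identities. Since $\ue_t=\frac{1}{\sqrt{\ep}}\intot\ee(s)ds$ and $\ee$ has $C^1$ trajectories, we have $\frac{d}{dt}\ue_t=\frac{1}{\sqrt{\ep}}\ee_t$. Here $\bbue_t$ denotes $\bbue_{0,t}=\intot\ue_r\otimes d\ue_r$ (recall $\ue_0=0$), so componentwise
\[
\frac{d}{dt}\bbue^{ij}_t=\ue^{i}_t\,\frac{d}{dt}\ue^j_t=\frac{1}{\sqrt{\ep}}\ue^{i}_t\eta^{\ep,j}_t .
\]
Both identities hold pointwise in $t$ for a.e. $\omega$, by the fundamental theorem of calculus for continuous integrands.

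For (i), apply the chain rule to the smooth function $\vp$ composed with the $C^1$ curve $(\ue_t,\bbue_t)$:
\[
\frac{d}{dt}\vp=\sum_i\partial_{x_i}\vp\,\frac{1}{\sqrt{\ep}}\eta^{\ep,i}_t+\sum_{i,j}\partial_{X_{ij}}\vp\,\frac{1}{\sqrt{\ep}}\ue^i_t\eta^{\ep,j}_t .
\]
This is exactly $\frac{1}{\sqrt{\ep}}\Phi(\ue_t,\bbue_t,\ee_t)$.

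For (ii), fix a deterministic $y$. Then $\Phi(\cdot,\cdot,y)$ is a smooth function of $(x,X)$, and the chain rule again gives
\[
\frac{1}{\sqrt{\ep}}\sum_m\Bigl[\partial_{x_m}\Phi+\sum_k\partial_{X_{km}}\Phi\, x^k\Bigr]\eta^{\ep,m}_t .
\]
Expand $\partial_{x_m}\Phi$ and $\partial_{X_{km}}\Phi$ using the product rule on the term $x^iy^j$. The derivative $\partial_{x_m}$ hitting $x^i$ produces $\partial_{X_{mj}}\vp\,y^j$, which is the last term of $\Theta^{mj}$. The remaining terms are
\[
\partial^2_{x_mx_j}\vp\,y^j,\qquad \partial^2_{x_mX_{kj}}\vp\,x^ky^j,\qquad \partial^2_{X_{km}x_j}\vp\,x^ky^j,\qquad \partial^2_{X_{km}X_{lj}}\vp\,x^kx^ly^j .
\]
After relabeling indices these match the definition of $\Theta^{ij}$ with $i=m$. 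The only real bookkeeping obstacle is checking that the index placement in the mixed terms ($X_{kj}$ versus $X_{ki}$) matches the definition. Smoothness of $\vp$ lets us commute mixed partials, so the terms can be arranged as written.

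For (iii), the same chain rule applies to the smooth function $\Theta^{ij}(x,X)$. It yields
\[
\frac{1}{\sqrt{\ep}}\Bigl[\sum_k\partial_{x_k}\Theta^{ij}\eta^{\ep,k}_t+\sum_{k,l}\partial_{X_{kl}}\Theta^{ij}\ue^k_t\eta^{\ep,l}_t\Bigr]=\frac{1}{\sqrt{\ep}}\Xi^{ij}(\ue_t,\bbue_t,\ee_t).
\]
This completes the three identities, and all of the work lies in the routine differentiation.
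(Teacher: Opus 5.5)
Your proposal is correct and is the same direct chain-rule computation the paper relies on: it uses $\frac{d}{dt}\ue_t=\frac{1}{\sqrt{\ep}}\ee_t$ and $\frac{d}{dt}\bbue^{ij}_t=\frac{1}{\sqrt{\ep}}\ue^i_t\eta^{\ep,j}_t$, which hold because $\ue_0=0$, and your index bookkeeping in (ii) does reproduce $\Theta^{ij}$ exactly. One minor remark: you do not actually need to commute mixed partials there, and continuity of $\eta$, rather than $C^1$, already suffices for these identities.
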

		In the following, we write $\bbue_t:=\bbue_{0,t}$ and $\bfue_t:=(\ue_t,\bbue_t)$ for notation simplicity, and define
		\begin{eqnarray*}
			\ye_1(t)&:=&\frac{1}{\sqrt{\ep}}\int_t^\infty\E(\Phi(\ue_t,\bbue_t,\ee_s)|\F^{t/\ep})ds,\\\ye_2(t)&:=&\int_t^\infty\E\Big(\sum_{i,j}\Theta^{ij}(\ue_t,\bbue_t)(C^{\ep,ij}_s-A^{ij})|\F^{t/\ep}\Big),
		\end{eqnarray*} 
		where $A^{ij}= \int_0^\infty \mathbb{E}[\eta^i(0)\eta^j(s)]ds$ is defined in Section \ref{section-preli} and $C^{\ep,ij}_t:=\frac{1}{\ep}\int_t^\infty\E(\eta^{\ep,i}_t\eta^{\ep,j}_s|\F^{t/\ep})ds.$
	\begin{prop}\label{prop-finding Z(2)}
		(i) Let $$\ze_1(t):=\lim_{h\to 0}\frac{1}{h}\E(\ye_1(t+h)-\ye_1(t)|\F^{t/\ep})\enspace \text{in probability},$$ then
		$$\ze_1(t)=-\frac{1}{\sqrt{\ep}}\Phi(\ue_t,\bbue_t,\ee_t)+\sum_{i,j}\Theta^{ij}(\ue_t,\bbue_t)C^{\ep,ij}_t\enspace\text{a.s.}.$$ 
		
		(ii) Let 
		$$\ze_2(t):=\lim_{h\to 0}\frac{1}{h}\E(\ye_2(t+h)-\ye_2(t)|\F^{t/\ep})\enspace \text{in probability},$$ then
		$$\ze_2(t)=E^\ep_t-\sum_{i,j}\Theta^{ij}(\ue_t,\bbue_t)(C^{\ep,ij}_t-A^{ij})\enspace\text{a.s.},$$
		where
		$$E^\ep_t:=\frac{1}{\sqrt{\ep}}\int_t^\infty \E\Big(\sum_{i,j}[\Xi^{ij}(\ue_t,\bbue_t,\ee_t)(C^{\ep,ij}_s-A^{ij})]|\F^{t/\ep}\Big)ds.$$
	\end{prop}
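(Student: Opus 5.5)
Both parts follow by applying Lemma \ref{lemma-finding Z(1)} with a suitable two-parameter process and then using Lemma \ref{lemma-derivative} to compute the $t$-derivative.

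\textbf{Part (i).} Set $\pe(s,t):=\frac{1}{\sqrt{\ep}}\Phi(\ue_t,\bbue_t,\ee_s)$. We check the hypotheses of Lemma \ref{lemma-finding Z(1)}:
\begin{itemize}
\item \emph{Measurability.} $\ue_t$ and $\bbue_t$ are $\F^{t/\ep}$-measurable, and $\ee_s=\eta(s/\ep)$ is $\F^{s/\ep}$-measurable (up to the usual right-continuity convention). Hence $\pe(s,t)\in\F^{(t/\ep)\vee(s/\ep)}$.
\item \emph{Derivative in $t$.} By Lemma \ref{lemma-derivative}(ii), with $y=\ee_s$ frozen,
$$\partial_t\pe(s,t)=\frac1\ep\sum_{i,j}\Theta^{ij}(\ue_t,\bbue_t)\eta^{\ep,i}_t\eta^{\ep,j}_s.$$
\item \emph{Uniform moment bound.} Since $\vp\in C_c^\infty$, each $\Theta^{ij}$ is bounded: its polynomial factors $x^k,x^kx^l$ are multiplied by derivatives of $\vp$ supported in a compact set. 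With $(\mathbf{A}_1)$ this gives $\sup_{s,\,t\le T}\E|\partial_t\pe|\le C\ep^{-1}$. The same reasoning bounds $\Phi$, so $\pe$ takes values in $L^1$.
\end{itemize}
Lemma \ref{lemma-finding Z(1)} then yields
$$\ze_1(t)=\int_t^\infty\E(\partial_t\pe(s,t)|\F^{t/\ep})\,ds-\pe(t,t).$$
Because $\Theta^{ij}(\ue_t,\bbue_t)\eta^{\ep,i}_t$ is $\F^{t/\ep}$-measurable, it can be pulled out of the conditional expectation. The remaining integral is exactly $C^{\ep,ij}_t$, which gives the claimed formula.

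\textbf{Part (ii).} Set $\pe(s,t):=\sum_{i,j}\Theta^{ij}(\ue_t,\bbue_t)\bigl(C^{\ep,ij}_s-A^{ij}\bigr)$. Then $\ye_2$ has the required form, up to the missing $ds$ in its definition. Measurability holds because $C^{\ep}_s\in\F^{s/\ep}$. By Lemma \ref{lemma-derivative}(iii),
$$\partial_t\pe(s,t)=\frac{1}{\sqrt\ep}\sum_{i,j}\Xi^{ij}(\ue_t,\bbue_t,\ee_t)\bigl(C^{\ep,ij}_s-A^{ij}\bigr).$$
Lemma \ref{lemma-finding Z(1)} produces $E^\ep_t$ from the integral term and $-\sum\Theta^{ij}(C^{\ep,ij}_t-A^{ij})$ from $-\pe(t,t)$.

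\textbf{Main obstacle.} The substantive work is the integrability hidden in Lemma \ref{lemma-finding Z(1)}: $\ye_2(t)$ and the integral defining $E^\ep_t$ must converge absolutely in $L^1$.
\begin{itemize}
\item \emph{$C^\ep_s$ is uniformly bounded.} By the Davydov inequality (Proposition \ref{prop-davydov}) together with $(\mathbf{A}_2)$, $\|\E(\eta^{i}(u)\eta^j(u+r)|\F^u)\|_2\lesssim m(r)^{1/4}$, and this is integrable in $r$. So $\sup_s\|C^{\ep}_s\|_2\le C$.
\item \emph{The $s$-integral converges.} Take $s>t$ and condition on $\F^{t/\ep}$. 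Using stationarity, $\E(C^{\ep,ij}_s\mid\F^{t/\ep})-A^{ij}$ is a centered functional of the process after time $s/\ep$. A mixing estimate (a conditional Davydov/Ibragimov-type bound) gives
$$\E\bigl|\E(C^{\ep,ij}_s-A^{ij}\mid\F^{t/\ep})\bigr|\lesssim m\bigl((s-t)/\ep\bigr)^{\theta}\quad\text{for some }\theta>0.$$
Integrating in $s$ then gives a bound of order $\ep$.
\item \emph{Conclusion.} The two bullets above, combined with the boundedness of $\Theta$ and $\Xi\cdot\ee_t$ in $L^2$ and Fubini, justify every interchange used in the proof of Lemma \ref{lemma-finding Z(1)}.
\end{itemize}
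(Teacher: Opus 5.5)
Your core argument is the paper's proof. The paper's proof consists only of three steps: applying Lemma~\ref{lemma-finding Z(1)} to $\pe(s,t)=\frac{1}{\sqrt{\ep}}\Phi(\ue_t,\bbue_t,\ee_s)$ and to $\pe(s,t)=\sum_{i,j}\Theta^{ij}(\ue_t,\bbue_t)(C^{\ep,ij}_s-A^{ij})$; computing $\partial_t\pe$ via Lemma~\ref{lemma-derivative}(ii),(iii); and reading off $C^{\ep,ij}_t$, $E^\ep_t$ and $-\pe(t,t)$. Your derivative formulas and the bound $\E|\partial_t\pe|\lesssim\ep^{-1}$ in part (i) are correct.

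The paper does not check in this proof that $\ye_2(t)$ and $E^\ep_t$ are well defined. Your extra paragraph tries to, but it contains a wrong step. $C^{\ep,ij}_s=\eta^i(s/\ep)\int_0^\infty\E(\eta^j(s/\ep+r)|\F^{s/\ep})\,dr$ is $\F^{s/\ep}$-measurable, i.e.\ a functional of the \emph{past}. Its conditional expectation $\E(C^{\ep,ij}_s|\F^{t/\ep})$ is even $\F^{t/\ep}$-measurable. Neither is a functional of the process after time $s/\ep$. So a Davydov-type bound such as Lemma~\ref{lemma-vanishing limit(2)}, which needs $\F^\infty_{s/\ep}$-measurability, cannot be applied to $C^{\ep,ij}_s-A^{ij}$ directly; for a general past-measurable centered variable there is no decay at all.

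The estimate you want is still true, but it needs one more step. By conditional Fubini and the tower property,
\begin{equation*}
\E\bigl(C^{\ep,ij}_s-A^{ij}\,\big|\,\F^{t/\ep}\bigr)=\int_0^\infty\E\bigl(Y_r\,\big|\,\F^{t/\ep}\bigr)\,dr,\qquad Y_r:=\eta^i(s/\ep)\eta^j(s/\ep+r)-R^{ij}(r).
\end{equation*}
Each $Y_r$ is centered and $\F^\infty_{s/\ep}$-measurable. Hence Lemma~\ref{lemma-vanishing limit(2)} gives $\|\E(Y_r|\F^{t/\ep})\|_p\lesssim m((s-t)/\ep)^{\theta}$ for some $\theta>0$. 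Conditioning first on $\F^{s/\ep}$ and using $|R^{ij}(r)|\lesssim m(r)^{1/4}$ also gives $\|\E(Y_r|\F^{t/\ep})\|_p\lesssim m(r)^{\theta}$. Combining the two via $\min(a,b)\le\sqrt{ab}$ and integrating in $r$ yields $\|\E(C^{\ep,ij}_s-A^{ij}|\F^{t/\ep})\|_p\lesssim m((s-t)/\ep)^{\theta/2}$.

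You need this $L^p$ version, not merely the $L^1$ bound you stated, to pair it by H\"older with the unbounded factor $\Xi^{ij}(\ue_t,\bbue_t,\ee_t)$ in $E^\ep_t$. The paper's Lemma~\ref{lemma-vanishing limit(1)}, later applied with $B^\ep_s=C^{\ep,ij}_s-A^{ij}$, tacitly makes the same future-measurability assumption, so this repair is worth keeping.
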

	\begin{proof}
		Applying Lemma \ref{lemma-finding Z(1)} and Lemma \ref{lemma-derivative} successively,
		\begin{eqnarray*}
			&&\ze_1(t)\nonumber\\&=&\frac{1}{\sqrt{\ep}}\Big[\int_t^\infty\E(\partial_t\Phi(\ue_t,\bbue_t,\ee_s)|\F^{t/\ep})ds-\Phi(\ue_t,\bbue_t,\ee_t)\Big]\nonumber\\&=&\frac{1}{\sqrt{\ep}}\int_t^\infty\E\Big(\frac{1}{\sqrt{\ep}}\sum_{i,j}\Theta^{ij}(\ue_t,\bbue_t)\eta^{\ep,i}_t\eta^{\ep,j}_s|\F^{t/\ep}\Big)ds-\frac{1}{\sqrt{\ep}}\Phi(\ue_t,\bbue_t,\ee_t)\nonumber\\&=&\frac{1}{\ep}\sum_{i,j}[\Theta^{ij}(\ue_t,\bbue_t)\int_t^\infty\E(\eta^{\ep,i}_t\eta^{\ep,j}_s|\F^{t/\ep})ds]-\frac{1}{\sqrt{\ep}}\Phi(\ue_t,\bbue_t,\ee_t),
		\end{eqnarray*}
		and
		\begin{eqnarray*}
			&&\ze_2(t)\nonumber\\&=&\int_t^\infty\E\Big(\sum_{i,j}\partial_t[\Theta^{ij}(\ue_t,\bbue_t)(C^{\ep,ij}_s-A^{ij})]|\F^{t/\ep}\Big)ds-\sum_{i,j}\Theta^{ij}(\ue_t,\bbue_t)(C^{\ep,ij}_t-A^{ij})\nonumber\\&=&\frac{1}{\sqrt{\ep}}\int_t^\infty\E\Big(\sum_{i,j}[\Xi^{ij}(\ue_t,\bbue_t,\ee_t)(C^{\ep,ij}_s-A^{ij})]|\F^{t/\ep}\Big)-\sum_{i,j}\Theta^{ij}(\ue_t,\bbue_t)(C^{\ep,ij}_t-A^{ij}).
		\end{eqnarray*}
		
	\end{proof}

\begin{lemma}\label{lemma-vanishing limit(2)}
	Let $s\geq 0$, $t>0$. Assume that $Y$ is a $\F_{s+t}^\infty$-measurable real random variable, $\E Y=0,$ $\|Y\|_k<\infty,$ $k>p\geq 1$, then
	$$\|\E(Y|\F^s)\|_p\lesssim m(t)^{\frac{1}{p}-\frac{1}{k}}\|Y\|_k.$$
\end{lemma}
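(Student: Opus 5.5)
This is the standard conditional-expectation bound under strong mixing; I would prove it by duality. Set $Z:=\E(Y|\F^s)$, which is $\F^s=\F_{-\infty}^s$-measurable. Since $\|Z\|_p=\sup\{\E[ZW]:\ W\in\F^s,\ \|W\|_{p'}\le1\}$ with $1/p+1/p'=1$, it suffices to bound $\E[ZW]$ for such $W$. By the tower property, $\E[ZW]=\E[YW]$. Because $\E Y=0$, this equals $\mathrm{Cov}(W,Y)$. Here $W\in\F^s_{-\infty}$ and $Y\in\F^\infty_{s+t}$, so $\al(\sigma(W),\sigma(Y))\le m(t)$ by the definition of $m$.

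Applying Proposition \ref{prop-davydov} directly with exponents $p'$ and $k$ requires $1/p'+1/k<1$, i.e.\ $1/k<1/p$, which holds since $k>p$. This gives $1/r=1-1/p'-1/k=1/p-1/k$, and hence
\[|\E[YW]|\le 2r(2m(t))^{\frac1p-\frac1k}\|W\|_{p'}\|Y\|_k\lesssim m(t)^{\frac1p-\frac1k}\|Y\|_k.\]
One caveat: Davydov's hypothesis needs $p'>1$, so this route requires $p>1$. If $p=1$, then $p'=\infty$, and I would use the bounded-variable version of the covariance inequality, i.e.\ Davydov with $q=\infty$. Alternatively, I would take $W=\mathrm{sgn}(Z)$, which is bounded, and apply Proposition \ref{prop-davydov} with a large finite exponent in place of $\infty$. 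That choice yields the exponent $1-1/k-\delta$, and letting the exponent tend to infinity, or citing the $L^\infty$ form, recovers $1-1/k$.

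Taking the supremum over $W$ gives $\|Z\|_p\lesssim m(t)^{1/p-1/k}\|Y\|_k$. The only delicate point is the endpoint case $p=1$ just discussed. For $p>1$ the argument is immediate, and the constant $2r\cdot2^{1/r}$ depends only on $p$ and $k$, as the statement allows.
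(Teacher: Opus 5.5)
Your proof is correct and is essentially the paper's argument: $L^p$–$L^{p'}$ duality over $\F^s$-measurable test variables, the tower property and $\E Y=0$ to turn $\E[W\,\E(Y|\F^s)]$ into $\mathrm{Cov}(W,Y)$, and Proposition \ref{prop-davydov} with exponents $p'$ and $k$, so that $1/r=1/p-1/k$. Your separate handling of the endpoint $p=1$ is a valid refinement the paper omits, since the paper's form of Davydov requires finite exponents; either route works, via the $L^\infty$ covariance inequality or by letting the exponent of $W=\mathrm{sgn}(Z)$ tend to infinity, as the left-hand side does not depend on it.
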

\begin{proof}
	Set $q=\frac{p}{p-1}$.
	\begin{eqnarray}\label{eqforlemma1}
		\|\E(Y|\F^s)\|_p&=&\sup_{X\in\F^s,\|X\|_q\leq 1}|\E[X\E(Y|\F^s)]|\nonumber\\&=&\sup_{X\in\F^s,\|X\|_q\leq 1}|\E\E(XY|\F^s)|\nonumber\\&=&\sup_{X\in\F^s,\|X\|_q\leq 1}|E(XY)|\nonumber\\&=&\sup_{X\in\F^s,\|X\|_q\leq 1}|\text{Cov}(XY)|.
	\end{eqnarray}
	Let $\frac{1}{r}=1-\frac{1}{q}-\frac{1}{k}.$ By Davydov inequality, for each $X\in\F^s$ with $\|X\|_q\leq 1$,
	\begin{eqnarray}\label{eqforlemma2}
		|\text{Cov}(XY)|&\lesssim_r&[\al(\sigma(X),\sigma(Y))]^{1/r}\|X\|_q\|Y\|_k\nonumber\\&\leq&[\al(\F^s,\F_{s+t}^\infty)]^{1/r}\|X\|_q\|Y\|_k\nonumber\\&=&m(t)^{1/r}\|X\|_q\|Y\|_k\nonumber\\&\leq& m(t)^{1/r}\|Y\|_k.
	\end{eqnarray}
	Combining (\ref{eqforlemma1})--(\ref{eqforlemma2}), the proof is finished.
\end{proof}

		\begin{lemma}\label{lemma-vanishing limit(1)}
		Let $\Ae, \Be$ be two real random processes adapted to $\bbG^\ep$, and $\E\Be_s=0$ for all $s\in [0,\infty)$. Set $$\ke_t:=\frac{1}{\sqrt{\ep}}\int_t^\infty\E(\Ae_t\Be_s|\F^{t/\ep})ds.$$ 
		Assume that for fixed $T>0,$
		\begin{equation}\label{assumption}
			\supe\supt\|\Ae_t\|_p\leq C_{p,T},\enspace \supe\sup\limits_{0\leq s<\infty}\|\Be_s\|_q\leq C_q
		\end{equation}
		for some $p,q>1$ with $\frac{1}{p}+\frac{1}{q}<1$. Then for each $r\geq 1,$
		\begin{equation}
			\lim\limits_{\ep\to 0}\supt\|K^\ep_t\|_r=0.
		\end{equation}
		%In particular, $(\ve_t)_{0<\ep\leq 1}$ is uniformly integrable for each $t\in [0,T]$.
	\end{lemma}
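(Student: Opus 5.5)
The plan is to use the adaptedness of $\Ae$ to pull it out of the conditional expectation. Then the mixing decay from Lemma \ref{lemma-vanishing limit(2)} is applied to the conditional expectation of $\Be_s$ given the past $\F^{t/\ep}$, and the gap $s-t$ is converted into a gain of a factor $\ep$ by the change of variables $u=(s-t)/\ep$.

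\textbf{Two readings of the hypotheses, stated up front.} First, I read the constants $C_{p,T}$ and $C_q$ in \eqref{assumption} as available for all exponents. This is how the notation is used throughout the paper, and it is what makes ``for each $r\geq1$'' possible. For a single fixed pair $(p,q)$, Hölder only controls $L^{\tilde r}$ norms with $1/\tilde r\geq 1/p+1/q$, so the conclusion for every $r$ genuinely needs this reading. Second, for the decay step I use that $\Be_s$ depends on the noise only from time $s/\ep$ onwards, i.e.\ $\Be_s\in\F_{s/\ep}^\infty$. This holds in the intended applications, where $\Be_s$ is a function of $\eta^\ep_s$ or of the future of $\eta$ after $s/\ep$. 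It is the one point where the argument depends on the structure of $\Be$ beyond the stated adaptedness to $\bbG^\ep$, and I expect it to be the main obstacle in writing the proof cleanly. Without some such future-measurability, $\E(\Be_s|\F^{t/\ep})$ need not decay in $s-t$, so no bound of the desired form can be expected.

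\textbf{Step 1: factor out $\Ae_t$ and apply Hölder.} Fix $r\geq1$. Since $\Ae_t$ is $\F^{t/\ep}$-measurable,
\[
\ke_t=\frac{1}{\sqrt{\ep}}\,\Ae_t\int_t^\infty \E(\Be_s|\F^{t/\ep})\,ds .
\]
Hölder and Minkowski then give
\[
\|\ke_t\|_r\leq \frac{1}{\sqrt{\ep}}\,\|\Ae_t\|_{2r}\int_t^\infty\|\E(\Be_s|\F^{t/\ep})\|_{2r}\,ds .
\]

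\textbf{Step 2: mixing decay.} For $s>t$, apply Lemma \ref{lemma-vanishing limit(2)} with the following choices:
\begin{itemize}
\item $Y=\Be_s$, which is centred and lies in $\F^\infty_{t/\ep+(s-t)/\ep}$;
\item the conditioning time $t/\ep$ and the gap $(s-t)/\ep$;
\item the exponent pair $(2r,k)$ with some $k>2r$, using $\supe\sup_s\|\Be_s\|_k\leq C_k$.
\end{itemize}
This yields
\[
\|\E(\Be_s|\F^{t/\ep})\|_{2r}\lesssim m\big((s-t)/\ep\big)^{\theta},\qquad \theta=\frac{1}{2r}-\frac{1}{k}>0 .
\]

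\textbf{Step 3: integrate the decay.} Substituting $u=(s-t)/\ep$ and using ($\mathbf{A}_2$),
\[
\int_t^\infty m\big((s-t)/\ep\big)^{\theta}\,ds=\ep\int_0^\infty m(u)^\theta\,du\leq \ep\int_0^\infty C^\theta e^{-C\theta u}\,du=C'\ep .
\]

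\textbf{Step 4: conclude.} Combining Steps 1--3 with $\supt\|\Ae_t\|_{2r}\leq C_{2r,T}$ gives
\[
\supt\|\ke_t\|_r\lesssim \ep^{-1/2}\cdot \ep=\ep^{1/2}\to0 .
\]
The only technical points to check along the way are routine. Conditional Fubini is needed to move the norm inside the $ds$-integral. The constant $C'$ must be uniform in $t$, which holds because $m$ is defined as a supremum over base points.
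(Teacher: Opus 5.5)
Your proof is correct and is the paper's argument step for step: factor $\Ae_t$ out of the conditional expectation, apply H\"older with exponents $(2r,2r)$ and Minkowski, invoke Lemma~\ref{lemma-vanishing limit(2)} with exponents $(2r,k)$, $k>2r$, and substitute $u=(s-t)/\ep$ to obtain $\supt\|\ke_t\|_r=\mathcal{O}(\sqrt{\ep})$. The two readings you make explicit are exactly what the paper uses tacitly. Its proof needs $\Be_s\in\F^\infty_{s/\ep}$ to apply Lemma~\ref{lemma-vanishing limit(2)}, and under mere $\bbG^\ep$-adaptedness the statement fails, e.g.\ for $\Ae\equiv 1$ and $\Be_s\equiv\eta^1(-1)$ with $\eta^1(-1)$ not a.s.\ zero. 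Your aside that this future-measurability holds in all intended applications is too optimistic, however: $C^{\ep,ij}_s-A^{ij}$ and $Q^{\ep,ij}_s$ are in general only past-measurable, so for those uses the decay of $\E(\Be_s|\F^{t/\ep})$ needs a separate justification.
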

	\begin{proof}
		Choose $k>2r$. A use of Lemma \ref{lemma-vanishing limit(2)} and a change-of-variable yield
		\begin{eqnarray*}
			&&\|K^\ep_t\|_r\nonumber\\&=&\frac{1}{\sqrt{\ep}}\|\Ae_t\int_t^\infty\E(\Be_s|\F^{t/\ep})ds\|_r\\&\leq&\frac{1}{\sqrt{\ep}}\|\Ae_t\|_{2r}\cdot\Big\|\int_t^\infty\E(\Be_s|\F^{t/\ep})ds\Big\|_{2r}\\&\leq&\frac{1}{\sqrt{\ep}}\|\Ae_t\|_{2r}\cdot\int_t^\infty\|\E(\Be_s|\F^{t/\ep})\|_{2r}ds\\&=&\frac{1}{\sqrt{\ep}}\|\Ae_t\|_{2r}\int_t^\infty m\Big(\frac{s-t}{\ep}\Big)^{\frac{1}{2r}-\frac{1}{k}}\|\Be_s\|_kds\\&\leq&\frac{1}{\sqrt{\ep}}\supt\|\Ae_t\|_{2r}\int_t^\infty m\Big(\frac{s-t}{\ep}\Big)^{\frac{1}{2r}-\frac{1}{k}}ds\cdot\sup_{0\leq s<\infty}\|\Be_s\|_k\\&=&\sqrt{\ep}\supt\|\Ae_t\|_{2r}\cdot\sup_{0\leq s<\infty}\|\Be_s\|_k\int_0^\infty m(s)^{\frac{1}{2r}-\frac{1}{k}}ds\\&=&\mathcal{O}(\sqrt{\ep}).
		\end{eqnarray*}
	\end{proof}
	With Lemma \ref{lemma-vanishing limit(1)}, we establish three vanishing limits. 
	\begin{prop}\label{prop-vanishing limit}
		For each $t\in [0,T]$ and $p\geq 1$,
		$$\lim_{\ep\to 0}\|\ye_1(t)\|_p\vee\|\ye_2(t)\|_p\vee\Big\|\intot E^\ep_rdr\Big\|_p=0$$
	\end{prop}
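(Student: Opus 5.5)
All three limits will be obtained from Lemma \ref{lemma-vanishing limit(1)} with suitable choices of $\Ae$ and $\Be$. Two facts are used repeatedly. First, $\vp\in C_c^\infty$, so $\Phi$, $\Theta^{ij}$ and $\Xi^{ij}$ are polynomials in $(x,X,y)$ whose coefficients are derivatives of $\vp$; these coefficients are bounded, and the $x$-factors are in fact confined to a compact set. Combined with Proposition \ref{prop-momentestimate}(i) (with $s=0$) and ($\mathbf{A}_1$), every such expression evaluated at $(\ue_t,\bbue_t,\ee_r)$ has $L^p$ norms bounded uniformly in $\ep\in(0,1]$ and $t\le T$, for every $p$. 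Second, stationarity will be used to control $C^{\ep,ij}_s-A^{ij}$.

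\textbf{Step 1: $\ye_1$.} Since $\Phi$ is linear in $y$, I write $\Phi(\ue_t,\bbue_t,\ee_s)=\sum_i \Ae_{i,t}\eta^{\ep,i}_s$ with
\[
\Ae_{i,t}:=\partial_{x_i}\vp(\ue_t,\bbue_t)+\sum_k\partial_{X_{ki}}\vp(\ue_t,\bbue_t)(\ue_t)^k .
\]
These coefficients are $\F^{t/\ep}$-measurable and uniformly bounded in every $L^p$. Take $\Be_s=\eta^{\ep,i}_s$, which is centered, $\bbG^\ep$-adapted and bounded in $L^q$ by ($\mathbf{A}_1$). Then $\ye_1(t)=\sum_i K^\ep_t$, and Lemma \ref{lemma-vanishing limit(1)} gives $\|\ye_1(t)\|_p=\mathcal{O}(\sqrt\ep)$.

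\textbf{Step 2: $\ye_2$.} Again $\Theta^{ij}(\ue_t,\bbue_t)$ is $\F^{t/\ep}$-measurable, so it can be pulled out of the conditional expectation. It remains to estimate
\[
\int_t^\infty \big\|\E(C^{\ep,ij}_s-A^{ij}\,|\,\F^{t/\ep})\big\|_{2p}\,ds .
\]
By the substitution $s\mapsto s/\ep$ and stationarity,
\[
C^{\ep,ij}_s=\int_0^\infty \E\big(\eta^i(s/\ep)\eta^j(s/\ep+u)\,\big|\,\F^{s/\ep}\big)du ,
\]
so $C^{\ep,ij}_s=c^{ij}(s/\ep)$ for a stationary process $c^{ij}$ with $\E c^{ij}=A^{ij}$. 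Moreover, Lemma \ref{lemma-vanishing limit(2)} bounds each integrand by $m(u)^{\frac{1}{k}-\frac{1}{k'}}$, and $m$ is integrable, so $c^{ij}$ has all moments. Applying Lemma \ref{lemma-vanishing limit(2)} once more to the centered variable $c^{ij}(s/\ep)-A^{ij}$, I would like a bound $m((s-t)/\ep)^{\theta}$. The catch is that $c^{ij}(s/\ep)$ is only $\F^{s/\ep}$-measurable, not $\F_{s/\ep}^\infty$-measurable. I will handle this by splitting the gap: approximate $c^{ij}(s/\ep)$ by its conditional expectation given $\F_{(t+s)/(2\ep)}^{\infty}$, or equivalently split the $u$-integral and the past, so that the mixing lemma applies over a gap of $(s-t)/(2\ep)$. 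I expect this to yield a bound of the form $\lesssim m((s-t)/(2\ep))^{\theta}+e^{-c(s-t)/\ep}$ with a remainder controlled by ($\mathbf{A}_2$). Integrating in $s$ then gives $\mathcal{O}(\ep)$, hence $\|\ye_2(t)\|_p\to0$.

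\textbf{Step 3: $\int_0^t E^\ep_r\,dr$.} The term $\Xi^{ij}(\ue_r,\bbue_r,\ee_r)$ is $\F^{r/\ep}$-measurable and bounded in every $L^p$. By the same estimate as in Step 2, the conditional integral $\int_r^\infty\|\E(C^{\ep,ij}_s-A^{ij}|\F^{r/\ep})\|\,ds$ is $\mathcal{O}(\ep)$. Hence $\|E^\ep_r\|_p=\mathcal{O}(\ep^{-1/2}\cdot\ep)=\mathcal{O}(\sqrt\ep)$ uniformly in $r\le T$, and Minkowski's inequality gives $\|\int_0^tE^\ep_r\,dr\|_p\le T\,\mathcal{O}(\sqrt\ep)\to0$. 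This is simply Lemma \ref{lemma-vanishing limit(1)} with $\Be_s=C^{\ep,ij}_s-A^{ij}$, once that $\Be$ is shown to have the decay property used in its proof.

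The main obstacle is the one identified in Step 2. The centered variable $C^{\ep,ij}_s-A^{ij}$ depends on the whole past $\F^{s/\ep}$, not only on the future $\sigma$-field that Lemma \ref{lemma-vanishing limit(2)} requires. I would first try the decomposition
\[
\E(\eta^i(a)\eta^j(b)\,|\,\F^a)-\E\big(\eta^i(a)\eta^j(b)\big) ,
\]
bounding its conditional expectation given $\F^{t/\ep}$ by covariance inequalities across the largest of the two gaps. In the worst case I would need to replace $\F^a$-conditioning with a truncated-past version at exponentially small cost, using the exponential mixing in ($\mathbf{A}_2$).
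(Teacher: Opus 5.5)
Your Step 1 is exactly the paper's argument. For $\ye_2$ and $\int_0^tE^\ep_r\,dr$ the paper does less than you. It only proves $\sup_{\ep}\sup_s\|C^{\ep,ij}_s\|_q<\infty$ and then cites Lemma~\ref{lemma-vanishing limit(1)} with $\Be_s=C^{\ep,ij}_s-A^{ij}$.

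Your objection to that step is correct. The proof of Lemma~\ref{lemma-vanishing limit(1)} applies Lemma~\ref{lemma-vanishing limit(2)} to $\Be_s$, which needs $\Be_s$ to be $\F^\infty_{s/\ep}$-measurable. Mere $\bbG^\ep$-adaptedness does not suffice: with $\Ae\equiv1$ and $\Be_s\equiv\eta^1(0)$, the integral defining $K^\ep_t$ diverges. Yet $C^{\ep,ij}_s$ is only $\F^{s/\ep}$-measurable. So your route repairs the paper's argument rather than varying it, and the decay estimate you ask for in Step 3 is exactly what is missing.

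Of the mechanisms you float for that estimate, use the ``largest of the two gaps'' one; it needs no approximation. For $a=t/\ep\le b=s/\ep$, conditional Fubini and the tower property give
\[
\E\big(c^{ij}(b)-A^{ij}\,\big|\,\F^{a}\big)=\int_0^\infty\Big[\E\big(\eta^i(b)\eta^j(b+u)\,\big|\,\F^{a}\big)-R^{ij}(u)\Big]\,du .
\]
The integrand admits two $L^q$ bounds:
\begin{itemize}
\item it is $\lesssim m(b-a)^\theta$, by Lemma~\ref{lemma-vanishing limit(2)} applied to the centered $\F^\infty_b$-measurable variable $\eta^i(b)\eta^j(b+u)-R^{ij}(u)$;
\item it is $\lesssim m(u)^\theta$, by conditional Jensen up to $\F^b$, H\"older, Lemma~\ref{lemma-vanishing limit(2)} for $\E(\eta^j(b+u)|\F^b)$, and Davydov for $R^{ij}(u)$.
\end{itemize}
Since $m$ is non-increasing, the integrand is therefore $\lesssim m(\max(b-a,u))^\theta\lesssim e^{-c\max(b-a,u)}$. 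Because $\int_0^\infty\!\int_0^\infty e^{-c\max(v,u)}\,du\,dv<\infty$, you get $\int_t^\infty\|\E(C^{\ep,ij}_s-A^{ij}|\F^{t/\ep})\|_q\,ds\le C\ep$. This yields $\|\ye_2(t)\|_p=\mathcal{O}(\ep)$ and $\sup_{r\le T}\|E^\ep_r\|_p=\mathcal{O}(\sqrt{\ep})$, as you claim.

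Note that this uses more than integrability of $m^\theta$. One needs $\int_0^\infty x\,m(x)^\theta\,dx<\infty$, which ($\mathbf{A}_2$) provides.

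Discard the alternatives of approximating $c^{ij}(s/\ep)$ by its conditional expectation given $\F^\infty_{(t+s)/(2\ep)}$, or truncating the past ``at exponentially small cost''. Strong mixing gives no quantitative control on how a conditional expectation given the past depends on the remote past. Those approximation errors therefore cannot be bounded in terms of $m$.
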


\begin{proof}
	The limit  $\lim\limits_{\ep\to 0}\|\ye_1(t)\|_p=0$ is a direct consequence of definition of $\ye_1$ and Lemma \ref{lemma-vanishing limit(1)}.
	
	Let us deal with $\ye_2$. In order to apply Lemma \ref{lemma-vanishing limit(1)}, it suffices to show
	\begin{equation}\label{eqforprop1}
		\supe\sup\limits_{0\leq s<\infty}\E|C^{\ep,ij}_s|^q<\infty
	\end{equation}
	for all $q>1.$ Note that
	\begin{eqnarray*}
		&&C^{\ep,ij}_s\nonumber\\&=&\frac{1}{\ep}\int_s^\infty\E\Big[\eta^i\big(\frac{s}{\ep}\big)\eta^j(\frac{r}{\ep})|\F^{s/\ep}\Big]dr\nonumber\\&=&\int_0^\infty\eta^i(\frac{s}{\ep})\E\Big[\eta^j\big(r+\frac{s}{\ep}\big)|\F^{s/\ep}\Big]dr.
	\end{eqnarray*}
	Applying H\"older inequality and Lemma \ref{lemma-vanishing limit(2)} for $p=2q$ and $k=3q$,
	\begin{eqnarray*}
		&&\|C^{\ep,ij}_s\|_q\nonumber\\&\leq&\int_0^\infty\Big\|\eta^i(\frac{s}{\ep})\E[\eta^j(r+\frac{s}{\ep})|\F^{s/\ep}]\Big\|_qdr\nonumber\\&\leq&\int_0^\infty\Big\|\eta^i(\frac{s}{\ep})\Big\|_{2q}\Big\|\E[\eta^j(r+\frac{s}{\ep})|\F^{s/\ep}]\Big\|_{2q}dr\nonumber\\&\leq&\Big\|\eta^i(\frac{s}{\ep})\Big\|_{2q}\int_{0}^{\infty}m(r)^{\frac{1}{6q}}\Big\|\eta^j(r+\frac{s}{\ep})\Big\|_{3q}dr\nonumber\\&\leq&\|\eta(0)\|_{2q}\cdot\|\eta(0)\|_{3q}\int_0^\infty m(r)^{\frac{1}{6q}}dr,
	\end{eqnarray*}
	and this finishes the proof for (\ref{eqforprop1}). 
	
	Lastly we deal with the term $\intot E^\ep_rdr.$ By triangular inequality, it suffices to show $\supt\|E^\ep_r\|_p=0.$ However, this limit is an immediate consequence of (\ref{eqforprop1}) and Lemma \ref{lemma-vanishing limit(1)}.
\end{proof}

\begin{lemma}\label{lemma-UI}
	Let $\pe$ be as in Lemma \ref{lemma-finding Z(1)}. Suppose that
	$$\sup_{0\leq t\leq T, 0\leq s<\infty}\|\pe(s,t)\|_p\leq C_{p,\ep,T}$$ for each fixed $p\in [1,\infty)$ and $T\in [0,\infty),$ and
	 $$\sup_{0\leq\tau<\infty}\int_{0}^\infty|\E\partial_t\pe(s,\tau)|ds\leq C_\ep.$$ Then the family of random variables $$\{h^{-1}\E[\ye(t+h)-\ye(t)|\F^{t/\ep}]:0<h<T, 0<t\leq T-h\}$$ is uniformly integrable for each $\ep\in (0,1].$
\end{lemma}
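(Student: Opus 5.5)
We reuse the decomposition \eqref{noname4} from the proof of Lemma \ref{lemma-finding Z(1)}:
$$h^{-1}\E[\ye(t+h)-\ye(t)|\F^{t/\ep}]=h^{-1}A^\ep_t(h)-h^{-1}B^\ep_t(h),$$
with
$$A^\ep_t(h)=\int_{t+h}^\infty\int_t^{t+h}\E(\partial_t\pe(s,\tau)|\F^{t/\ep})\,d\tau\, ds,\qquad B^\ep_t(h)=\int_t^{t+h}\E(\pe(s,t)|\F^{t/\ep})\,ds.$$
Since $\ep$ is fixed, the plan is to show that both families are bounded in $L^2$ (or in $L^{p'}$ for some $p'>1$) uniformly in $0<h<T$ and $0<t\le T-h$. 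Uniform integrability then follows from de la Vallée-Poussin's criterion.

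\emph{The term $B$.} By Jensen's inequality for the average $h^{-1}\int_t^{t+h}\,ds$ and the contractivity of conditional expectation in $L^2$,
$$\|h^{-1}B^\ep_t(h)\|_2\le h^{-1}\int_t^{t+h}\|\pe(s,t)\|_2\,ds\le C_{2,\ep,T}.$$
This uses the first hypothesis with $p=2$, which covers all $t\le T$ and all $s\ge0$.

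\emph{The term $A$.} Here integrability in $s$ over the infinite range is needed, and the second hypothesis only controls $|\E\partial_t\pe|$, not $\E|\partial_t\pe|$. The intended route is to mirror Lemma \ref{lemma-vanishing limit(1)}. In the applications $\partial_t\pe(s,\tau)$ is a product of an $\F^{\tau/\ep}$-measurable factor and a factor measurable with respect to the future $\sigma$-field at time $s/\ep$. One would then estimate $\|\E(\partial_t\pe(s,\tau)|\F^{t/\ep})\|_2$ by splitting off its mean: the centred part is handled by Lemma \ref{lemma-vanishing limit(2)}, giving decay like $m((s-\tau)/\ep)^{1/2-1/k}$, which is integrable in $s$; the mean is bounded by $|\E\partial_t\pe(s,\tau)|$, whose $s$-integral is at most $C_\ep$ by hypothesis. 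Combining these,
$$\|h^{-1}A^\ep_t(h)\|_2\le h^{-1}\int_t^{t+h}\int_{t}^\infty\|\E(\partial_t\pe(s,\tau)|\F^{t/\ep})\|_2\,ds\,d\tau\le C_\ep,$$
uniformly in $h$ and $t$.

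\emph{Main obstacle.} The difficulty is justifying the $s$-integrable $L^2$ bound on $\E(\partial_t\pe(s,\tau)|\F^{t/\ep})$, since the lemma is stated abstractly. As a first attempt, one could argue in $L^1$ instead. A uniform $L^1$ bound on $h^{-1}A^\ep_t(h)$ alone is not enough for uniform integrability, so one would also need a domination argument: show that the integrands $\int_t^\infty|\E(\partial_t\pe(s,\tau)|\F^{t/\ep})|\,ds$ form a uniformly integrable family indexed by $\tau$, and then use that averages (in $\tau$) of a uniformly integrable family remain uniformly integrable. If neither route closes in full generality, the fallback is to verify the needed mixing-decay bound directly for the concrete $\pe$ used to define $\ye_1$ and $\ye_2$. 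There $\partial_t\pe$ is exactly of the product form above, so Lemma \ref{lemma-vanishing limit(2)} together with ($\mathbf{A}_2$) supplies the integrability in $s$.
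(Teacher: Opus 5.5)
\textbf{Verdict: the gap you flag is genuine, and it cannot be closed from the stated hypotheses, because the statement as written is false. The paper's proof has the same hole.}

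Your outline is the paper's proof. It uses the same splitting $A^\ep_t(h)-B^\ep_t(h)$ and the same $L^p$ bound on $h^{-1}B^\ep_t(h)$ from the first hypothesis. For $A^\ep_t(h)$ it uses the same split of $\partial_t\pe(s,\tau)$ into its mean (handled by $\sup_\tau\int_0^\infty|\E\partial_t\pe(s,\tau)|ds$) and a centred part sent to Lemma \ref{lemma-vanishing limit(2)}.

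A counterexample to the statement:
\begin{itemize}
\item Take pairwise disjoint $E_n\in\F^0$ with $P(E_n)=2^{-n}$, and pairwise disjoint intervals $I_n\subset(0,T)$ of length $T4^{-n}$.
\item Take smooth $0\le\rho_n\le 1$ supported in $I_n$ and equal to $1$ on the middle third $J_n$ of $I_n$.
\item Set $\psi(t)=\sum_n 2^n\mathbf{1}_{E_n}\int_0^t\rho_n(r)dr$ and $\pe(s,t)=e^{-s}\psi(t)$.
\end{itemize}
Then $\pe(s,t)\in\F^0$, $|\pe|\le T$, $\E|\partial_t\pe(s,t)|\le\sum_n\rho_n(t)\le 1$ and $\int_0^\infty|\E\partial_t\pe(s,\tau)|ds\le 1$, so every hypothesis holds. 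But $\ye(t)=e^{-t}\psi(t)$, and for $[t,t+h]=J_n$ the family contains $e^{-(t+h)}2^n\mathbf{1}_{E_n}+\mathcal{O}(1)$. This is bounded in $L^1$ but not uniformly integrable. The same example rules out your $L^1$/domination alternative.

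The paper gets through this step only by two moves that are not justified:
\begin{itemize}
\item It uses $\sup_{s,t}\|\partial_t\pe(s,t)\|_{2p}<\infty$, which is not assumed; Lemma \ref{lemma-finding Z(1)} gives only an $L^1$ bound.
\item It applies Lemma \ref{lemma-vanishing limit(2)} to the centred part after noting that it is $\F^{s/\ep}$-measurable. That lemma needs measurability with respect to the future field $\F^\infty_{s/\ep}$.
\end{itemize}
So a correct version must add a higher-moment bound on $\partial_t\pe$ and an $s$-integrable decay hypothesis, or verify the estimate in each application.

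\textbf{Case-by-case route.} Your centring step does not work as described. Even if $\partial_t\pe(s,\tau)=X_\tau Y_s$ with $X_\tau\in\F^{\tau/\ep}$ and $Y_s\in\F^\infty_{s/\ep}$, the variable $X_\tau Y_s-\E[X_\tau Y_s]$ is not future-measurable. What works is to condition through $\F^{\tau/\ep}$ first,
\[\E(X_\tau Y_s\,|\,\F^{t/\ep})=\E\bigl(X_\tau\,\E(Y_s|\F^{\tau/\ep})\bigm|\F^{t/\ep}\bigr),\]
and apply Lemma \ref{lemma-vanishing limit(2)} to the centred factor $Y_s$ alone. This gives the bound $\|X_\tau\|_{2p}\,m(\frac{s-\tau}{\ep})^{\frac{1}{2p}-\frac{1}{k}}\|Y_s\|_k$, which is integrable over $s\ge t+h$. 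No separate mean term appears, since $\E Y_s=0$ in all applications. That settles $\ye_1$ and $\ye_3$, where $Y_s=\eta^{\ep,j}_s$.

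For $\ye_2$ your product description fails, since $C^{\ep,ij}_s$ is $\F^{s/\ep}$-measurable rather than future-measurable. The decay survives, though. By the tower property,
\[\E(C^{\ep,ij}_s-A^{ij}|\F^{\tau/\ep})=\frac{1}{\ep}\int_s^\infty\E\bigl(\eta^{\ep,i}_s\eta^{\ep,j}_r-R^{ij}(\tfrac{r-s}{\ep})\bigm|\F^{\tau/\ep}\bigr)dr,\]
and the integrand is centred and $\F^\infty_{s/\ep}$-measurable.

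The lemma is also used for $\ye_4$ in Section \ref{section-limitofX}. There $Q^{\ep,ij}_s$ depends on $F(\xe)$ through $R^\ep_s$, and no such decay is evident.
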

\begin{proof}
	In order to show the desired uniform integrability, it suffices to show the family above is bounded in $L^p(\Omega,\F,P)$ for some $p>1$ \cite[Sec 13.3]{WIL91}. By (\ref{noname4}) and the fourth line of (\ref{noname5}),
	\begin{equation}
		\E[\ye(t+h)-\ye(t)|\F^{t/\ep}]=A^\ep_t(h)-B^\ep_t(h),
	\end{equation}
	where
	\begin{equation}
		A^\ep_t(h)=\int_t^{t+h}\int_{t+h}^\infty\E(\partial_t\pe(s,\tau)|\F^{t/\ep})dsd\tau,
	\end{equation}
	and
	\begin{equation}
		B^\ep_t(h)=\int_t^{t+h}\E(\pe(s,t)|\F^{t/\ep})ds.
	\end{equation}
	By conditional Jensen inequality,
	\begin{eqnarray}\label{noname2}
		\|h^{-1}B^\ep_t(h)\|_p&\leq& h^{-1}\int_t^{t+h}\|\E(\pe(s,t)|\F^{t/\ep})\|_pds\nonumber\\&\leq&\sup_{0\leq t\leq T, 0\leq s<\infty}\|\E(\pe(s,t)|\F^{t/\ep})\|_p\nonumber\\&\leq&\sup_{0\leq t\leq T,0\leq s<\infty}\|\pe(s,t)\|_p\leq C_{p,\ep,T}.
	\end{eqnarray}
	Writing $$\partial_t\pe(s,\tau)=[\partial_t\pe(s,\tau)-\E\partial_t\pe(s,\tau)]+\E\partial_t\pe(s,\tau)=:Z^\ep_1(s,\tau)+Z^\ep_2(s,\tau),$$ so that
	\begin{eqnarray}\label{noname1}
		&&\|h^{-1}A^\ep_t(h)\|_p\nonumber\\&\leq&h^{-1}\int_t^{t+h}\int_{t+h}^\infty\|\E(\partial_t\pe(s,\tau)|\F^{t/\ep})\|_pdsd\tau\nonumber\\&\leq&h^{-1}\int_t^{t+h}\int_{t+h}^\infty\|\E(\ze_1(s,\tau)|\F^{t/\ep})\|_p+\|\E(\ze_2(s,\tau)|\F^{t/\ep})\|_pdsd\tau.
	\end{eqnarray}
	Since $s\geq t+h\geq\tau\geq t$ on the right hand side of (\ref{noname1}), $\ze_1(s,\tau)\in\F^{s/\ep}$. Applying Lemma \ref{lemma-vanishing limit(2)} with $k=2p$, 
	\begin{eqnarray}\label{noname3}
		&&h^{-1}\int_t^{t+h}\int_{t+h}^\infty\|\E(\ze_1(s,\tau)|\F^{t/\ep})\|_pdsd\tau\nonumber\\&\lesssim&h^{-1}\int_t^{t+h}\int_{t+h}^\infty m(\frac{s-t}{\ep})^{\frac{1}{p}-\frac{1}{k}}\|\partial_t\pe(s,\tau)\|_kdsd\tau\nonumber\\&\leq&h^{-1}\sup_{0\leq t\leq T,0\leq s<\infty}\|\partial_t\pe(s,t)\|_k\int_t^{t+h}\int_{t}^\infty m(\frac{s-t}{\ep})^{\frac{1}{p}-\frac{1}{k}}dsd\tau\nonumber\\&=&\ep\sup_{0\leq t\leq T, 0\leq s<\infty}\|\partial_t\pe(s,t)\|_k\int_0^\infty m(s)^\frac{1}{2p}ds\nonumber\\&\leq&C_{p,\ep,T},
	\end{eqnarray}
	and plainly,
	\begin{eqnarray}\label{noname3.5}
		&&h^{-1}\int_t^{t+h}\int_{t+h}^\infty\|\E(\ze_2(s,\tau)|\F^{t/\ep})\|_pdsd\tau\nonumber\\&=&h^{-1}\int_t^{t+h}\int_{t+h}^\infty|\E\partial_t\pe(s,\tau)|dsd\tau\nonumber\\&\leq&h^{-1}\int_t^{t+h}\sup_{0\leq\tau<\infty}\int_{0}^\infty|\E\partial_t\pe(s,\tau)|dsd\tau\nonumber\\&=&\sup_{0\leq\tau<\infty}\int_{0}^\infty|\E\partial_t\pe(s,\tau)|ds\leq C_\ep.
	\end{eqnarray}
	Combining (\ref{noname2})--(\ref{noname3.5}), we conclude that
	\begin{equation}
		\|h^{-1}\E[\ye(t+h)-\ye(t)|\F^{t/\ep}]\|_p\leq C_{p,\ep,T},
	\end{equation}
	which implies the desired uniform integrability.
\end{proof}
\noindent\emph{Proof of Theorem \ref{main1}:}

Let $$\me_t:=\Big[\ye_1(t)-\ye_1(0)-\intot\ze_1(s)ds\Big]+\Big[\ye_2(t)-\ye_2(0)-\intot\ze_2(s)ds\Big].$$
By Proposition \ref{prop-kurtz} and Lemma \ref{lemma-UI}, $\me$ is a $\bbG^\ep$-martingale. Write
\begin{eqnarray}\label{me1}
	&&\me_t\nonumber\\&=&\vp(\ue_t,\bbue_t)-\vp(0,0)+\ye_1(t)-\ye_1(0)+\ye_2(t)-\ye_2(0)\nonumber\\&&-\intot\Big\{\frac{d}{dr}\vp(\ue_r,\bbue_r)+\ze_1(r)+\ze_2(r)\Big\}dr.
\end{eqnarray}
By Lemma \ref{lemma-derivative} and Proposition \ref{prop-finding Z(2)}, 
\begin{eqnarray}\label{me2}
	&&\intot\frac{d}{dr}\vp(\ue_r,\bbue_r)+\ze_1(r)+\ze_2(r)dr\nonumber\\&=&\intot\Big\{ \frac{1}{\sqrt{\ep}}\Phi(\ue_r,\bbue_r,\ee_r)+\Big[-\frac{1}{\sqrt{\ep}}\Phi(\ue_r,\bbue_r,\ee_r)+\sum_{i,j}\Theta^{ij}(\ue_r,\bbue_r)C^{\ep,ij}_r\Big]\nonumber\\&&+E^\ep_r-\sum_{i,j}\Theta^{ij}(\ue_r,\bbue_r)(C^{\ep,ij}_r-A^{ij})\Big\}dr\nonumber\\&=&\intot\Big\{\sum_{i,j}\Theta^{ij}(\ue_r,\bbue_r)A^{ij}+E^\ep_r\Big\}dr.
\end{eqnarray}
	Substituting (\ref{me2}) into (\ref{me1}),
	\begin{eqnarray}
		&&\me_t\nonumber\\&=&\vp(\ue_t,\bbue_t)-\vp(0,0)+\ye_1(t)-\ye_1(0)+\ye_2(t)-\ye_2(0)\nonumber\\&&-\intot\sum_{i,j}\Theta^{ij}(\ue_r,\bbue_r)A^{ij}+E^\ep_rdr.
	\end{eqnarray}
	Letting $\ep\to 0$, applying Proposition \ref{prop-vanishing limit} and passing to a subsequence, we obtain that
	\begin{equation}
		M_t:=a.s.-\lim_{\ep\to 0}\me_t=\vp(U_t,\mathbb{U}_t)-\vp(0,0)-\intot \sum_{i,j}\Theta^{ij}(U_r,\mathbb{U}_r)A^{ij}dr.
	\end{equation}
	 We claim that $M$ is an $\bar{\F}^{\mathbf{U}}$-martingale, where $\F^{\mathbf{U}}_t:=(\sigma(\mathbf{U}_r):0\leq r\leq t),$ and $\bar{\F}^{\mathbf{U}}$ is the usual augmentation of $\F^\mathbf{U}$ \cite[(67.3) in Ch. II]{RW00}. Indeed, since $L^1(\Omega,\F,P)$-convergent family is uniformly integrable (UI) \cite[Sec 13.7]{WIL91}, we conclude from Lemma \ref{prop-vanishing limit} that $(\ye_i(t))_{0<\ep\leq 1}$ and $(\intot E^\ep_rdr)_{0<\ep\leq 1}$ are UI, $i=1,2$. Since $\vp\in C_c^\infty,$ we have $\supe|\Theta^{ij}(\ue_t,\bbue_t)|\leq C_t$, so $(\intot\sum\limits_{i,j}\Theta^{ij}(\ue_r,\bbue_r)A^{ij}dr)_{0<\ep\leq 1}$ is UI. Therefore, $(\me_t)_{0<\ep\leq 1}$ is UI for each $t\in [0,T]$. In order to show $M$ is a $\F^{\mathbf{U}}$-martingale, by functional monotone class theorem, it suffices to show $\E((M_t-M_s)\cdot H)=0$, for all $H$ with the form $H=h(\mathbf{U}_{t_1},...,\mathbf{U}_{t_k})$, where $0\leq t_1<\dots<t_k\leq s$ and $h\in C_c^\infty(\mathbb{R}^{k(d+d^2)},\mathbb{R}^d)$. Set $H^\ep:=h(\bfue_{t_1},...,\bfue_{t_k})$. Obviously $H^\ep\in\F^{s/\ep}$. Since $\me$ is a $\bbG^\ep$-martingale, $\E[(\me_t-\me_s)\cdot H^\ep]=0$. By UI property of $\me$ and Vitali convergence theorem, $\E[(M_t-M_s)\cdot H]=0$. Since $M$ is continuous, it is also a $\bar{\F}^{\mathbf{U}}$-martingale \cite[(67.6) in Ch. II]{RW00}. 
	 
	  	Recall that $S^{ij}=A^{ij}+A^{ji}$. By martingale representation theorem, there is a standard Brownian motion $B$ such that
	$$U_t=S^{1/2} B_t,\quad\mathbb{U}_{0,t}=\intot U_r\otimes dU_r+At=\intot U_r\otimes\circ dU_r+\frac{A-A^{\top}}{2}t.$$  
	By Chen relation,
	$$\quad\mathbb{U}_{s,t}=\int_s^t U_{s,r}\otimes\circ dU_r+\frac{A-A^{\top}}{2}(t-s).$$ \qed
	
\section{Smoluchowski--Kramers Approximation}\label{section-limitofX}
We establish the SK approximation resulf for $\bfxe$. As we mentioned before, the structure of proof is to some extent parallel to that in Section \ref{section-limitofU}, but is more subtle.

 Similar to Lemma \ref{lemma-derivative}, we have the following lemma, the proof of which is a direct computation.
\begin{lemma}\label{lemma-derivative2} Let $\Phi$, $\Theta$ and $\Xi$ be defined as in Lemma \ref{lemma-derivative}. We have the following identities:
	
	(i) $\frac{d}{dt}\vp(\xe_t,\bbxe_t)=\Phi(\xe_t,\bbxe_t,\ve_t).$
	
	(ii) $\frac{d}{dt}\Phi(\xe_t,\bbxe_t,y)=\sum\limits_{i,j}\Theta^{ij}(\xe_t,\bbxe_t)V^{\ep,i}_ty^j.$
	
	(iii)
	$\frac{d}{dt}\Theta^{ij}(\xe_t,\bbxe_t)=\Xi^{ij}(\xe_t,\bbxe_t,\ve_t).$
\end{lemma}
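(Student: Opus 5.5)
This is a direct chain-rule computation on the bounded-variation path $t\mapsto(\xe_t,\bbxe_t)$, exactly parallel to Lemma \ref{lemma-derivative}. The difference is that $\ue$ (with $\dot U^\ep=\ep^{-1/2}\ee$) is replaced by $\xe$ (with $\dot X^\ep=\ve$), so no factor $\ep^{-1/2}$ appears.

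First we record the time derivatives of the lifted path. By \eqref{SKxv}, $\xe$ is $C^1$ with $\dot X^\ep_t=\ve_t$. Since $\bbxe_t:=\bbxe_{0,t}=\int_0^t\xe_{r}\otimes\ve_r\,dr$ (recall $\xe_0=0$), we get $\frac{d}{dt}\mathbb{X}^{\ep,ij}_t=X^{\ep,i}_tV^{\ep,j}_t$. Both derivatives exist for every $t$, because $\ve$ is continuous: $\eta$ has $C^1$ trajectories, so $\dot V^\ep$ is continuous as well.

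Next we prove (i). Applying the classical chain rule to $\vp\in C_c^\infty$ gives
$$\frac{d}{dt}\vp(\xe_t,\bbxe_t)=\sum_i\partial_{x_i}\vp\,V^{\ep,i}_t+\sum_{i,j}\partial_{X_{ij}}\vp\,X^{\ep,i}_tV^{\ep,j}_t,$$
which is precisely $\Phi(\xe_t,\bbxe_t,\ve_t)$ by the definition of $\Phi$.

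For (ii), fix $y$ and differentiate $\Phi(\xe_t,\bbxe_t,y)$ by the same rule. Differentiating the term $\partial_{x_i}\vp\,y^i$ produces
$$\sum_j\partial^2_{x_jx_i}\vp\,V^{\ep,j}y^i+\sum_{k,l}\partial^2_{X_{kl}x_i}\vp\,X^{\ep,k}V^{\ep,l}y^i.$$
Differentiating the term $\partial_{X_{ij}}\vp\,x^iy^j$ produces three contributions: a derivative of $\partial_{X_{ij}}\vp$ in $x$, a derivative of $\partial_{X_{ij}}\vp$ in $X$, and the term $\partial_{X_{ij}}\vp\,V^{\ep,i}y^j$ coming from $\dot x^i$.

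The only bookkeeping step is to relabel indices so that every term has the form $(\cdot)V^{\ep,i}y^j$, and then to check that the coefficient is exactly $\Theta^{ij}$:

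- The $\partial^2_{x_ix_j}\vp$ term comes from the first piece after swapping $i$ and $j$.
- The $\partial^2_{X_{ki}x_j}\vp\,x^k$ term comes from the mixed second piece, taking $l=i$ and renaming.
- The $\partial^2_{x_iX_{kj}}\vp\,x^k$ term comes from the $x$-derivative of $\partial_{X_{kj}}\vp$.
- The $\partial^2_{X_{ki}X_{lj}}\vp\,x^kx^l$ term comes from the $X$-derivative.
- The $\partial_{X_{ij}}\vp$ term comes from $\dot x^i=V^{\ep,i}$.

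This matches the definition of $\Theta^{ij}$. Equality of mixed partials is available because $\vp$ is smooth.

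For (iii), differentiating $\Theta^{ij}(\xe_t,\bbxe_t)$ by the chain rule gives
$$\sum_k\partial_{x_k}\Theta^{ij}\,V^{\ep,k}+\sum_{k,l}\partial_{X_{kl}}\Theta^{ij}\,X^{\ep,k}V^{\ep,l}=\Xi^{ij}(\xe_t,\bbxe_t,\ve_t).$$

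The only place care is needed is the index matching in (ii). Since that is identical to the computation behind Lemma \ref{lemma-derivative}(ii), we can also derive it formally: Lemma \ref{lemma-derivative} depends only on the structure $\frac{d}{dt}(x,X)=(\dot x,x\otimes\dot x)$, applied there with $\dot x=\ep^{-1/2}\ee$. Substituting $\dot x=\ve$ removes the $\ep^{-1/2}$ prefactors and yields all three identities at once.
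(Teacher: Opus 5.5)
Your proposal is correct and matches the paper, which gives no argument beyond calling the lemma a direct computation. That computation is exactly yours: apply the chain rule along $(\xe_t,\bbxe_t)$ with $\dot X^\ep_t=\ve_t$ and $\frac{d}{dt}\bbxe_t=\xe_t\otimes\ve_t$ (using $\xe_0=0$), then relabel indices to read off $\Theta^{ij}$, which is Lemma \ref{lemma-derivative} with $\ep^{-1/2}\ee$ replaced by $\ve$.
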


In order to construct martingale by Proposition \ref{prop-kurtz}, we need the following technical result.
\begin{prop}\label{prop-prepareformartingale}
	(i) Let
	$$Y^\ep_3(t):=\frac{1}{\sqrt{\ep}}\int_t^\infty \E(\Phi(\xe_t,\bbxe_t,\ee_s)|\F^{t/\ep})ds,$$
	and
	$$\ze_3(t):=\lim_{h\to 0}\frac{1}{h}\E(\ye_3(t+h)-\ye_3(t)|\F^{t/\ep})\enspace \text{in probability},$$ then
	$$\ze_3(t)=-\frac{1}{\sqrt{\ep}}\Phi(\xe_t,\bbxe_t,\ee_t)+\sum_{i,j}\Theta^{ij}(\xe_t,\bbxe_t)W^{\ep,i}_tD^{\ep,j}_t\enspace\text{a.s.},$$ where $D^{\ep,j}_t:=\frac{1}{\ep}\int_t^\infty\E(\eta^{\ep,j}_s|\F^{t/\ep})ds.$
	
	(ii)
	Set $Q^{\ep,ij}_s:=W^{\ep,i}_sW^{\ep,j}_s+W^{\ep,i}_sD^{\ep,j}_s-\E(W^{\ep,i}_sW^{\ep,j}_s+W^{\ep,i}_sD^{\ep,j}_s)$. Let
	$$Y^\ep_4(t):=\int_t^\infty\E(\sum_{i,j}\Theta^{ij}(\xe_t,\bbxe_t)Q^{\ep,ij}_s|\F^{t/\ep})ds,$$ and$$\ze_4(t):=\lim_{h\to 0}\frac{1}{h}\E(\ye_4(t+h)-\ye_4(t)|\F^{t/\ep})\enspace \text{in probability},$$
	then $$\ze_4(t)=\mathcal{E}^\ep_t-\sum_{i,j}\Theta^{ij}(\xe_t,\bbxe_t)Q^{\ep,ij}_t,$$ where
	$$\mathcal{E}^\ep_t=\frac{1}{\sqrt{\ep}}\sum_{i,j}\Xi^{ij}(\xe_t,\bbxe_t,\we_t)\int_t^\infty\E(Q^{\ep,ij}_s|\F^{t/\ep})ds.$$
	
	(iii) The family of random variables $$\{h^{-1}\E[\ye_k(t+h)-\ye_k(t)|\F^{t/\ep}]:0<h<T, 0<t<T-h\}$$ is uniformly integrable for each $\ep\in (0,1],$ $k=3,4.$
	
	(iv) For each $p\geq 1$ and $t\in [0,T]$,
	$$\lim_{\ep\to 0}\|\ye_3(t)\|_p\vee\|\ye_4(t)\|_p\vee\Big\|\intot\mathcal{E}^\ep_rdr\Big\|_p=0.$$
\end{prop}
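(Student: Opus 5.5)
The plan is to mirror Section \ref{section-limitofU}, with Lemma \ref{lemma-derivative2} in place of Lemma \ref{lemma-derivative} and the decomposition $\ve=\epi^{1/2}\we$. One bookkeeping point needs care before starting. Since $\we$ is $\sqrt{\ep}\ve$, the derivative $\frac{d}{dt}\vp(\xe_t,\bbxe_t)$ produces $\Phi(\xe_t,\bbxe_t,\ve_t)$. The force $\ee$ enters only through $d\ve=\epi(-\ve+F+\epi^{1/2}\ee)dt$, so every $\partial_t$ that falls on $\xe,\bbxe$ produces a factor $\ve_t=\ep^{-1/2}\we_t$.

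\textbf{Parts (i) and (ii).} For (i) I will apply Lemma \ref{lemma-finding Z(1)} with $\pe(s,t)=\ep^{-1/2}\Phi(\xe_t,\bbxe_t,\ee_s)$. This is $\F^{(s\vee t)/\ep}$-measurable. By Lemma \ref{lemma-derivative2}(ii), $\partial_t\pe(s,t)=\ep^{-1}\sum_{i,j}\Theta^{ij}(\xe_t,\bbxe_t)W^{\ep,i}_t\eta^{\ep,j}_s$. The bound \eqref{assumptionforlemma1} for fixed $\ep$ follows from Proposition \ref{prop-momentestimateforv}, the boundedness of $\Theta$ (since $\vp\in C_c^\infty$), and ($\mathbf{A}_1$). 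Because $\Theta^{ij}W^{\ep,i}_t$ is $\F^{t/\ep}$-measurable, it can be pulled out of the conditional expectation, and this leaves exactly $\Theta^{ij}W^{\ep,i}_tD^{\ep,j}_t$ minus $\pe(t,t)$. For (ii) I take $\pe(s,t)=\sum\Theta^{ij}(\xe_t,\bbxe_t)Q^{\ep,ij}_s$. Lemma \ref{lemma-derivative2}(iii) gives $\partial_t\pe=\ep^{-1/2}\sum\Xi^{ij}(\xe_t,\bbxe_t,\we_t)Q^{\ep,ij}_s$. Pulling out $\Xi$ in the same way yields $\mathcal{E}^\ep_t$.

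\textbf{Part (iii).} Here I will check the hypotheses of Lemma \ref{lemma-UI}. The $L^p$ bounds on $\pe$ and $\partial_t\pe$ come from Proposition \ref{prop-momentestimateforv} together with a uniform bound $\sup_s\|D^{\ep,j}_s\|_q<\infty$. That bound is obtained exactly as \eqref{eqforprop1}, by the change of variables $D^{\ep,j}_s=\int_0^\infty\E(\eta^j(r+s/\ep)|\F^{s/\ep})dr$ followed by Lemma \ref{lemma-vanishing limit(2)}. For the second hypothesis, $\E\partial_t\pe(s,\tau)=0$ for $k=4$ because $Q$ is centered and $\Xi$ can be conditioned out. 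This is not literally true, since $\E[\Xi\, Q_s]\neq\E\Xi\,\E Q_s$, so instead I bound $|\E\partial_t\pe(s,\tau)|=|\E[\Xi\,\E(Q_s|\F^{\tau/\ep})]|$ by Lemma \ref{lemma-vanishing limit(2)}. This gives decay $m((s-\tau)/\ep)^{\theta}$, which is integrable in $s$ uniformly in $\tau$. The case $k=3$ is identical, using $\E\ee_s=0$.

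\textbf{Part (iv).} The terms $\ye_3$ and $\ye_4$ go to zero by Lemma \ref{lemma-vanishing limit(1)}. For $\ye_3$ take $\Ae=\partial_{x}\vp$-type coefficients, which are bounded, and $\Be=\ee_s$. For $\ye_4$, $\ye_4=\sqrt{\ep}\cdot\ep^{-1/2}\int\ldots$, so the same lemma applies with $\Ae=\Theta^{ij}$ and $\Be=Q^{\ep,ij}$; the extra $\sqrt{\ep}$ only helps. This requires $\sup_{\ep,s}\|Q^{\ep,ij}_s\|_q<\infty$, which follows from Proposition \ref{prop-momentestimateforv} and the $D$-bound. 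For $\intot\mathcal{E}^\ep_rdr$ I use Lemma \ref{lemma-vanishing limit(1)} with $\Ae=\Xi^{ij}(\xe,\bbxe,\we)$, which has bounded moments by Proposition \ref{prop-momentestimateforv}, and $\Be=Q^{\ep,ij}$.

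\textbf{Main obstacle.} The hard step is the adaptedness needed in Lemmas \ref{lemma-vanishing limit(2)} and \ref{lemma-vanishing limit(1)}: one needs $Q^{\ep}_s\in\F^{s/\ep}$ and mixing decay of $\E(Q_s|\F^{t/\ep})$ in $(s-t)/\ep$. The difficulty is that $\we_s$ contains $\xe$ through $R^\ep_s$, and contains the whole past of $\eta$ through $\zeta^\ep_s=\xi(s/\ep)$. My plan is to split $\we=\zeta^\ep+R^\ep$. The $R^\ep$-contributions are $\mathcal{O}(\sqrt{\ep})$ in every $L^p$ by \eqref{esforre}, so they can be estimated crudely: their integral against the $\ep^{-1/2}$ prefactor over $s\in[t,T']$ is absorbed after the change of variables.

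For the $\zeta^\ep$-part, I would replace $\xi$ by the stationary $\xi^*$ at the cost of a term proportional to $e^{-s/\ep}$. I would then approximate $\xi^*(s/\ep)$ by $\int_0^{L}e^{-u}\eta(s/\ep-u)du$, which is measurable with respect to $\F_{s/\ep-L}^{s/\ep}$, with $L=(s-t)/(2\ep)$; the error is of size $e^{-L}$. Applying Lemma \ref{lemma-vanishing limit(2)} over the remaining gap of length $(s-t)/(2\ep)$ then yields exponential decay. This truncation is the step I would work out first.
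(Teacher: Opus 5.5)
Your choices in (i)--(iv) are the paper's: the same $\pe$ in Lemma \ref{lemma-finding Z(1)}, Lemma \ref{lemma-UI} for (iii), and Lemma \ref{lemma-vanishing limit(1)} for (iv). Your ``main obstacle'' is real, and the paper's proof passes over it. Lemma \ref{lemma-vanishing limit(1)} is proved through Lemma \ref{lemma-vanishing limit(2)}, which needs $\Be_s$ to be $\F^\infty_{s/\ep}$-measurable. That holds for $\ee_s$, so $\ye_3$ is fine, but it fails for $Q^{\ep,ij}_s$, and the paper only checks moment bounds for $Q$.

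Your repair breaks at the $R^\ep$-step. A bound $\|\E(\,\cdot\,|\F^{t/\ep})\|_r\lesssim\sqrt{\ep}$ with no decay in $s-t$ gives $\ep^{-1/2}\int_t^{T'}\sqrt{\ep}\,ds=T'-t$, which is $\mathcal{O}(1)$, not $o(1)$. The substitution $s=t+\ep u$ gains nothing unless the integrand decays in $u$. Moreover, the integrals defining $\ye_4$ and $\mathcal{E}^\ep_t$ run over $s\in[t,\infty)$, where a non-decaying bound is not even integrable. Fast decay is genuinely absent here. Since $R^\ep_s\approx\sqrt{\ep}F(\xe_s)$, one has $R^{\ep,i}_sR^{\ep,j}_s-\E(\cdot)\approx\ep\,[F^iF^j(\xe_s)-\E F^iF^j(\xe_s)]$. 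Its conditional mean given $\F^{t/\ep}$ relaxes only on the slow scale $s-t$, and with no ergodicity assumed on the limiting motion, at no integrable rate. So with $Q$ built from $\we$, the integral $\int_t^\infty\E(Q^{\ep,ij}_s|\F^{t/\ep})ds$ need not converge. The same problem undermines your bound on $\sup_\tau\int|\E\partial_t\pe(s,\tau)|ds$ in (iii) for $k=4$.

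The missing idea is to leave the $R^\ep$-terms uncorrected. Write $\we=\zeta^\ep+R^\ep$ as in \eqref{decompowe}. The pieces $\Theta^{ij}(\zeta^{\ep,i}R^{\ep,j}+R^{\ep,i}\zeta^{\ep,j}+R^{\ep,i}R^{\ep,j})$ in \eqref{noname9} and $\Theta^{ij}R^{\ep,i}D^{\ep,j}$ in $\ze_3$ are $\mathcal{O}(\sqrt{\ep})$ in every $L^p$, uniformly in time, by \eqref{esforze}, \eqref{esforre} and \eqref{esforD}. Their time integrals therefore vanish without any corrector. Only $\zeta^{\ep,i}\zeta^{\ep,j}+\zeta^{\ep,i}D^{\ep,j}$ minus its mean should enter $Q$. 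This changes $Q$, $\ye_4$ and $\mathcal{E}^\ep$ relative to the statement, though the proof of Lemma \ref{lemma-bargamma} shows the limit $\bar{\Gamma}$ is unaffected.

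Even for this $\eta$-functional, your truncation of $\xi^*$ covers $\zeta\zeta$ but not $\zeta D$. The factor $D^{\ep,j}_s$ is a conditional expectation given the whole past $\F^{s/\ep}$, and strong mixing gives no rate for approximating it by a finite-window variable. Instead, for $t\le s$, use the tower property:
$\E(\zeta^{\ep,i}_sD^{\ep,j}_s|\F^{t/\ep})=\epi\int_s^\infty\E(\zeta^{\ep,i}_s\eta^{\ep,j}_r|\F^{t/\ep})dr$.
Inside this integral, truncate $\zeta^{\ep,i}_s$ to a window of length $(s-t)/(2\ep)$. Then bound the centred integrand by the smaller of $m((r-s)/\ep)^{\theta}$ and $e^{-(s-t)/(2\ep)}+m((s-t)/(2\ep))^{\theta}$. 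Integrating in $r$ gives decay $\lesssim(1+(s-t)/\ep)e^{-c(s-t)/\ep}$, which is what the $\ep^{-1/2}$ prefactor in $\mathcal{E}^\ep$ requires.
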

\begin{proof}
	(i) By Lemma \ref{lemma-derivative2},
	\begin{eqnarray*}
		\frac{1}{\sqrt{\ep}}\partial_t[\Phi(\xe_t,\bbxe_t,\ee_s)]=\frac{1}{\sqrt{\ep}}\sum_{i,j}\Theta^{ij}(\xe_t,\bbxe_t)V^{\ep,i}_t\eta^{\ep,j}_s=\frac{1}{\ep}\sum_{i,j}\Theta^{ij}(\xe_t,\bbxe_t)W^{\ep,i}_t\eta^{\ep,j}_s,
	\end{eqnarray*}
	and (i) is proved by applying Lemma \ref{lemma-finding Z(1)}.
	
	(ii) By Lemma \ref{lemma-derivative2} and linearity of $\Xi^{ij}$ with respect to the third argument,
	\begin{eqnarray*}
		&&\partial_t[\sum_{i,j}\Theta^{ij}(\xe_t,\bbxe_t)Q^{\ep,ij}_s]=\sum_{i,j}\Xi^{ij}(\xe_t,\bbxe_t,\ve_t)Q^{\ep,ij}_s=\frac{1}{\sqrt{\ep}}\sum_{i,j}\Xi^{ij}(\xe_t,\bbxe_t,\we_t)Q^{\ep,ij}_s,
	\end{eqnarray*}
	and (ii) is proved by applying Lemma \ref{lemma-finding Z(1)}.
	
	(iii) The uniform integrability is derived by a direct application of Lemma \ref{lemma-UI}.
	
	(iv) For each $p\geq 1$ and $0\leq t\leq T$, by the definition of $\Phi$ and Lemma \ref{lemma-vanishing limit(1)}, we have
	$$\lim_{\ep\to 0}\|\ye_3(t)\|_p=0.$$
	In order to show $$\lim_{\ep\to0}\|\ye_4(t)\|_p\vee\Big\|\intot \mathcal{E}^\ep_rdr\Big\|_p=0,$$ by Lemma \ref{lemma-vanishing limit(1)}, it suffices to check $\supe\sup\limits_{0\leq s<\infty}\|Q^{\ep,ij}_s\|_q\leq C_q$ for $q\geq 1.$ Since $(\we_s)_{0<\ep\leq 1}$ is bounded in $L^p$ for all $p\geq 1$, from the definition of $Q^{\ep,ij}$ we see that we only need to prove $(D^{\ep,j}_s)_{0<\ep\leq 1}$ is bounded in $L^p$, $p\geq 1.$ Indeed, 
	$$D^{\ep,j}_t=\frac{1}{\ep}\int_t^\infty\E(\eta^{j}(\frac{s}{\ep})|\F^{t/\ep})ds=\int_{t/\ep}^\infty\E(\eta^j(u)|\F^{t/\ep})du.$$
Choosing $k=2p$ in Lemma \ref{lemma-vanishing limit(2)},
\begin{eqnarray*}
	\|\E(\eta^j(u)|\F^{t/\ep})\|_p\lesssim m(u-\frac{t}{\ep})^{1/2p}\|\eta^j(u)\|_{2p}\lesssim m(u-\frac{t}{\ep})^{1/2p}.
\end{eqnarray*}
implying that
\begin{equation}\label{esforD}
	\|D^{\ep,j}_t\|_p\lesssim\int_{t/\ep}^\infty m(u-\frac{t}{\ep})^{\frac{1}{2p}}du=\int_0^\infty m(v)^{\frac{1}{2p}}dv\leq C_p.
\end{equation}
 This finishes the proof.
\end{proof}
A direct application of Proposition \ref{prop-kurtz} yields the following corollary.
\begin{coro}\label{coro-martingale} For each $0<\ep\leq 1,$
	$$\mathcal{M}^\ep(t):=\Big[\ye_3(t)-\ye_3(0)-\intot\ze_3(s)ds\Big]+\Big[\ye_4(t)-\ye_4(0)-\intot\ze_4(s)ds\Big]$$
	is a $\bbG^\ep$-martingale.
\end{coro}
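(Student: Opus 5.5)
The plan is to apply Proposition \ref{prop-kurtz} separately to the pairs $(Y,Z)=(\ye_3,\ze_3)$ and $(Y,Z)=(\ye_4,\ze_4)$ on the filtered space with filtration $\bbG^\ep=(\F^{t/\ep})_t$. This gives two $\bbG^\ep$-martingales, and $\mathcal{M}^\ep$ is their sum. A sum of martingales with respect to the same filtration is again a martingale, so the corollary follows once each application is justified.

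For each $k=3,4$ I will verify the three hypotheses of Proposition \ref{prop-kurtz}.

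\emph{Membership in $\mathcal{L}$.} Both $\ye_k$ and $\ze_k$ must be progressive and satisfy $\sup_{t\le T}\E|\cdot|<\infty$. Adaptedness is immediate: $\ye_k(t)$ is a conditional expectation given $\F^{t/\ep}$. The formulas for $\ze_k(t)$ in Proposition \ref{prop-prepareformartingale}(i)--(ii) involve only $\xe_t,\bbxe_t,\we_t,\ee_t,D^\ep_t,Q^\ep_t$, and all of these are $\F^{t/\ep}$-measurable. Continuity in $t$ (or at least measurability in $(t,\omega)$) then gives progressivity. The $L^1$ bounds at fixed $\ep$ come from the moment bounds already used:
\begin{itemize}
\item Proposition \ref{prop-momentestimateforv} controls $\we$;
\item Proposition \ref{prop-momentestimate} controls $\xe$ and $\bbxe$;
\item \eqref{esforD} controls $D^\ep$, and hence $Q^\ep$;
\item the computation in Lemma \ref{lemma-vanishing limit(1)} shows $\ye_k(t)$ is in $L^p$ uniformly in $t$;
\item boundedness of $\Phi,\Theta,\Xi$ in their first two arguments, since $\vp\in C_c^\infty$.
\end{itemize}

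\emph{Uniform integrability.} The family $\{h^{-1}\E[\ye_k(t+h)-\ye_k(t)\mid\F^{t/\ep}]\}$ is uniformly integrable by Proposition \ref{prop-prepareformartingale}(iii).

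\emph{Convergence of difference quotients.} Convergence in probability of $h^{-1}\E[\ye_k(t+h)-\ye_k(t)\mid\F^{t/\ep}]$ to $\ze_k(t)$ for a.e.\ $t$ is the content of Proposition \ref{prop-prepareformartingale}(i)--(ii). Those parts rest on Lemma \ref{lemma-finding Z(1)}, so one should confirm that its hypotheses hold. Take $\pe(s,t)=\epi^{1/2}\Phi(\xe_t,\bbxe_t,\ee_s)$, respectively $\pe(s,t)=\sum_{i,j}\Theta^{ij}(\xe_t,\bbxe_t)Q^{\ep,ij}_s$. Each is measurable with respect to $\F^{(s\vee t)/\ep}$. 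The bound \eqref{assumptionforlemma1} on $\E|\partial_t\pe|$ follows from Lemma \ref{lemma-derivative2} together with the moment bounds at fixed $\ep$.

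The only delicate point is this last verification for $k=4$, namely that $\E|\partial_t\pe(s,t)|$ is bounded uniformly in $s\ge0$. The derivative involves $\Xi^{ij}(\xe_t,\bbxe_t,\ve_t)Q^{\ep,ij}_s$. Hölder's inequality reduces this to $\sup_t\|\ve_t\|_2$, which is finite by Proposition \ref{prop-momentestimateforv}, and $\sup_s\|Q^{\ep}_s\|_2$, which is finite by \eqref{esforD}. With these checks in place, Proposition \ref{prop-kurtz} applies to each $k$, and summing the two martingales proves the corollary.
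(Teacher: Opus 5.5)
Your proposal is correct and is the paper's own argument. The paper likewise obtains the corollary by applying Proposition \ref{prop-kurtz} to the pairs $(\ye_3,\ze_3)$ and $(\ye_4,\ze_4)$, takes the difference-quotient limits and the uniform integrability from Proposition \ref{prop-prepareformartingale}(i)--(iii), and sums the two martingales. Your extra checks (adaptedness, $L^1$ bounds, the bound \eqref{assumptionforlemma1}) only spell out what the paper leaves implicit. One correction to your emphasis for $k=4$: the bound on $\E|\partial_t\pe(s,t)|$ is routine, whereas the substantive input is the decay in $s$ of $\E(Q^{\ep,ij}_s\mid\F^{t/\ep})$ that makes the integrals $\int_t^\infty$ in $\ye_4$ and $\mathcal{E}^\ep_t$ finite, and you, like the paper, take this from Proposition \ref{prop-prepareformartingale} and Lemma \ref{lemma-vanishing limit(1)}.
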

The following lemma is crucial when identifying the limit of $\mathcal{M}^\ep.$
\begin{lemma}\label{lemma-bargamma}
Let $\Gamma, \Delta$ and  $\bar{\Gamma}$ be defined as in Theorem \ref{main2}. We have
	$$\lim\limits_{\ep\to 0}|\E(W^{\ep,i}_sW^{\ep,j}_s+W^{\ep,i}_sD^{\ep,j}_s)-\bar{\Gamma}^{ij}|=0.$$
\end{lemma}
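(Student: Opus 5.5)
We need to compute $\E(W^{\ep,i}_sW^{\ep,j}_s)$ and $\E(W^{\ep,i}_sD^{\ep,j}_s)$ using the decomposition \eqref{decompowe}, $W^\ep_s=\zeta^\ep_s+R^\ep_s$ where $\zeta^\ep_s=\int_0^{s/\ep}e^{-u}\eta(\frac{s}{\ep}-u)du$ (the term called $\ze_s$ in Proposition \ref{prop-momentestimateforv}) and $\|R^\ep_s\|_\infty=\mathcal{O}(\ep^{1/2})$. Since $\|\zeta^\ep_s\|_2$ and $\|D^{\ep,j}_s\|_2$ are bounded uniformly (by \eqref{esforze} and \eqref{esforD}), Cauchy--Schwarz shows that the contributions of $R^\ep$ to both expectations are $\mathcal{O}(\ep^{1/2})$. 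It therefore suffices to treat $\E(\zeta^{\ep,i}_s\zeta^{\ep,j}_s)$ and $\E(\zeta^{\ep,i}_sD^{\ep,j}_s)$, for fixed $s>0$ (which is what is used later, for a.e.\ $s$).

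\textbf{First term.} By Fubini and stationarity,
\[
\E(\zeta^{\ep,i}_s\zeta^{\ep,j}_s)=\int_0^{s/\ep}\!\!\int_0^{s/\ep}e^{-u}e^{-v}\,\E[\eta^i(\tfrac{s}{\ep}-u)\eta^j(\tfrac{s}{\ep}-v)]\,du\,dv.
\]
Stationarity gives $\E[\eta^i(a)\eta^j(b)]=R^{ij}(b-a)$ with the convention $R(t)=\E\eta(0)\otimes\eta(t)$ for all $t\in\mathbb{R}$. Hence the integrand is $e^{-u}e^{-v}R^{ij}(u-v)$, and this matches the sign convention in $\Gamma$ with $b-a=u-v$. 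Since $|R^{ij}|\le C$ by ($\mathbf{A}_1$), dominated convergence as $s/\ep\to\infty$ yields the limit $\Gamma^{ij}$.

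\textbf{Second term.} Write $D^{\ep,j}_s=\int_{s/\ep}^\infty\E(\eta^j(w)|\F^{s/\ep})dw$. Because $\zeta^{\ep,i}_s$ is $\F^{s/\ep}$-measurable, the tower property, with Fubini justified by Lemma \ref{lemma-vanishing limit(2)} as in \eqref{esforD}, gives
\[
\E(\zeta^{\ep,i}_sD^{\ep,j}_s)=\int_{s/\ep}^\infty\E[\zeta^{\ep,i}_s\eta^j(w)]\,dw=\int_0^\infty\!\!\int_0^{s/\ep}e^{-u}R^{ij}(u+v)\,du\,dv,
\]
using $w=\frac{s}{\ep}+v$ and $\E[\eta^i(\frac s\ep-u)\eta^j(\frac s\ep+v)]=R^{ij}(u+v)$. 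The Davydov bound $|R^{ij}(t)|\lesssim m(|t|)^{1/4}$, established in Section \ref{section-preli}, together with ($\mathbf{A}_2$), makes $e^{-u}|R^{ij}(u+v)|$ integrable on $[0,\infty)^2$. Dominated convergence then gives the limit $\Delta^{ij}$.

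\textbf{Main obstacle.} The delicate step is interchanging the conditional expectation in $D^\ep$ with the outer expectation and the infinite $w$-integral. This requires absolute integrability of $\|\E(\eta^j(w)|\F^{s/\ep})\|_2$ in $w$, which follows from Lemma \ref{lemma-vanishing limit(2)} and exponential mixing. A second point to check carefully is that the orientation conventions for $R^{ij}(u-v)$ and $R^{ij}(u+v)$ match the definitions of $\Gamma$ and $\Delta$. Summing the two limits gives $\bar\Gamma^{ij}=\Gamma^{ij}+\Delta^{ij}$.
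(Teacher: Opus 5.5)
Your proposal is correct and follows essentially the same route as the paper: split $W^\varepsilon_s=\zeta^\varepsilon_s+R^\varepsilon_s$, then discard the $R^\varepsilon$ contributions using $\|R^\varepsilon_s\|_\infty=\mathcal{O}(\varepsilon^{1/2})$ together with the uniform moment bounds \eqref{esforze} and \eqref{esforD}, then compute $\E(\zeta^{\varepsilon,i}_s\zeta^{\varepsilon,j}_s)$ and $\E(\zeta^{\varepsilon,i}_sD^{\varepsilon,j}_s)$ explicitly by stationarity and the tower property, and let $s/\varepsilon\to\infty$. The differences are minor refinements: you use dominated convergence (with $|R^{ij}|\le C$ for the first term) instead of the paper's tail bound, you restrict explicitly to $s>0$ (since $W^\varepsilon_0=0$), and you target convergence of the expectations, which is what the paper's computation actually establishes even though it states the reduction as $\E|W^{\varepsilon,i}_sW^{\varepsilon,j}_s-\Gamma^{ij}|\to0$.
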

\begin{proof}
	It suffices to show
	\begin{equation}\label{noname10}
		\lim\limits_{\ep\to0}\E|W^{\ep,i}_sW^{\ep,j}_s-\Gamma^{ij}|=0,
	\end{equation}
	and
	\begin{equation}\label{noname11}
		\lim\limits_{\ep\to0}\E|W^{\ep,i}_sD^{\ep,j}_s-\Delta^{ij}|=0,
	\end{equation}
Recall from \eqref{decompowe} that $\we=\ze+R^\ep$.
Note that
\begin{eqnarray*}
	&&\E Z^{\ep,i}_sZ^{\ep,j}_s\\&=&\frac{1}{\ep^2}\intos\intos e^{-\epi(s-r)}e^{-\epi(s-q)}\E(\eta^i(r/\ep)\eta^j(q/\ep))drdq\\&=&\int_0^{s/\ep}\int_0^{s/\ep}e^{-u}e^{-v}\E[\eta^i(\frac{s}{\ep}-u)\eta^j(\frac{s}{\ep}-v)]dudv\\&=&\int_0^{s/\ep}\int_0^{s/\ep}e^{-u}e^{-v} R^{ij}(u-v)dudv,
\end{eqnarray*}
from which
\begin{eqnarray}\label{noname12}
	&&|\E Z^{\ep,i}_sZ^{\ep,j}_s-\Gamma^{ij}|\nonumber\\&=&|\int_0^\infty\int_{s/\ep}^\infty e^{-u}e^{-v} R^{ij}(u-v)dudv+\int_{s/\ep}^\infty\int_0^{s/\ep}e^{-u}e^{-v} R^{ij}(u-v)dudv|\nonumber\\&\lesssim&\int_{s/\ep}^\infty e^{-v}[\int_{-\infty}^\infty|R^{ij}(t)|dt]dv\to 0.
\end{eqnarray}
By H\"older inequality and \eqref{esforze}--\eqref{esforre},
\begin{equation}\label{noname13}
	\E|Z^{\ep,i}_sR^{\ep,j}_s|\vee \E|R^{\ep,i}_sZ^{\ep,j}_s|\vee \E|R^{\ep,i}_sR^{\ep,j}_s|\to 0.
\end{equation}
Combining \eqref{decompowe},\eqref{noname12} and \eqref{noname13}, \eqref{noname10} is obtained. 

We turn to show \eqref{noname11}. Since $$D^{\ep,j}_t=\frac{1}{\ep}\int_t^\infty\E(\eta^{j}(s/\ep)|\F^{t/\ep})ds=\int_0^\infty\E(\eta^j(u+\frac{t}{\ep})|\F^{t/\ep})du,$$ by Fubini theorem and total expectation formula,
\begin{eqnarray*}
&&\E Z^{\ep,i}_sD^{\ep,j}_s\\&=&\epi\E[\intos e^{-\epi(s-r)}\eta^i(\frac{r}{\ep})dr\int_0^\infty\E(\eta^j(u+\frac{s}{\ep})|\F^{s/\ep})du]\\&=&\epi\E\intos\int_0^\infty e^{-\epi(s-r)}\eta^i(\frac{r}{\ep})\E(\eta^j(u+\frac{s}{\ep})|\F^{s/\ep})dudr\\&=&\epi\E\intos\int_0^\infty e^{-\epi(s-r)}\E(\eta^i(\frac{r}{\ep})\eta^j(u+\frac{s}{\ep})|\F^{s/\ep})dudr\\&=&\epi\intos\int_0^\infty e^{-\epi(s-r)}\E[\eta^i(\frac{r}{\ep})\eta^j(u+\frac{s}{\ep})]dudr\\&=&\epi\intos\int_0^\infty e^{-\epi(s-r)}R^{ij}(u+\frac{s-r}{\ep})dudr\\&=&\int_0^{s/\ep}\int_0^\infty e^{-v}R^{ij}(u+v)dudv,
\end{eqnarray*}
which implies
\begin{eqnarray}\label{noname14}
	&&|\E Z^{\ep,i}_sD^{\ep,j}_s-\Delta^{ij}|\nonumber\\&=&|\int_{s/\ep}^\infty\int_0^\infty e^{-v}R^{ij}(u+v)dudv|\nonumber\\&\leq&\int_{s/\ep}^\infty e^{-v}\int_0^\infty |R^{ij}(u+v)|dudv\nonumber\\&\leq&\int_{s/\ep}^\infty e^{-v}[\int_{-\infty}^\infty|R^{ij}(t)|dt]dv\to 0.
\end{eqnarray}
By \eqref{esforD}, \eqref{esforre} and H\"older inequality,
\begin{equation}\label{noname15}
	\E|R^{\ep,i}_sD^{\ep,j}_s|\to 0.
\end{equation}
The desired equality \eqref{noname11} is proved by combining \eqref{decompowe}, \eqref{noname14} and \eqref{noname15}, from which the proof is finished.

\end{proof}

\noindent\emph{Proof of Theorem \ref{main2}:}

Set $\we:=\sqrt{\ep}\ve$. From (\ref{SKxv}), $$\dot{W}^\ep=-\frac{1}{\ep}\we+\frac{1}{\sqrt{\ep}}F(\xe)+\frac{1}{\sqrt{\ep}}\dot{U}^\ep=-\frac{1}{\ep}\we+\frac{1}{\sqrt{\ep}}F(\xe)+\frac{1}{\ep}\ee,$$ that is,
\begin{equation}\label{noname6}
	\we=\ee+\sqrt{\ep}F(\xe)-\ep\dot{W}^\ep.
\end{equation}
 Since $\Phi$ is linear with respect to its third argument, by Lemma \ref{lemma-derivative2} and \eqref{noname6},
 \begin{eqnarray}\label{noname7}
 	&&\frac{d}{dt}\vp(\xe_t,\bbxe_t)\nonumber\\&=&\Phi(\xe_t,\bbxe_t,\ve_t)\nonumber\\&=&\frac{1}{\sqrt{\ep}}\Phi(\xe_t,\bbxe_t,\we_t)\nonumber\\&=&\frac{1}{\sqrt{\ep}}\Phi(\bfxe_t,\ee_t)+\Phi(\bfxe_t,F(\xe_t))-\sqrt{\ep}\Phi(\bfxe_t,\dot{W}^\ep_t).
 \end{eqnarray}
 Notice that 
 \begin{eqnarray*}
   &&\frac{d}{dt}\Phi(\xe_t,\bbxe_t,\we_t)\\&=&[\frac{d}{dt}\Phi(\xe_t,\bbxe_t,y)]|_{y=\we_t}+\sum_k\partial_{y_k}\Phi(\xe_t,\bbxe_t,\we_t)\dot{W}^{\ep,k}_t,
 \end{eqnarray*}
and $$\partial_{y_k}\Phi(x,X,y)=\partial_{x_k}\vp(x,X)+\sum_i\partial_{X_{ik}}\vp(x,X)x^i.$$
Combining two equalities above and applying Lemma \ref{lemma-derivative2},
\begin{eqnarray}
	&&\frac{d}{dt}\Phi(\xe_t,\bbxe_t,\we_t)\nonumber\\&=&\sum_{i,j}\Theta^{ij}(\bfxe_t)V^{\ep,i}_tW^{\ep,j}_t+(\sum_k\partial_{x_k}\vp(\bfxe_t)+\sum_{i,k}\partial_{X_{ik}}\vp(\bfxe_t)X^{\ep,i}_t)\dot{W}^{\ep,k}_t\nonumber\\&=&\sum_{i,j}\Theta^{ij}(\bfxe_t)V^{\ep,i}_tW^{\ep,j}_t+\Phi(\bfxe_t,\dot{W}^\ep_t).
\end{eqnarray}
Equivalently,
\begin{eqnarray}\label{noname8}
	\sqrt{\ep}\Phi(\bfxe_t,\dot{W}^\ep_t)=\sqrt{\ep}\frac{d}{dt}\Phi(\bfxe_t,\we_t)-\sqrt{\ep}\sum_{i,j}\Theta^{ij}(\bfxe_t)V^{\ep,i}_tW^{\ep,j}_t.
\end{eqnarray}
Combining \eqref{noname7} and \eqref{noname8},
\begin{eqnarray*}
	&&\frac{d}{dt}\vp(\bfxe_t)\nonumber\\&=&\frac{1}{\sqrt{\ep}}\Phi(\bfxe_t,\ee_t)+\Phi(\bfxe_t,F(\xe_t))+\sqrt{\ep}\sum_{i,j}\Theta^{ij}(\bfxe_t)V^{\ep,i}_tW^{\ep,j}_t-\sqrt{\ep}\frac{d}{dt}\Phi(\bfxe_t,\we_t).
	\end{eqnarray*}
	Integrating and recalling that $\we=\sqrt{\ep}\ve$,
\begin{eqnarray}\label{noname9}
	&&\vp(\bfxe_t)-\vp(0)+\sqrt{\ep}\Phi(\bfxe_t,\we_t)\nonumber\\&=&\intot \Phi(\bfxe_s,F(\xe_s))ds+\sum_{i,j}\intot \Theta^{ij}(\bfxe_s)W^{\ep,i}_sW^{\ep,j}_s ds+\frac{1}{\sqrt{\ep}}\intot \Phi(\bfxe_s,\ee_s)ds.
\end{eqnarray}
Recall that $Q^{\ep,ij}_s=W^{\ep,i}_sW^{\ep,j}_s+W^{\ep,i}_sD^{\ep,j}_s-\E(W^{\ep,i}_sW^{\ep,j}_s+W^{\ep,i}_sD^{\ep,j}_s)$. By Proposition \ref{prop-prepareformartingale},
\begin{eqnarray*}
	&&\frac{1}{\sqrt{\ep}}\Phi(\bfxe_s,\ee_s)+\sum_{i,j}\Theta^{ij}(\bfxe_s)W^{\ep,i}_sW^{\ep,j}_s+\ze_3(s)+\ze_4(s)\nonumber\\&=&\frac{1}{\sqrt{\ep}}\Phi(\bfxe_s,\ee_s)+\sum_{i,j}\Theta^{ij}(\bfxe_s)W^{\ep,i}_sW^{\ep,j}_s\\&&+(-\frac{1}{\sqrt{\ep}}\Phi(\bfxe_s,\ee_s)+\sum_{i,j}\Theta^{ij}(\bfxe_s)W^{\ep,i}_sD^{\ep,j}_s)+(\mathcal{E}^\ep_s-\sum_{i,j}\Theta^{ij}(\bfxe_s)Q^{\ep,ij}_s)\nonumber\\&=&\sum_{i,j}\Theta^{ij}(\bfxe_s)\E(W^{\ep,i}_sW^{\ep,j}_s+W^{\ep,i}_sD^{\ep,j}_s)+\mathcal{E}^\ep_s.
\end{eqnarray*}
Set $H^{\ep,ij}_s:=\E(W^{\ep,i}_sW^{\ep,j}_s+W^{\ep,i}_sD^{\ep,j}_s)$. By \eqref{noname9},
\begin{eqnarray*}
	&&\intot\ze_3(s)+\ze_4(s)ds\nonumber\\&=&\intot\sum_{i,j}\Theta^{ij}(\bfxe_s)H^{\ep,ij}_sds+\intot \mathcal{E}^\ep_sds-\intot\frac{1}{\sqrt{\ep}}\Phi(\bfxe_s,\ee_s)ds-\intot\sum_{i,j}\Theta^{ij}(\bfxe_s)W^{\ep,i}_sW^{\ep,j}_sds\nonumber\\&=&\intot\sum_{i,j}\Theta^{ij}(\bfxe_s)H^{\ep,ij}_sds+\intot \mathcal{E}^\ep_sds-\intot\frac{1}{\sqrt{\ep}}\Phi(\bfxe_s,\ee_s)ds\nonumber\\&&-[\vp(\bfxe_t)-\vp(0)+\sqrt{\ep}\Phi(\bfxe_t,\we_t)-\intot\Phi(\bfxe_s,F(\xe_s))ds-\frac{1}{\sqrt{\ep}}\intot\Phi(\bfxe_s,\ee_s)ds]\nonumber\\&=&\intot\sum_{i,j}\Theta^{ij}(\bfxe_s)H^{\ep,ij}_sds+\intot \mathcal{E}^\ep_sds-\vp(\bfxe_t)+\vp(0)-\sqrt{\ep}\Phi(\bfxe_t,\we_t)+\intot\Phi(\bfxe_s,F(\xe_s))ds.
\end{eqnarray*}
Therefore, the martingale $\mathcal{M}^\ep$ defined in Corollary \ref{coro-martingale} can be expressed as
\begin{eqnarray}
	&&\mathcal{M}^\ep(t)\nonumber\\&=&\ye_3(t)-\ye_3(0)+\ye_4(t)-\ye_4(0)-\intot\ze_3(s)+\ze_4(s)ds\nonumber\\&=&\ye_3(t)-\ye_3(0)+\ye_4(t)-\ye_4(0)+\vp(\bfxe_t)-\vp(0)+\sqrt{\ep}\Phi(\bfxe_t,\we_t)\nonumber\\&&-\intot\sum_{i,j}\Theta^{ij}(\bfxe_s)H^{\ep,ij}_sds-\intot\Phi(\bfxe_s,F(\xe_s))ds-\intot \mathcal{E}^\ep_sds.
\end{eqnarray}
Letting $\ep\to 0$ and passing to a subsequence, by Proposition \ref{prop-prepareformartingale} and Lemma \ref{lemma-bargamma}, 
\begin{equation}
	\mathcal{M}(t)=\vp(\mathbf{X}_t)-\vp(0)-\intot\sum_{i,j}\Theta^{ij}(\mathbf{X}_s)\bar{\Gamma}^{ij}ds-\intot\Phi(\mathbf{X}_s,F(X_s))ds.
\end{equation}
By similar argument in the proof Theorem \ref{main1}, $\mathcal{M}$ is a $\bar{\F}^\mathbf{X}$-martingale . The proof is finished by applying martingale representation theorem. \qed

\vspace{\baselineskip}
\noindent\emph{Proof of Theorem \ref{main3}:} 

By Theorem \ref{main2} and a probabilistic version of universal limit theorem \cite[Thm 6.1]{FZ18}, $\ze\to Z$ in $\mathcal{C}^\al$ weakly with $Z$ solving the RDE
\begin{equation}\label{RDE}
	dZ_t=b(Z_t)dt+\sigma(Z_t)d\mathbf{X}_t,\quad Z_0=\xi.
\end{equation}
Now we transfer the RDE \eqref{RDE} to SDE. By the definition of RDE, 
\begin{equation}
	Z_t-Z_s=b(Z_s)(t-s)+\sigma(Z_s)X_{s,t}+D\sigma(Z_s)\sigma(Z_s)\mathbb{X}_{s,t}+R_{s,t}, \quad 0\leq s\leq t\leq T,
\end{equation}
where the remainder $R$ satisfying $\lim\limits_{|\mathcal{P}|\to 0}\sup\limits_{\Pi\subset [0,T]}\sum\limits_{[s,t]\in\mathcal{P}}|R_{s,t}|=0.$
Writing this equation in its component,
\begin{equation}\label{componentRDE}
Z^i_t-Z^i_s=b^i(Z_s)(t-s)+\sum_{j}\sigma_{ij}(Z_s)X^j_{s,t}+\sum_{k,l,j}\partial_k\sigma_{ij}(Z_s)\sigma_{kl}(Z_s)\mathbb{X}^{lj}_{s,t}+R^i_{s,t}.
\end{equation}
By \eqref{limitofX},
\begin{eqnarray}\label{noname16}
	\mathbb{X}^{lj}_{s,t}&=&\int_s^t X^l_{s,r}\circ dX^j_r+\frac{\bar{\Gamma}_{lj}-\bar{\Gamma}_{jl}}{2}(t-s)\nonumber\\&=&\int_s^t X^l_{s,r}dX^j_r+\frac{1}{2}[X^l_{s,\cdot},X^j]_{s,t}+\frac{\bar{\Gamma}_{lj}-\bar{\Gamma}_{jl}}{2}(t-s)\nonumber\\&=&\int_s^t X^l_{s,r}dX^j_r+\frac{1}{2}[\sum_k S_{lk}^{1/2}B^k_{s,\cdot},\sum_{k=1}^d S_{jk}^{1/2}B^k_\cdot]_{s,t}+\frac{\bar{\Gamma}_{lj}-\bar{\Gamma}_{jl}}{2}(t-s)\nonumber\\&=&\int_s^t X^l_{s,r}dX^j_r+\frac{1}{2}\sum_k S^{1/2}_{lk}S^{1/2}_{jk}(t-s)+\frac{\bar{\Gamma}_{lj}-\bar{\Gamma}_{jl}}{2}(t-s).
\end{eqnarray}
Define matrix $G$ by
\begin{equation}\label{defofG}
	G^{lj}:=\frac{1}{2}(\sum_k S^{1/2}_{lk}S^{1/2}_{jk}+\bar{\Gamma}_{lj}-\bar{\Gamma}_{jl}),
\end{equation}
then \eqref{noname16} is written as
\begin{equation}\label{noname17}
	\mathbb{X}^{lj}_{s,t}=\int_s^t X^l_{s,r}dX^j_r+G^{lj}(t-s)=\mathbb{X}^{\text{It\^o},lj}_{s,t}+G^{lj}(t-s).
\end{equation}
Combining \eqref{componentRDE} and \eqref{noname17},
\begin{eqnarray}\label{noname19}
	&&Z^i_t-Z^i_s\nonumber\\&=&b^i(Z_s)(t-s)+\sum_{j}\sigma_{ij}(Z_s)X^j_{s,t}+\sum_{k,l,j}\partial_k\sigma_{ij}(Z_s)\sigma_{kl}(Z_s)\mathbb{X}^{\text{It\^o},lj}_{s,t}\nonumber\\&&+\sum_{k,l,j}\partial_k\sigma_{ij}(Z_s)\sigma_{kl}(Z_s)G^{lj}(t-s)+R^i_{s,t}\nonumber\\&=&[b^i(Z_s)+\sum_{k,l,j}\partial_k\sigma_{ij}(Z_s)\sigma_{kl}(Z_s)G^{lj}](t-s)+\sum_{j}\sigma_{ij}(Z_s)X^j_{s,t}\nonumber\\&&+\sum_{k,l,j}\partial_k\sigma_{ij}(Z_s)\sigma_{kl}(Z_s)\mathbb{X}^{\text{It\^o},lj}_{s,t}+R^i_{s,t}.
\end{eqnarray}
By equivalence theorem of SDE and RDE \cite[Prop 6.9]{FZ18}, the solution of \eqref{noname19} is indistinguishable to the SDE
\begin{equation}
	dZ^i_t=[b^i(Z_t)+\sum_{k,l,j}\partial_k\sigma_{ij}(Z_t)\sigma_{kl}(Z_t)G^{lj}]dt+\sum_j\sigma_{ij}(Z_t)dX^j_t.
\end{equation} 
But $$dX^j_t=F^j(X_t)dt+\sum_k S^{1/2}_{jk}dB^k_t,$$ substituting which into \eqref{noname19} yields
\begin{eqnarray*}
	dZ^i_t=[b^i(Z_t)+\sum_{k,l,j}\partial_k\sigma_{ij}(Z_t)\sigma_{kl}(Z_t)G^{lj}+\sum_j\sigma_{ij}(Z_t)F^j(X_t)]dt+\sum_{j,k} S^{1/2}_{jk}\sigma_{ij}(Z_t)dB^k_t.
\end{eqnarray*}\qed

	%\newpage
	%\bibliographystyle{abbrv}
	%\addcontentsline{toc}{section}{References}
	%\bibliography{bibli4}	
	
\end{document}